\documentclass[11pt]{amsart}

\usepackage{graphicx}%
\usepackage{multirow}%
\usepackage{amsmath,amssymb,amsfonts}%
\usepackage{amsthm}%
\usepackage{mathrsfs}%
\usepackage[title]{appendix}%
\usepackage{xcolor}%
\usepackage{textcomp}%
\usepackage{manyfoot}%
\usepackage{booktabs}%
\usepackage{algorithm}%
\usepackage{algorithmicx}%
\usepackage{algpseudocode}%
\usepackage{listings}%
\usepackage[colorlinks=true, linkcolor=red, urlcolor=blue, citecolor=blue]{hyperref}
\usepackage{cite}

\theoremstyle{definition}
\newtheorem{theorem}{Theorem}[section]

\newtheorem{lemma}[theorem]{Lemma}
\newtheorem{corollary}[theorem]{Corollary}
\theoremstyle{definition}
\newtheorem{definition}[theorem]{Definition}
\newtheorem{example}[theorem]{Example}
\theoremstyle{remark}
\newtheorem{remark}[theorem]{Remark}

\allowdisplaybreaks[1]
\usepackage[left=2.5cm, right=2.5cm, top=3cm, bottom=3cm]{geometry}

\title[ the algebraic Davis--Wielandt shell and norm-parallelism ]{On the geometry of the algebraic Davis--Wielandt shell and norm-parallelism in $C^*$-algebra}

\author[D. Bhattacharya, F. Kittaneh, A. Patra, S. Satpathy] { {Debarati Bhattacharya}$^{1}$, {Fuad Kittaneh}$^{2,3}$, {Arnab Patra}$^{4}$, {Sanchita Satpathy}$^{5}$}

\address{$^{[1]}$ Department of Mathematics, Indian Institute of Technology Bhilai, Chhattisgarh, India 491002}
        \email{\url{debaratib@iitbhilai.ac.in}}
        
\address{$^{[2]}$ Department of Mathematics, The University of Jordan, Amman, Jordan}
\email{\url{fkitt@ju.edu.jo}}

\address{$^{[3]}$ Department of Mathematics, Korea University, Seoul 02841, South Korea}
\email{\url{fkitt@ju.edu.jo}}
        
\address{$^{[4]}$ Department of Mathematics, Indian Institute of Technology Bhilai,  Chhattisgarh, India 491002.}
\email{\url{arnabp@iitbhilai.ac.in}}

\address{$^{[5]}$ Department of Mathematics, Indian Institute of Technology Bhilai,  Chhattisgarh, India 491002.}
\email{\url{sanchitasatpathy4@gmail.com}}

\subjclass[2020]{Primary 47A12, 46L05; Secondary 47A30.}
\keywords{Davis--Wielandt shell, Davis--Wielandt radius, $C^*$-algebra, Norm-parallelism}

\begin{document}




\begin{abstract}
    This article is devoted to the study of the Davis--Wielandt shell and the Davis--Wielandt radii of elements in a $C^*$-algebra. Utilizing a state-space approach, several geometric properties of the algebraic Davis--Wielandt shell are established. Upper and lower bounds for the algebraic Davis--Wielandt radii are obtained including the Davis--Wielandt radius of the sum of $k$ elements. We also explore the relationship between norm-parallelism and the Davis--Wielandt radii of elements.

\end{abstract}

\maketitle

\section{Introduction}\label{sec1}

The study of numerical ranges and their corresponding radii forms a cornerstone of modern operator theory and functional analysis. For a bounded linear operator $T$ acting on a complex Hilbert space $\mathscr{H}$, the classical spatial numerical range $W(T)$ and the numerical radius $w(T)$ provide invaluable insight into the spectral, geometric, and structural properties of $T$.
	
	 Over the decades, this framework has been elegantly extended to an abstract setting. For a unital $C^*$-algebra $\mathfrak{A}$ with the unit $\boldsymbol{1}$, the \textit{algebraic numerical range} and the \textit{algebraic numerical radius} of an element $a \in \mathfrak{A}$, are defined respectively as: 
	\[ W_\mathfrak{A}(a)=\{f(a):f\in\mathfrak{S}(\mathfrak{A})\}, \]
	\[w_\mathfrak{A}(a)=\sup\{|z|:z\in W_\mathfrak{A}(a)\},
	\] 
    where $\mathfrak{S}(\mathfrak{A})$ denotes the state space of $\mathfrak{A}$, defined as the set of all positive linear functionals $f \in \mathfrak{A}^*$ such that $\|f\| = f(\boldsymbol{1}) = 1$. It is a standard result in operator algebra theory that $\mathfrak{S}(\mathfrak{A})$ is a weak$^*$-compact convex subset of the dual space $\mathfrak{A}^*$ \cite[Proposition~13.8]{zhu2018introduction}.

	The algebraic numerical range of $a\in\mathfrak{A}$ is a nonempty compact convex subset of $\mathbb{C}$ (see \cite{gau2021numerical}). Considering $\mathscr{B}(\mathscr{H})$, the algebra of all bounded linear operators in $\mathscr{H}$, we have $W_{\mathfrak{A}}(T)=\overline{W(T)}$. The intrinsic connection between the algebraic numerical radius and the $C^*$-norm is governed by the classical inequality (see \cite{duncan1971numerical}) 
    \begin{eqnarray}\label{*norm}
	\dfrac{1}{2}\|a\|\leq w_\mathfrak{A}(a)\leq\|a\|,
	\end{eqnarray} cementing its role as a fundamental structural metric within the algebra.

   To capture higher-order geometric and norm-related information simultaneously, Wielandt and Davis (e.g.\cite{davis1968shell,wielandt1955eigenvalues}) introduced the Davis--Wielandt shell. For an operator $T\in\mathscr{B}(\mathscr{H})$ the spatial Davis--Wielandt shell $DW(T)\subset\mathbb{C}\times\mathbb{R}$ and its associated radius $dw(T)$ are defined by: 
	\[
		DW(T)=\left\{\left(\langle Tx,x\rangle,\langle T^*Tx,x\rangle\right):x\in\mathscr{H},\|x\|=1\right\}
	,\] 
	\[
		dw(T)=\sup\left\{\sqrt{|\langle Tx,x\rangle|^2+\|Tx\|^4}:x\in\mathscr{H},\|x\|=1\right\}.
	\]  
$DW(T)$ is a nonempty bounded subset of $W(T)\times[0,\|T\|^2]$ and compact if $\mathscr{H}$ is finite dimensional. The Davis--Wielandt radius, unlike numerical radius, fails to be a norm on $\mathscr{B}(\mathscr{H})$. It is immediate that the numerical range $W(T)$ is precisely the projection of the Davis--Wielandt shell $DW(T)$ onto its first coordinate. Therefore, the Davis--Wielandt shell encodes additional information about the operator $T$ beyond what is contained in the numerical range alone. Indeed, in finite dimensions, the geometric structure of $DW(T)$ completely determines whether the operator is normal. More precisely, for $A\in \mathfrak{M}_n(\mathbb{C})$, the algebra of $n\times n$ complex matrices, $A$ is normal if and only if the set $DW(A)$, regarded as a subset of $\mathbb{C}\times\mathbb{R}\simeq\mathbb{R}^3$, forms a polyhedron. The geometric and spectral properties of $DW(T)$ have been explored in (e.g. \cite{chien2002davis,li2008davis}). Numerous works on estimations of $dw(T)$ were investigated in (e.g. \cite{alomari2023davis,bhunia2021new,bhunia2021bounds,bhunia2023further,zamani2020some,zamani2019norm}).
	
    Motivated by the utility of this joint spatial range, Aramba{\v{s}}i{\'c} et al. successfully migrated these concepts to the abstract setting of $C^*$-algebras. The \textit{algebraic Davis--Wielandt shell} $DW_{\mathfrak{A}}(a)$ and the \textit{algebraic Davis--Wielandt radius} $dw_{\mathfrak{A}}(a)$ of  $a\in\mathfrak{A}$ of an element $a\in\mathfrak{A}$ are defined respectively as:
	\[
		DW_{\mathfrak{A}}(a)=\left\{(f(a), f(a^*a)):f\in\mathfrak{S}(\mathfrak{A})\right\}
	,\]
	\[
		dw_{\mathfrak{A}}(a)=\sup\left\{\sqrt{|f(a)|^2+f(a^*a)^2}:f\in\mathfrak{S}(\mathfrak{A})\right\}.
	\] In \cite{arambavsic2018roberts} it is identified that algebraic Davis--Wielandt shell of $a\in\mathfrak{A}$ is compact and convex subspace of $\mathbb{C}\times\mathbb{R}$. 

 This transition from the spatial to the algebraic setting highlights a profound geometric distinction regarding \emph{convexity}. It is well-known that the spatial shell $DW(T)$ is convex whenever dim$\mathscr{H}\geq3$, but it can fail to be convex in lower dimensions. For instance, consider the basic nilpotent matrix in the algebra of $2\times2$ complex matrices $\mathfrak{M}_2(\mathbb{C})$:
 \[A=\begin{pmatrix}
0 & 1\\
0 & 0
\end{pmatrix}.\] 
A direct calculation shows that:
\[DW(A)=\left\{(\mu,r): r\in[0,1],\ |\mu|=\sqrt{r(1-r)}\right\}.\] 
Thus, if we identify \(\mathbb{C}\times\mathbb{R}\) with \(\mathbb{R}^3\), the spatial Davis--Wielandt shell is the sphere of radius \(1/2\) centered at
$\left(0,0,\dfrac{1}{2}\right)$ which is non-convex.

To resolve these low-dimensional geometric irregularities, we turn to the algebraic Davis--Wielandt shell, $DW_{\mathfrak{A}}(T)$, which provides a more topologically uniform framework for analysis. For an operator $T \in \mathfrak{A} \subseteq \mathscr{B}(\mathscr{H})$, the set of all states of the $C^*$-algebra $\mathfrak{A}$ is the weak$^*$-closed convex hull of its vector states $T \mapsto \langle Tx, x \rangle$ for unit vectors $x \in \mathscr{H}$. Since every vector state is also a state of $\mathfrak{A}$, it follows that the spatial shell is contained within the algebraic shell, giving the inclusion $\overline{DW(T)^\wedge} \subseteq DW_{\mathfrak{A}}(T)$. Conversely, since $DW_{\mathfrak{A}}(T)$ is both convex and compact, it follows $DW_{\mathfrak{A}}(T) \subseteq \overline{DW(T)^\wedge}$. Combining these conditions gives the identity $DW_{\mathfrak{A}}(T) = \overline{DW(T)^\wedge}$. 

Returning to the $2 \times 2$ nilpotent matrix under this algebraic lens, the shell successfully incorporates the convex hull
\begin{equation}
DW_{\mathfrak{A}}(A) = \overline{DW(A)^\wedge} = \left\{ (\mu, r) : r \in [0, 1], \, |\mu| \leq \sqrt{r(1-r)} \right\}.
\end{equation}
By capturing all interior points under the boundary curve, $DW_{\mathfrak{A}}(A)$ forms a solid convex ball. This contrast clearly illustrates that the algebraic shell provides a more topologically complete and regular framework for matrix and operator analysis than its spatial counterpart.

     The primary objective of this article is to systematically investigate geometric features of the algebraic Davis--Wielandt shell and unveil its deep structural connections to norm-parallelism. We also aim to establish several novel estimations for the algebraic Davis--Wielandt radius. The paper is organized as follows. In Section~\ref{sec2}, we study several geometric properties of the algebraic Davis--Wielandt shell and establish some basic results. Section~\ref{sec3} is devoted to deriving new upper and lower bounds for the algebraic Davis--Wielandt radius, extending the inequality frameworks to the sum of $k$ elements. In Section~\ref{sec4}, we explore the structural interplay between norm-parallelism and the Davis--Wielandt radii of elements. Finally, Section~\ref{sec5} presents concluding remarks.

\section{Geometric properties of the algebraic Davis--Wielandt shell}\label{sec2}

Before proceeding further, we present some lemmas that will be used throughout the paper.
\begin{lemma}\cite[Proposition~13.4]{zhu2018introduction}\label{cs inq}
	For every positive linear functional $f$ in a $C^*$-algebra $\mathfrak{A}$, we have
	\begin{eqnarray*}
		|f(a^*b)|^2\leq f(a^*a)f(b^*b),\qquad\ a, b\in\mathfrak{A}. 
	\end{eqnarray*}
	
\end{lemma}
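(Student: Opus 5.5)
The plan is to use the positivity of $f$ to build a positive semidefinite sesquilinear form on $\mathfrak{A}$, and then apply the standard Cauchy--Schwarz argument via a discriminant estimate. Define $\langle a,b\rangle_f := f(b^*a)$ for $a,b\in\mathfrak{A}$. This is linear in $a$ and conjugate-linear in $b$. It is positive semidefinite, since $\langle a,a\rangle_f=f(a^*a)\ge 0$ because $a^*a$ is a positive element and $f$ is positive.

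The first step is Hermitian symmetry, $f(x^*)=\overline{f(x)}$. Write $x=h+ik$ with $h=h^*$ and $k=k^*$. Each self-adjoint element is a difference of two positive elements, namely $h=h_+-h_-$, so $f(h)$ and $f(k)$ are real. Hence
\[
f(x^*)=f(h)-if(k)=\overline{f(x)}.
\]
In particular, $\langle b,a\rangle_f=\overline{\langle a,b\rangle_f}$.

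Next, fix $a,b$ and set $\alpha=f(a^*b)$. If $\alpha=0$ there is nothing to prove. Otherwise, for real $t$, put $\lambda=t\,\overline{\alpha}/|\alpha|$ and expand
\[
0\le f\bigl((\lambda a-b)^*(\lambda a-b)\bigr)=|\lambda|^2 f(a^*a)-\overline{\lambda}\,f(a^*b)-\lambda\,\overline{f(a^*b)}+f(b^*b).
\]
Using Hermitian symmetry and the choice of $\lambda$, this becomes
\[
t^2 f(a^*a)-2t|\alpha|+f(b^*b)\ge 0 \quad\text{for all } t\in\mathbb{R}.
\]

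Then split into cases according to $f(a^*a)$. If $f(a^*a)>0$, the quadratic in $t$ is nonnegative everywhere, so its discriminant is nonpositive: $4|\alpha|^2-4f(a^*a)f(b^*b)\le 0$, which is exactly the claim. If $f(a^*a)=0$, the expression is linear in $t$, namely $-2t|\alpha|+f(b^*b)\ge 0$ for all $t$. Letting $t\to\infty$ forces $|\alpha|=0$, and the inequality holds trivially.

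The main point needing care is the Hermitian property $f(x^*)=\overline{f(x)}$. It relies on the decomposition of self-adjoint elements into positive parts via continuous functional calculus, and it does not require boundedness of $f$. The remaining steps are routine algebra.
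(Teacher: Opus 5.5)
The paper gives no proof of this lemma; it just cites Zhu. Your strategy is the standard argument for it: the positive semidefinite form $(a,b)\mapsto f(b^*a)$, Hermitian symmetry from writing self-adjoint elements as differences of positive ones, and then a discriminant argument. Your treatment of Hermitian symmetry is fine, and so is the case split $f(a^*a)>0$ versus $f(a^*a)=0$.

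The problem is the step ``using Hermitian symmetry and the choice of $\lambda$, this becomes $t^2 f(a^*a)-2t|\alpha|+f(b^*b)$.'' It is false as written, because your phase is conjugated the wrong way. With $\lambda=t\overline{\alpha}/|\alpha|$ you have $\overline{\lambda}=t\alpha/|\alpha|$, so $\overline{\lambda}\alpha=t\alpha^2/|\alpha|$. The two cross terms then add up to $-2\Re(\overline{\lambda}\alpha)=-2t\,\Re(\alpha^2)/|\alpha|$, which equals $-2t|\alpha|$ only when $\alpha$ is real. For example, if $\alpha=e^{i\pi/4}|\alpha|$, then $\Re(\alpha^2)=0$ and the linear term disappears. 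You are left with $t^2f(a^*a)+f(b^*b)\ge 0$, from which the discriminant argument extracts nothing.

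The repair is a one-symbol change: take $\lambda=t\alpha/|\alpha|$. Then $\overline{\lambda}\alpha=\lambda\overline{\alpha}=t|\alpha|$, so the expansion really is $t^2f(a^*a)-2t|\alpha|+f(b^*b)\ge 0$ for all real $t$. With that change, the rest of your argument goes through unchanged, including both cases on $f(a^*a)$.
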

\begin{lemma}\cite[Corollary~2.6]{mahapatra2024upper}\label{nw lm1}
    Let $\mathfrak{A}$ be a unital $C^*$-algebra. If a is a self-adjoint element in $\mathfrak{A}$ such that the spectrum of $a$ is contained in $[0,\infty)$ and $f\in\mathfrak{S}(\mathfrak{A})$, then for any $r\geq1$,
\[(f(a))^r\leq f(a^r).\]
\end{lemma}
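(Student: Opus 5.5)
The plan is to reduce the statement to the scalar case through a representation of the state $f$. Since the claim is cited from \cite[Corollary~2.6]{mahapatra2024upper}, the aim is only to sketch why it holds.

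First, apply the GNS construction to $f$. This yields a cyclic representation $\pi:\mathfrak{A}\to\mathscr{B}(\mathscr{H}_f)$ and a unit vector $\xi$ with $f(x)=\langle \pi(x)\xi,\xi\rangle$ for all $x\in\mathfrak{A}$. The unit vector comes from $\|\xi\|^2=f(\boldsymbol{1})=1$.

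Next, pass to the spectral measure. Because $a$ is self-adjoint with $\sigma(a)\subseteq[0,\infty)$, the operator $T=\pi(a)$ is positive and $\sigma(T)\subseteq\sigma(a)\subseteq[0,\|a\|]$. The element $a^r$ is defined for real $r\geq1$ by continuous functional calculus, and $\pi(a^r)=T^r$ because $\pi$ commutes with continuous functional calculus. Let $E$ be the spectral measure of $T$ and set $\mu(\cdot)=\langle E(\cdot)\xi,\xi\rangle$. Then $\mu$ is a Borel probability measure on $[0,\|a\|]$. It gives
\[
f(a)=\int t\,d\mu(t),\qquad f(a^r)=\int t^r\,d\mu(t).
\]

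Finally, apply Jensen's inequality. The map $t\mapsto t^r$ is convex on $[0,\infty)$ for $r\geq1$, so
\[
\left(\int t\,d\mu\right)^r\leq\int t^r\,d\mu,
\]
which is exactly $(f(a))^r\leq f(a^r)$.

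A route avoiding measures is also available, using the commutative $C^*$-algebra $C^*(\boldsymbol{1},a)\cong C(\sigma(a))$. The restriction of $f$ to this subalgebra is a state, hence a probability measure on $\sigma(a)$ by the Riesz representation theorem, and Jensen applies in the same way. For rational $r$ one could alternatively iterate Lemma~\ref{cs inq}, but the Jensen route handles all real $r\geq1$ at once.

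The only delicate point is checking that the restricted state is really represented by a probability measure, and that $a^r$ corresponds to $t^r$ under the isomorphism. Both follow from standard continuous functional calculus, so I expect no real obstacle.
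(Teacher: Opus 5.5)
Your proof is correct. Note that the paper gives no proof of this lemma. It is imported from the cited reference (Corollary~2.6 there) and then used as a black box in Section~3, so there is no in-paper argument to compare against.

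What you supply is the standard self-contained proof. Through the GNS representation, the statement becomes McCarthy's inequality $\langle Tx,x\rangle^r\le\langle T^rx,x\rangle$ for a positive operator $T$ and a unit vector $x$. That inequality follows from the spectral measure $\mu=\langle E(\cdot)\xi,\xi\rangle$ together with Jensen's inequality for the convex function $t\mapsto t^r$ on $[0,\infty)$.

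Your second route is the more economical of the two. You restrict $f$ to $C^*(\boldsymbol{1},a)\cong C(\sigma(a))$ and represent it as integration against a probability measure by the Riesz theorem. This needs neither GNS nor the spectral theorem. It uses only that a positive unital functional on $C(\sigma(a))$ is a probability measure, and that continuous functional calculus sends $a^r$ to $t^r$ by definition.

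One small point is worth stating explicitly: $f(a)=\int t\,d\mu(t)\ge 0$, because $\mu$ is supported in $\sigma(a)\subseteq[0,\|a\|]$. This guarantees that $(f(a))^r$ is a well-defined nonnegative real number for non-integer $r$, which is what makes applying Jensen on $[0,\infty)$ legitimate.
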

	\noindent A representation of a $C^*$-algebra $\mathfrak{A}$ on a Hilbert space $\mathscr{H}$ is a $^*$-homomorphism $\pi:\mathfrak{A} \to \mathscr{B}(\mathscr{H})$, that is, $\pi$ satisfies $\pi(ab)=\pi(a)\pi(b)$ for all $a, b\in\mathfrak{A}$ and $\pi(a)^*=\pi(a^*)$ for all $a\in\mathfrak{A}$. In this regard, we state the famous Gelfand, Naimark, and Segal theorem.
	\begin{theorem}\label{gns}\cite[Theorem~1.6.3]{arveson1998invitation}
	Let $f\in\mathfrak{A}^*$ and $f\geq 0$. Then there is a representation $\pi$ of $\mathfrak{A}$ and a vector $\xi\in \mathscr{H}$ such that $f(a)=\langle\pi(a)\xi, \xi\rangle$ for every $a\in\mathfrak{A}$. Moreover, if $f\in \mathfrak{S}(\mathfrak{A})$, then $\|\xi\|=1$.
	\end{theorem}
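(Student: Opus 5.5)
The statement is the Gelfand--Naimark--Segal construction, so I would build the representation directly from the positive functional $f$. First I define a sesquilinear form on $\mathfrak{A}$ by $\langle a,b\rangle_f = f(b^*a)$. Positivity of $f$ makes this form positive semidefinite, and Lemma~\ref{cs inq} supplies the Cauchy--Schwarz inequality $|f(b^*a)|^2\le f(a^*a)f(b^*b)$. It follows that
\[
N_f=\{a\in\mathfrak{A}: f(a^*a)=0\}
\]
is a linear subspace, and that $f(b^*a)=0$ for all $b\in\mathfrak{A}$ whenever $a\in N_f$. Moreover, $N_f$ is a left ideal. To see this, note that for $c\in\mathfrak{A}$ and $a\in N_f$ we have $f((ca)^*(ca))=f((a^*c^*c)a)$, and the Cauchy--Schwarz bound forces this to be $0$. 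Hence the form descends to a genuine inner product on $\mathfrak{A}/N_f$, and I let $\mathscr{H}$ be the completion.

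Next I define $\pi(a)(b+N_f)=ab+N_f$. This is well defined because $N_f$ is a left ideal. The key estimate, and the main obstacle, is boundedness:
\[
\|\pi(a)(b+N_f)\|^2=f(b^*a^*ab)\le \|a\|^2 f(b^*b).
\]
I would prove it using $a^*a\le\|a\|^2\boldsymbol{1}$ (in the unitization if necessary), so that $b^*(\|a\|^2-a^*a)b\ge0$, and then apply positivity of $f$. Thus $\pi(a)$ extends to a bounded operator on $\mathscr{H}$. The identities $\pi(ab)=\pi(a)\pi(b)$ and $\langle \pi(a)x,y\rangle=\langle x,\pi(a^*)y\rangle$ are then routine on the dense subspace $\mathfrak{A}/N_f$, so $\pi$ is a $^*$-representation.

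For the vector, in the unital case I take $\xi=\boldsymbol{1}+N_f$, which gives $\langle\pi(a)\xi,\xi\rangle=f(\boldsymbol{1}^*a\boldsymbol{1})=f(a)$. If $\mathfrak{A}$ is not unital, I would use an approximate unit $(e_\lambda)$. Since $\sup_\lambda f(e_\lambda^2)\le\|f\|$, the net $e_\lambda+N_f$ is Cauchy, and I take $\xi$ to be its limit. The identity $f(a)=\lim_\lambda f(e_\lambda a e_\lambda)$ then gives $\langle\pi(a)\xi,\xi\rangle=f(a)$. This nonunital step is the most delicate point. Alternatively, one can extend $f$ to the unitization with the same norm and reduce to the unital case.

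Finally, suppose $f\in\mathfrak{S}(\mathfrak{A})$. Then $\|\xi\|^2=f(\boldsymbol{1})=1$ in the unital case, and in general $\|\xi\|^2=\lim_\lambda f(e_\lambda^2)=\|f\|=1$.
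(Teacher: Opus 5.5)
Your proposal is the standard GNS construction: $N_f$ is a left ideal by Cauchy--Schwarz, you complete $\mathfrak{A}/N_f$, bound left multiplication via $b^*(\|a\|^2\boldsymbol{1}-a^*a)b\ge 0$, and take $\xi=\boldsymbol{1}+N_f$. This is exactly the argument in the source the paper cites; the paper gives no proof of its own. The paper works only with unital $\mathfrak{A}$ and states with $f(\boldsymbol{1})=1$, so your unital argument covers everything needed, including $\|\xi\|^2=f(\boldsymbol{1})=1$.

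There is one gap in your non-unital aside. The bound $f(e_\lambda^2)\le\|f\|$ alone does not make $(e_\lambda+N_f)$ Cauchy. You need an increasing approximate unit, so that for $\lambda\le\mu$ you have $0\le e_\mu-e_\lambda\le\boldsymbol{1}$. This gives $\|(e_\mu-e_\lambda)+N_f\|^2=f((e_\mu-e_\lambda)^2)\le f(e_\mu)-f(e_\lambda)$, and you then use that the increasing bounded net $f(e_\lambda)$ converges. Likewise, $\lim_\lambda f(e_\lambda^2)=\|f\|$ needs the estimate $|f(e_\lambda a)|^2\le f(e_\lambda^2)\,\|f\|\,\|a\|^2$ before passing to the limit and taking the supremum over $\|a\|\le 1$. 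Your alternative, extending $f$ to the unitization with $\tilde f(\boldsymbol{1})=\|f\|$ (a standard fact), is the cleaner way to handle that case.
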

We now mention some basic results for $DW_{\mathfrak{A}}(a)$ as follows.
	\begin{theorem}
	Let $a\in \mathfrak{A}$. Then the following statements hold:
		\begin{enumerate}
            \item [(i)] If $(\mu,r)\in DW_{\mathfrak{A}}(a)$, then $r\geq |\mu|^2$, that is, $DW_{\mathfrak{A}}(a)$ lies in a paraboloid.
			\item [(ii)] $DW_{\mathfrak{A}}(a)=DW_{\mathfrak{A}}(u^*au)$ for any unitary $u\in \mathfrak{A}$.
			\item [(iii)] $DW_{\mathfrak{A}}(\alpha a+\beta\boldsymbol{1})=\{(\alpha\mu+\beta, |\alpha|^2r+2\Re(\alpha\overline{\beta}\mu)+|\beta|^2):(\mu, r)\in DW_{\mathfrak{A}}(a)\}$ for any scalers $\alpha$ and $\beta$.
            \item[(iv)] If $a=a_1\oplus\cdots\oplus a_n$, where $a\in\mathfrak{A}=\oplus_{i=1}^n\mathfrak{A}_i$ for each $a_i\in\mathfrak{A}_i$, then \[DW_{\mathfrak{A}}(a)=\left(DW_{\mathfrak{A}_1}(a_1)\bigcup\cdots\bigcup DW_{\mathfrak{A}_n}(a_n)\right)^\wedge,\] where $\left(DW_{\mathfrak{A}_1}(a_1)\bigcup\cdots\bigcup DW_{\mathfrak{A}_n}(a_n)\right)^\wedge$ is the convex hull of $DW_{\mathfrak{A}_1}(a_1)\bigcup\cdots\bigcup DW_{\mathfrak{A}_n}(a_n)$.
		\end{enumerate}
	\end{theorem}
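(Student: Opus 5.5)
We prove the four items separately, working directly from the definition of $DW_{\mathfrak{A}}(a)$ through states.

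For (i), take $(\mu,r)=(f(a),f(a^*a))$ with $f\in\mathfrak{S}(\mathfrak{A})$. Apply Lemma~\ref{cs inq} with the pair $(\boldsymbol{1},a)$ to get $|f(a)|^2=|f(\boldsymbol{1}^*a)|^2\le f(\boldsymbol{1})f(a^*a)=f(a^*a)$. Since $f(\boldsymbol{1})=1$, this is exactly $|\mu|^2\le r$.

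For (ii), fix a unitary $u$. For $f\in\mathfrak{S}(\mathfrak{A})$ define $g(x)=f(u^*xu)$.
\begin{itemize}
\item $g$ is linear and positive, since $u^*x^*xu=(xu)^*(xu)\ge0$.
\item $g(\boldsymbol{1})=f(u^*u)=1$, so $g$ is a state, because positive functionals attain their norm at the unit.
\item Using $(u^*au)^*(u^*au)=u^*a^*uu^*au=u^*a^*au$, we get
\[
(f(u^*au),\,f((u^*au)^*(u^*au)))=(g(a),\,g(a^*a)).
\]
\end{itemize}
This gives $DW_{\mathfrak{A}}(u^*au)\subseteq DW_{\mathfrak{A}}(a)$. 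The reverse inclusion follows by applying the same argument to $u^*au$ with the unitary $u^*$, since $u(u^*au)u^*=a$.

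For (iii), expand
\[
(\alpha a+\beta\boldsymbol{1})^*(\alpha a+\beta\boldsymbol{1})=|\alpha|^2a^*a+\alpha\overline{\beta}\,a+\overline{\alpha}\beta\,a^*+|\beta|^2\boldsymbol{1}.
\]
A state is Hermitian, so $f(a^*)=\overline{f(a)}$. Applying $f$ and writing $\mu=f(a)$, $r=f(a^*a)$ gives $|\alpha|^2r+2\Re(\alpha\overline{\beta}\mu)+|\beta|^2$, and clearly $f(\alpha a+\beta\boldsymbol{1})=\alpha\mu+\beta$. As $f$ runs over all states, this yields the stated set equality.

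For (iv), the key input is the structure of the state space of $\mathfrak{A}=\oplus_{i=1}^n\mathfrak{A}_i$, each $\mathfrak{A}_i$ unital.

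First, for "$\supseteq$": each $f_i\in\mathfrak{S}(\mathfrak{A}_i)$ lifts to the state $f_i\circ p_i$ of $\mathfrak{A}$, where $p_i$ is the coordinate projection. Since $a^*a=\oplus_i a_i^*a_i$, this lifted state sends $(a,a^*a)$ to $(f_i(a_i),f_i(a_i^*a_i))$. Hence each $DW_{\mathfrak{A}_i}(a_i)\subseteq DW_{\mathfrak{A}}(a)$. The shell $DW_{\mathfrak{A}}(a)$ is convex (the Aramba\v{s}i\'c et al.\ result quoted in the introduction; it also follows directly from convexity of $\mathfrak{S}(\mathfrak{A})$ and linearity of $f\mapsto(f(a),f(a^*a))$). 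So it contains the convex hull of the union.

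Next, for "$\subseteq$", which I expect to be the only step needing care: decompose an arbitrary state. Let $e_i$ be the central projection that is the unit of $\mathfrak{A}_i$, so $\sum_i e_i=\boldsymbol{1}$. Given $f\in\mathfrak{S}(\mathfrak{A})$, set $t_i=f(e_i)\ge0$, so $\sum t_i=1$. For $t_i>0$ define $f_i(x_i)=t_i^{-1}f(\iota_i(x_i))$, where $\iota_i$ is the inclusion into the $i$-th summand. Then $f_i$ is positive with $f_i(e_i)=1$, hence a state on $\mathfrak{A}_i$. For any $x$, linearity gives $f(x)=\sum_i f(xe_i)=\sum_i t_if_i(x_i)$, where terms with $t_i=0$ vanish: by Lemma~\ref{cs inq}, $|f(xe_i)|^2\le f(x x^*)f(e_i)=0$. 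Applying this with $x=a$ and $x=a^*a$ gives
\[
(f(a),f(a^*a))=\sum_i t_i\bigl(f_i(a_i),f_i(a_i^*a_i)\bigr),
\]
which is a convex combination of points of the $DW_{\mathfrak{A}_i}(a_i)$. This completes (iv).
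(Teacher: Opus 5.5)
Your proof is correct and follows the paper's argument item by item: Cauchy--Schwarz against the unit for (i), transporting states by conjugation with the unitary for (ii), direct expansion using $f(a^*)=\overline{f(a)}$ for (iii), and writing a state on $\oplus_{i=1}^n\mathfrak{A}_i$ as a convex combination of states on the summands for (iv). The only difference is in (iv). There you prove the state decomposition yourself, using the central projections $e_i$ and Cauchy--Schwarz to handle zero weights, whereas the paper simply cites it; and you get the reverse inclusion from the lifts $f_i\circ p_i$ plus convexity of the shell, which amounts to the paper's explicit state $\sum_i\lambda_i f_i$.
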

	\begin{proof}
        (i) If $(\mu,r)\in DW_{\mathfrak{A}}(a)$, then there exists $f\in\mathfrak{S}(\mathfrak{A})$ such that $f(a)=\mu$ and $f(a^*a)=r$. Then using Lemma~\ref{cs inq}, we get $|\mu|^2=|f(a)|^2\leq f(a^*a)=r$. Hence, 
        \[DW_{\mathfrak{A}}(a)\subseteq\{(\mu,r)\in\mathbb{C}\times[0,\infty):|\mu|^2\leq r\}.\]
	
		(ii) Let $f\in\mathfrak{S}(\mathfrak{A})$ and $u$ be unitary. Define $f_u(a)=f(uau^*)$. Then $f_u\in\mathfrak{S}(\mathfrak{A})$. Let $(f(a),f(a^*a))\in DW_{\mathfrak{A}}(a)$. Moreover, $f_u(u^*au)=f(a)$ and $f_u((u^*au)^*(u^*au))=f(a^*a)$ imply $(f(a),f(a^*a))\in DW_{\mathfrak{A}}(u^*au)$. Thus, $DW_{\mathfrak{A}}(a)\subseteq DW_{\mathfrak{A}}(u^*au)$. Conversely, defining $f_u'(a)=f(u^*au)$, one similarly obtains 
$DW_{\mathfrak{A}}(u^*au)\subseteq DW_{\mathfrak{A}}(a)$. 

 (iii) For any $a\in\mathfrak{A}$ and $f\in\mathfrak{S}(\mathfrak{A})$, we have \[
	DW_{\mathfrak{A}}(\alpha a+\beta\boldsymbol{1})=\left\{(f(\alpha a+\beta), f((\alpha a+\beta)^*(\alpha a+\beta))):f\in\mathfrak{S}(\mathfrak{A})\right\}.\]
Now, \[f(\alpha a+\beta)=\alpha f(a)+\beta=\alpha \mu+\beta\] and
 		 \[f((\alpha a+\beta)^*(\alpha a+\beta)) 
 	=|\alpha|^2f(a^*a)+2\Re(\alpha\overline{\beta}f(a))+|\beta|^2=|\alpha|^2r+2\Re(\alpha\overline{\beta}\mu)+|\beta|^2,\] where $\mu=f(a),\ r=f(a^*a)$.

 (iv) Let $a=a_1\oplus\cdots\oplus a_n\in \mathfrak{A}=\oplus_{i=1}^n \mathfrak{A}_i$. Every $f\in\mathfrak{S}(\mathfrak{A})$ can be written as $f=\sum_{i=1}^n \lambda_i f_i$, 
where $\lambda_i\ge 0$, $\sum_{i=1}^n \lambda_i=1$ and $f_i\in\mathfrak{S}(\mathfrak{A}_i)$ \cite[Lemma~8]{d2021pseudo}. 
Hence, $(f(a),f(a^*a))=\sum_{i=1}^n \lambda_i (f_i(a_i),f_i(a_i^*a_i))
\in \left(DW_{\mathfrak{A}_1}(a_1)\bigcup\cdots\bigcup DW_{\mathfrak{A}_n}(a_n)\right)^\wedge$,
which shows
\[
DW_{\mathfrak{A}}(a)\subseteq \left(DW_{\mathfrak{A}_1}(a_1)\bigcup\cdots\bigcup DW_{\mathfrak{A}_n}(a_n)\right)^\wedge.
\]
Conversely, let $(u_i,v_i)\in DW_{\mathfrak{A}_i}(a_i)$ and $\lambda_i\ge 0$ with $\sum_{i=1}^n \lambda_i=1$. 
Choose $f_i\in\mathfrak{S}(\mathfrak{A}_i)$ such that $(u_i,v_i)=(f_i(a_i),f_i(a_i^*a_i))$. 
Then $f=\sum_{i=1}^n \lambda_i f_i\in\mathfrak{S}(\mathfrak{A})$ and
$(f(a),f(a^*a))=\sum_{i=1}^n \lambda_i (u_i,v_i)\in DW_{\mathfrak{A}}(a)$.
Thus,
\[
\left(DW_{\mathfrak{A}_1}(a_1)\bigcup\cdots\bigcup DW_{\mathfrak{A}_n}(a_n)\right)^\wedge \subseteq DW_{\mathfrak{A}}(a).
\]
	\end{proof}
   To further study the geometrical properties of the algebraic Davis--Wielandt shell we need the following notations. For $a\in\mathfrak{A}$ and $\mu\in W_{\mathfrak{A}}(a)$ consider
    \[\mathscr{L}_{\mu}^{\mathfrak{A}}(a)=\left\{r:(\mu,r)\in DW_{\mathfrak{A}}(a)\right\}\subseteq [0,\|a\|^2].\] It is easy to verify that $\mathscr{L}_{\mu}^{\mathfrak{A}}(a)$ is a compact subset of $\mathbb{R}$ (see \cite{arambavsic2018roberts}). The height function $h_a^{\mathfrak{A}}:W_{\mathfrak{A}}(a)\to\mathbb{R}$ defined as \begin{eqnarray}\label{h-new}
        h_a^{\mathfrak{A}}(\mu)=\sup\mathscr{L}_{\mu}^{\mathfrak{A}}(a). 
     \end{eqnarray}
      The upper boundary of $DW_{\mathfrak{A}}(a)$ (see \cite{arambavsic2018roberts}) is the set
    \[
    \partial_{+}DW_{\mathfrak{A}}(a)=\left\{(\mu,\ h_a^{\mathfrak{A}}(\mu))\in\mathbb{C}\times\mathbb{R}:\mu\in W_{\mathfrak{A}}(a),h_a^{\mathfrak{A}}(\mu)=\sup\mathscr{L}_{\mu}^{\mathfrak{A}}(a)\right\}.
     \] In other words, $\partial_{+}DW_{\mathfrak{A}}(a)$ is precisely the graph of the function $h_a^{\mathfrak{A}}$. Similarly, we can define the lower boundary of $DW_{\mathfrak{A}}(a)$ is the set
     \[\partial_{-}DW_{\mathfrak{A}}(a)=\left\{(\mu,g_a^{\mathfrak{A}}(\mu))\in\mathbb{C}\times\mathbb{R}:\mu\in W_{\mathfrak{A}}(a),\ g_a^{\mathfrak{A}}(\mu)=\inf\mathscr{L}_{\mu}^{\mathfrak{A}}(a)\right\}.\]
 In the following theorem, we prove the convexity of $\mathscr{L}^{\mathfrak{A}}_\mu(a)$.
    \begin{theorem}
         $\mathscr{L}_{\mu}^{\mathfrak{A}}(a)$ is a convex set in $\mathbb{R}$ for each $\mu\in W_{\mathfrak{A}}(a)$.
    \end{theorem}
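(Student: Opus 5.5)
The plan is to use the convexity of the state space $\mathfrak{S}(\mathfrak{A})$ directly. Fix $\mu\in W_{\mathfrak{A}}(a)$ and take $r_1,r_2\in\mathscr{L}_{\mu}^{\mathfrak{A}}(a)$ and $t\in[0,1]$. The goal is to show that $tr_1+(1-t)r_2\in\mathscr{L}_{\mu}^{\mathfrak{A}}(a)$, that is, to produce a state $f$ with $f(a)=\mu$ and $f(a^*a)=tr_1+(1-t)r_2$.

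By the definition of $\mathscr{L}_{\mu}^{\mathfrak{A}}(a)$ there are states $f_1,f_2\in\mathfrak{S}(\mathfrak{A})$ with $(f_1(a),f_1(a^*a))=(\mu,r_1)$ and $(f_2(a),f_2(a^*a))=(\mu,r_2)$. Set $f=tf_1+(1-t)f_2$. We check that $f$ is a state:
\begin{itemize}
\item $f$ is positive, since it is a convex combination of positive functionals;
\item $f(\boldsymbol{1})=t+(1-t)=1$;
\item $\|f\|\le t\|f_1\|+(1-t)\|f_2\|=1$, with equality because $f(\boldsymbol{1})=1$.
\end{itemize}
This is just the convexity of $\mathfrak{S}(\mathfrak{A})$ recalled in the introduction.

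By linearity,
\[
f(a)=t\mu+(1-t)\mu=\mu, \qquad f(a^*a)=tr_1+(1-t)r_2.
\]
Hence $(\mu,tr_1+(1-t)r_2)\in DW_{\mathfrak{A}}(a)$, which gives the claim.

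Alternatively, the result follows at once from the convexity of $DW_{\mathfrak{A}}(a)$ established in \cite{arambavsic2018roberts}. The set $\mathscr{L}_{\mu}^{\mathfrak{A}}(a)$ is the projection onto the $\mathbb{R}$-coordinate of the intersection of $DW_{\mathfrak{A}}(a)$ with the affine plane $\{\mu\}\times\mathbb{R}$. Intersections of convex sets are convex, and linear images of convex sets are convex.

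There is no real obstacle here. The only point needing care is that the two chosen states share the same first coordinate $\mu$, so the convex combination keeps the first coordinate fixed. Combined with compactness, the result shows that $\mathscr{L}_{\mu}^{\mathfrak{A}}(a)=[g_a^{\mathfrak{A}}(\mu),h_a^{\mathfrak{A}}(\mu)]$ is a closed interval.
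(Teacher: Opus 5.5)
Your proof is correct and is essentially the paper's argument: you pick states $f_1,f_2$ with $f_i(a)=\mu$ and $f_i(a^*a)=r_i$, take their convex combination (a state, since $\mathfrak{S}(\mathfrak{A})$ is convex), and read off $f(a)=\mu$ and $f(a^*a)=tr_1+(1-t)r_2$ by linearity. Your alternative via slicing the convex set $DW_{\mathfrak{A}}(a)$ with $\{\mu\}\times\mathbb{R}$ is also valid and gives the same conclusion.
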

    \begin{proof}
        Let $r_1,r_2\in\mathscr{L}_{\mu}^{\mathfrak{A}}(a)$. Then there exist $f_i\in\mathfrak{S}(\mathfrak{A})$ such that $f_i(a)=\mu$, $f_i(a^*a)=r_i$, $i=1,2$. For any $\lambda\in[0,1]$, define $f=\lambda f_1+(1-\lambda) f_2$. Since $\mathfrak{S}(\mathfrak{A})$ is convex, $f\in\mathfrak{S}(\mathfrak{A})$. Now, 
        \[f(a)=\mu\ \mathrm{and}\ f(a^*a)=\lambda r_1+(1-\lambda)r_2 \implies (\mu,\lambda r_1+(1-\lambda)r_2)\in DW_{\mathfrak{A}}(a).\] Hence, $\lambda r_1+(1-\lambda)r_2\in\mathscr{L}_{\mu}^{\mathfrak{A}}(a)$. This proves the result.
    \end{proof}
\begin{remark}
    As $\mathscr{L}_{\mu}^{\mathfrak{A}}(a)$ is a compact convex set in $\mathbb{R}$, it must be a closed interval.
\end{remark}
We distinguish between the spatial Davis--Wielandt shell and algebraic Davis--Wielandt shell. We first recall that for any $T \in \mathscr{B}(\mathscr{H})$ and $\mu \in W(T)$ (see \cite{li2008davis}),
\[
\mathscr{L}_\mu(T)= \{r:(\mu,r)\in DW(T)\},
\] where $DW(T)$ denotes the spatial Davis--Wielandt shell. 
\begin{lemma}
    If dim$\mathscr{H}<\infty$ then $\mathscr{L}_\mu(T)$ is compact in $\mathbb{R}$ for each $\mu\in W(T)$.
\end{lemma}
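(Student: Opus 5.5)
The plan is to realize $\mathscr{L}_\mu(T)$ as the image of a compact set under a continuous map, and then use that the continuous image of a compact set is compact. Assume $\dim\mathscr{H}=n<\infty$. The unit sphere $S=\{x\in\mathscr{H}:\|x\|=1\}$ is then compact, since it is closed and bounded in a finite-dimensional normed space (Heine--Borel, after identifying $\mathscr{H}$ with $\mathbb{C}^n\simeq\mathbb{R}^{2n}$).

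Next, consider the map $\Phi:S\to\mathbb{C}\times\mathbb{R}$ given by $\Phi(x)=(\langle Tx,x\rangle,\langle T^*Tx,x\rangle)$. By definition $DW(T)=\Phi(S)$. Each coordinate of $\Phi$ is continuous, because
\[
|\langle Tx,x\rangle-\langle Ty,y\rangle|\le \|T\|\,\|x-y\|\,(\|x\|+\|y\|),
\]
and the same estimate holds with $T^*T$ in place of $T$. Hence $DW(T)$ is a compact subset of $\mathbb{C}\times\mathbb{R}$.

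Finally, fix $\mu\in W(T)$. The slice $DW(T)\cap(\{\mu\}\times\mathbb{R})$ is the intersection of a compact set with a closed set, so it is compact. Equivalently, $\Phi^{-1}(\{\mu\}\times\mathbb{R})=\{x\in S:\langle Tx,x\rangle=\mu\}$ is closed in $S$, hence compact. The set $\mathscr{L}_\mu(T)$ is the image of this slice under the continuous projection $(\mu,r)\mapsto r$, so it is compact. It is also nonempty, since $\mu\in W(T)$.

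None of these steps is a real obstacle. The only essential point is the compactness of the unit sphere, and this is exactly where finite dimensionality is used. In infinite dimensions the sphere is not compact, and $\mathscr{L}_\mu(T)$ can fail to be closed; this contrasts with the algebraic $\mathscr{L}^{\mathfrak{A}}_\mu(a)$, which is always compact.
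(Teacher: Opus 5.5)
Your proof is correct and takes essentially the same route as the paper: both rest on the compactness of $DW(T)$ in finite dimensions and the closedness of the slice $\{\mu\}\times\mathbb{R}$. The differences are minor. You prove that $DW(T)$ is compact, as the continuous image of the compact unit sphere, and you get boundedness explicitly through the projection; the paper simply cites compactness of $DW(T)$ and checks closedness with a sequence argument.
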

\begin{proof}
  Let $r_n\in\mathscr{L}_\mu(T)$ be such that $r_n\to r$. Then for each $n$ and $\mu\in W(T)$, $(\mu,r_n)\in DW(T)$. As $DW(T)$ is compact if $\mathscr{H}$ is finite dimensional, we have $(\mu,r)\in DW(T)$, this implies $r\in\mathscr{L}_\mu(T)$. 
\end{proof}
\begin{lemma}
    If dim$\mathscr{H}\geq3$, then $\mathscr{L}_\mu(T)$ is a convex set in $\mathbb{R}$ for each $\mu\in W(T)$.
\end{lemma}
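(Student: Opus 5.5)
The plan is to reduce the statement to the known convexity of the spatial Davis--Wielandt shell. It is well known (Au-Yeung--Poon, Li--Poon) that $DW(T)$ is convex whenever $\dim\mathscr{H}\geq 3$, as recalled in the introduction. Granting this, the argument is short. Fix $\mu\in W(T)$ and take $r_1,r_2\in\mathscr{L}_\mu(T)$. By definition $(\mu,r_1)$ and $(\mu,r_2)$ both lie in $DW(T)$. For $\lambda\in[0,1]$, convexity of $DW(T)\subseteq\mathbb{C}\times\mathbb{R}$ gives
\[
\lambda(\mu,r_1)+(1-\lambda)(\mu,r_2)=(\mu,\lambda r_1+(1-\lambda)r_2)\in DW(T),
\]
so $\lambda r_1+(1-\lambda)r_2\in\mathscr{L}_\mu(T)$. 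In other words, $\mathscr{L}_\mu(T)$ is the fibre of a convex set over the point $\mu$, which is the intersection of $DW(T)$ with the affine line $\{\mu\}\times\mathbb{R}$, and such an intersection is convex.

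The only substantive input is the convexity of $DW(T)$ for $\dim\mathscr{H}\geq3$, and this is the step I expect to be the main obstacle if it has to be justified rather than cited. The standard route uses the joint numerical range of three Hermitian operators. Write $T=H+iK$ with $H,K$ self-adjoint, and set $L=T^*T$. Then
\[
DW(T)=\{(\langle Hx,x\rangle+i\langle Kx,x\rangle,\ \langle Lx,x\rangle):\|x\|=1\},
\]
which is the joint numerical range $W(H,K,L)\subseteq\mathbb{R}^3$ under the identification $\mathbb{C}\times\mathbb{R}\simeq\mathbb{R}^3$. By the Au-Yeung--Poon theorem, the joint numerical range of three Hermitian operators is convex when $\dim\mathscr{H}\geq3$.

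If a self-contained argument is preferred, one reduces to a two-point problem. Take unit vectors $x,y$ realizing the two points, and restrict attention to a subspace of dimension at most $3$ containing $x$, $y$ and one extra direction. Here the hypothesis $\dim\mathscr{H}\geq 3$ is used, because a two-dimensional compression can give an ellipsoid surface, as in the nilpotent $2\times2$ example of the introduction. Within this subspace one shows that the joint range of the compressed triple is convex, and this is the delicate part of the Au-Yeung--Poon proof. I would cite it rather than reprove it.
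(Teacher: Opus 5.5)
Your proposal is correct and is essentially the paper's proof. Both invoke the convexity of $DW(T)$ for $\dim\mathscr{H}\geq 3$ and note that $\mathscr{L}_\mu(T)$ is the fibre of this convex set over $\mu$; your discussion of the Au-Yeung--Poon joint numerical range theorem only justifies that cited fact, which the paper takes as known.
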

\begin{proof}
    We know $DW(T)$ is convex if dim$\mathscr{H}\geq3$. Then for $\mu\in W(T)$, $r_1,r_2\in\mathscr{L}_\mu(T)$ and $\lambda\in[0,1]$, $\lambda(\mu,r_1)+(1-\lambda)(\mu,r_2)\in DW(T)$, which implies $\lambda r_1+(1-\lambda)r_2\in\mathscr{L}_\mu(T)$.
\end{proof}
In particular, if $\mathfrak{A}=\mathscr{B}(\mathscr{H})$ the following lemma gives a relation between $\mathscr{L}_\mu(T)$ and $\mathscr{L}_\mu^{\mathfrak{A}}(T)$ for $T\in\mathscr{B}(\mathscr{H})$.
     \begin{lemma}
         If $T\in\mathfrak{A}\subseteq\mathscr{B}(\mathscr{H})$, then $\mathscr{L}_{\mu}^{\mathfrak{A}}(T)=\overline{\mathscr{L}_{\mu}(T)^\wedge}$ for each $\mu\in W(T)$.
     \end{lemma}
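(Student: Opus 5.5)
The inclusion $\overline{\mathscr{L}_{\mu}(T)^\wedge}\subseteq\mathscr{L}_{\mu}^{\mathfrak{A}}(T)$ is straightforward, and I would prove it first. Fix $\mu\in W(T)$ and $r\in\mathscr{L}_\mu(T)$. Then there is a unit vector $x$ with $\langle Tx,x\rangle=\mu$ and $\langle T^*Tx,x\rangle=r$. The vector state $f_x(S)=\langle Sx,x\rangle$ restricted to $\mathfrak{A}$ lies in $\mathfrak{S}(\mathfrak{A})$, so $(\mu,r)\in DW_{\mathfrak{A}}(T)$ and hence $r\in\mathscr{L}^{\mathfrak{A}}_\mu(T)$. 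By the preceding theorem and remark, $\mathscr{L}^{\mathfrak{A}}_\mu(T)$ is a closed interval, so it contains the closed convex hull of $\mathscr{L}_\mu(T)$.

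For the reverse inclusion I would use the identity $DW_{\mathfrak{A}}(T)=\overline{DW(T)^\wedge}$ from the introduction. Write $C=DW(T)^\wedge$, a convex set in $\mathbb{R}^3$ whose closure is $DW_{\mathfrak{A}}(T)$. The aim is to compare the fibre of $\overline{C}$ over $\mu$ with the closed convex hull of the fibre of $DW(T)$ over $\mu$. This splits into two steps.

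\textbf{Step 1: from $DW(T)$ to $C$.} I would show that the fibres of $C$ are the convex hulls of the fibres of $DW(T)$. For $\dim\mathscr{H}\ge 3$ this is immediate, because $DW(T)$ is already convex, so $C=DW(T)$. For $\dim\mathscr{H}\le 2$ I would argue directly from the explicit geometry. When $\dim\mathscr{H}=1$ the shell is a single point. When $\dim\mathscr{H}=2$, $DW(T)$ is the boundary surface of an ellipsoid, possibly degenerate (as in the nilpotent example of the introduction). Every vertical line through the projected ellipse then meets this surface in two points, so its convex hull is exactly the vertical segment between them, which is the corresponding fibre of the solid ellipsoid.

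\textbf{Step 2: from $C$ to $\overline{C}$.} I would pass from $C$ to $\overline{C}$ using the standard convex-analysis fact that $\operatorname{ri}\overline{C}=\operatorname{ri}C$. If $\mu$ lies in the relative interior of the projection $W(T)$, then every point $(\mu,r)$ with $r$ strictly inside $\mathscr{L}^{\mathfrak{A}}_\mu(T)$ belongs to $\operatorname{ri}\overline{C}\subseteq C$. Taking closures, the fibre of $\overline{C}$ over $\mu$ equals the closure of the fibre of $C$, which by Step 1 is $\overline{\mathscr{L}_\mu(T)^\wedge}$.

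\textbf{Main obstacle: $\mu$ on the relative boundary of $W(T)$.} In infinite dimensions, and with $\mu$ on the relative boundary of $W(T)$, the fibre of the closure can a priori be strictly larger than the closure of the fibre. Points $(\mu,h(\mu))$ may arise only as limits of points $(\mu_n,r_n)\in DW(T)$ with $\mu_n\neq\mu$. Here I would first try to restrict to the face of $\overline{C}$ lying over the supporting line of $W_{\mathfrak{A}}(T)$ through $\mu$. Choose $\theta$ with $\Re(e^{i\theta}\mu)=\max\Re W_{\mathfrak{A}}(e^{i\theta}T)$. Unit vectors $x$ with $\langle Tx,x\rangle=\mu$ then lie in the eigenspace $M$ of $\Re(e^{i\theta}T)$ for its top eigenvalue, which is nonempty since $\mu\in W(T)$. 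I would reduce to the compression of $T$ and $T^*T$ to $M$, and then rerun the relative-interior argument one dimension lower, inside the face. If this reduction fails, for instance because $\mu$ is a limit point not attained within the face, the statement would need the additional hypothesis that $\mu$ lies in the relative interior of $W(T)$, or that $\dim\mathscr{H}<\infty$. In the finite-dimensional case, compactness of $DW(T)$ by the earlier lemma gives $C$ compact, so $\overline{C}=C$ and Step 1 alone finishes the proof.
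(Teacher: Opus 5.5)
Your first inclusion is correct, and Steps 1--2 are sound. They prove the lemma when $\dim\mathscr{H}<\infty$, and in general when $\mu\in\operatorname{ri}W(T)$: there the vertical line over $\mu$ meets $\operatorname{ri}\overline{C}=\operatorname{ri}C$, so the fibre of $\overline{C}$ is the closure of the fibre of $C$.

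The genuine gap is the relative-boundary case, and your face reduction does not close it. A state $f$ with $f(\Re(e^{i\theta}T))=\max\sigma(\Re(e^{i\theta}T))$ need not live on the eigenspace $M$. When that top point of the spectrum is not isolated, $f$ can be a weak$^*$-limit of vector states at approximate eigenvectors orthogonal to $M$. So the face of $\overline{C}$ over the supporting line is not the closure of the face of $C$.

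In fact no argument can close this case, because the statement is false there. Here is a counterexample.
Let $\mathscr{H}=\ell^2$ with orthonormal basis $\{e_0\}\cup\{e_n^{\pm}:n\geq 1\}$. Let $T$ be diagonal with $Te_0=0$ and $Te_n^{\pm}=(-\tfrac1n\pm i)e_n^{\pm}$, and take $\mathfrak{A}=\mathscr{B}(\mathscr{H})$ and $\mu=0\in W(T)$.
If $\|x\|=1$ and $\langle Tx,x\rangle=0$, then $0=\Re\langle Tx,x\rangle=-\sum_n(|x_n^+|^2+|x_n^-|^2)/n$ forces $x\in\mathbb{C}e_0$. Hence $\mathscr{L}_0(T)=\{0\}=\overline{\mathscr{L}_0(T)^\wedge}$.
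On the other hand, $x_n=(e_n^++e_n^-)/\sqrt{2}$ gives $(\langle Tx_n,x_n\rangle,\|Tx_n\|^2)=(-\tfrac1n,1+\tfrac1{n^2})\to(0,1)$. So $(0,1)\in\overline{DW(T)^\wedge}=DW_{\mathfrak{A}}(T)$, and by convexity $[0,1]\subseteq\mathscr{L}^{\mathfrak{A}}_0(T)$.
Here $M=\mathbb{C}e_0$ and the compression only sees $r=0$. This is exactly the failure mode you anticipated.

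The paper's own proof passes over the same point. From $r=f(T^*T)$ it invokes the weak$^*$-density of convex combinations of vector states to conclude $r\in\mathscr{L}_\mu(T)^\wedge$. But the approximating vector states need not have first coordinate $\mu$. This is precisely the fibre-of-closure versus closure-of-fibre issue you isolated. The example also refutes the displayed claim $\mathscr{L}^{\mathfrak{A}}_\mu(T)=\overline{\mathscr{L}_\mu(T)}$ for $\dim\mathscr{H}\geq 3$ that follows the lemma.

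So your fallback is the right conclusion: the lemma needs an extra hypothesis such as $\dim\mathscr{H}<\infty$ or $\mu\in\operatorname{ri}W(T)$. Under either one, your argument is a complete proof.
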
 
     \begin{proof}
         From $DW(T)\subseteq DW_{\mathfrak{A}}(T)$, it is easy to see $\mathscr{L}_\mu(T)\subseteq\mathscr{L}_\mu^{\mathfrak{A}}(T)$. This implies $\overline{\mathscr{L}_\mu(T)^\wedge}\subseteq\mathscr{L}_\mu^{\mathfrak{A}}(T)$ for each $\mu\in W(T)$. On the other hand, if $r\in\mathscr{L}_\mu^{\mathfrak{A}}(T)$ for each $\mu\in W(T)$, then $(\mu,r)\in DW_{\mathfrak{A}}(T)=\overline{DW(T)^\wedge}$. Thus, $r=f(T^*T)$ for some $f\in\mathfrak{S}(\mathfrak{A})$. Since the set of all states of a unital $C^*$-algebra $\mathfrak{A}\subseteq\mathscr{B}(\mathscr{H})$ is a weak$^*$-closed convex hull of the set of all vector states of $\mathfrak{A}$, it follows that $r\in\mathscr{L}_{\mu}(T)^\wedge\subseteq\overline{\mathscr{L}_{\mu}(T)^\wedge}$. Thus, $\mathscr{L}_\mu^{\mathfrak{A}}(T)\subseteq\overline{\mathscr{L}_{\mu}(T)^\wedge}$.
     \end{proof}
   In addition, for $\mu\in W(T)$
\[
\mathscr{L}_{\mu}^{\mathfrak{A}}(T)=
\begin{cases}
\overline{\mathscr{L}_{\mu}(T)}, & \text{if }\dim\mathscr{H}\geq 3,\\
\mathscr{L}_{\mu}(T), & \text{if }3\leq \dim\mathscr{H}<\infty.
\end{cases}
\]
     
     In this context, considering $\mathfrak{A}=\mathfrak{M}_2(\mathbb{C})$, the algebra of $2\times2$ complex matrices, we can observe the characterization of $\mathscr{L}_{\mu}(T)$ and $\mathscr{L}_{\mu}^{\mathfrak{A}}(T)$ in the following example. 
    \begin{example}\label{eg1}
         Take $A=\begin{pmatrix}
0 & 1\\
0 & 0
\end{pmatrix}\in\mathfrak{M}_2(\mathbb{C})$. Let $x=(x_1,x_2)^t\in\mathbb{C}^2$. Then for any $(\mu,r)\in DW(A)$, we have 
\[\mu=\langle Ax,x\rangle=x_2\overline{x_1}\in W(A)=\left\{z\in\mathbb{C}:|z|\leq\dfrac{1}{2}\right\}\quad\mathrm{and}\quad r=\|Ax\|^2=|x_2|^2\in[0,1].\] Also, from $|x_1|^2+|x_2|^2=1$, we obtain
$|\mu|^2= r(1-r),\ r\in[0,1]$. Thus, \[\mathscr{L}_{\mu}(A)=\{r\in[0,1]:r(1-r)=|\mu|^2\},\] equivalently,\[\mathscr{L}_{\mu}(A)=\left\{\dfrac{1-\sqrt{1-4|\mu|^2}}{2},\dfrac{1+\sqrt{1-4|\mu|^2}}{2}\right\},\] which is not convex for $|\mu|<\dfrac{1}{2}$. For $\mu=\pm\dfrac{1}{2}$, $\mathscr{L}_{\mu}(A)=\left\{\dfrac{1}{2}\right\}$. Also, 
\[\mathscr{L}_{\mu}^{\mathfrak{A}}(A)=\overline{\mathscr{L}_{\mu}(A)^\wedge}=\left[\dfrac{1-\sqrt{1-4|\mu|^2}}{2},\dfrac{1+\sqrt{1-4|\mu|^2}}{2}\right],\] which is an interval in $\mathbb{R}$ for $|\mu|\leq\dfrac{1}{2}$ .
     \end{example}
     It is easy to verify that if $a$ is a normal element in $\mathfrak{A}$ then $\mathscr{L}_{\mu}^{\mathfrak{A}}(a)=\mathscr{L}_{\overline{\mu}}^{\mathfrak{A}}(a^*)$ for each $\mu\in W_{\mathfrak{A}}(a)$. However, the converse is not true, that is, both sets $\mathscr{L}_{\mu}^{\mathfrak{A}}(a)$ and $\mathscr{L}_{\overline{\mu}}^{\mathfrak{A}}(a^*)$ may be equal for each $\mu\in W_{\mathfrak{A}}(a)$ although $a$ is not normal. To clarify this, we consider the following example.
     \begin{example}
         Take $\mathfrak{A}=\mathfrak{M}_3(\mathbb{C})$, the algebra of $3\times3$ complex matrices. Let $A=\begin{pmatrix}
0 & 0 & 1\\
0 & 0 & 0\\
0 & 0 & 0
\end{pmatrix}\in\mathfrak{M}_3(\mathbb{C})$. Then $A^*A=\begin{pmatrix}
0 & 0 & 0\\
0 & 0 & 0\\
0 & 0 & 1
\end{pmatrix}$ and $AA^*=\begin{pmatrix}
1 & 0 & 0\\
0 & 0 & 0\\
0 & 0 & 0
\end{pmatrix}$ imply that $A$ is not normal. Let $x=(x_1,x_2,x_3)^t\in\mathbb{C}^3$. Then for any $(\mu,r)\in DW(A)$, we have 
\[\mu=\langle Ax,x\rangle=x_3\overline{x_1}\in W(A)=\left\{z\in\mathbb{C}:|z|\leq\dfrac{1}{2}\right\}\quad\mathrm{and}\quad r=\|Ax\|^2=|x_3|^2\in[0,1].\] Also, from $|x_1|^2=1-|x_2|^2-|x_3|^2\leq 1-|x_3|^2$ (since $|x_2|^2\geq0$), we obtain
$|\mu|^2\leq r(1-r),\ r\in[0,1]$. Thus, \[\mathscr{L}_{\mu}(A)=\{r\in[0,1]:r(1-r)\geq|\mu|^2\},\] or equivalently, \[\mathscr{L}_{\mu}(A)=\left[\dfrac{1-\sqrt{1-4|\mu|^2}}{2},\dfrac{1+\sqrt{1-4|\mu|^2}}{2}\right]\quad\mathrm{for}\ |\mu|\leq\dfrac{1}{2}.\] Hence, 
\[\mathscr{L}_{\mu}^{\mathfrak{A}}(A)=\mathscr{L}_{\mu}(A)=\left[\dfrac{1-\sqrt{1-4|\mu|^2}}{2},\dfrac{1+\sqrt{1-4|\mu|^2}}{2}\right]\quad\mathrm{for}\ |\mu|\leq\dfrac{1}{2}.\]
By a similar argument, one can verify that
\[\mathscr{L}_{\overline{\mu}}^{\mathfrak{A}}(A^*)=\left[\dfrac{1-\sqrt{1-4|\mu|^2}}{2},\dfrac{1+\sqrt{1-4|\mu|^2}}{2}\right]\quad\mathrm{for}\ |\mu|\leq\dfrac{1}{2}.\]
     \end{example}
    The next theorem gives an equivalent condition for the inclusion relations of algebraic Davis--Wielandt shells of two elements.
     \begin{theorem}
         Let $\mathfrak{A}_1,\mathfrak{A}_2$ be two $C^*$-algebras, and also let $a\in\mathfrak{A}_1,b\in\mathfrak{A}_2$ be two elements. Then the following conditions are equivalent:
         \begin{enumerate}
         \item [(i)] $DW_{\mathfrak{A}_1}(a)\subseteq DW_{\mathfrak{A}_2}(b)$.
         \item [(ii)] $\mathscr{L}_{\mu}^{\mathfrak{A}_1}(a)\subseteq \mathscr{L}_{\mu}^{\mathfrak{A}_2}(b)$ for each $\mu\in W_{\mathfrak{A}_1}(a)$.
         \end{enumerate}
     \end{theorem}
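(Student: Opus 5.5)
The argument is pure set-theoretic bookkeeping. It uses only the definition of the fibres, \[\mathscr{L}_{\mu}^{\mathfrak{A}_i}(x)=\{r:(\mu,r)\in DW_{\mathfrak{A}_i}(x)\},\] together with the observation that every point of a shell lies over some point of the corresponding numerical range. The plan is to prove the two implications separately by unpacking membership of a pair $(\mu,r)$.

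For (i)$\Rightarrow$(ii), fix $\mu\in W_{\mathfrak{A}_1}(a)$ and $r\in\mathscr{L}_{\mu}^{\mathfrak{A}_1}(a)$. By definition $(\mu,r)\in DW_{\mathfrak{A}_1}(a)$, so (i) gives $(\mu,r)\in DW_{\mathfrak{A}_2}(b)$. The first coordinate of this point is then some $g(b)$ with $g\in\mathfrak{S}(\mathfrak{A}_2)$, so $\mu\in W_{\mathfrak{A}_2}(b)$ and the set $\mathscr{L}_{\mu}^{\mathfrak{A}_2}(b)$ is defined. Since $(\mu,r)\in DW_{\mathfrak{A}_2}(b)$, we get $r\in\mathscr{L}_{\mu}^{\mathfrak{A}_2}(b)$. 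This gives the claimed inclusion for every $\mu\in W_{\mathfrak{A}_1}(a)$.

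For (ii)$\Rightarrow$(i), take an arbitrary point of $DW_{\mathfrak{A}_1}(a)$. It has the form $(f(a),f(a^*a))$ for some $f\in\mathfrak{S}(\mathfrak{A}_1)$. Put $\mu=f(a)$, so $\mu\in W_{\mathfrak{A}_1}(a)$, and $r=f(a^*a)$, so $r\in\mathscr{L}_{\mu}^{\mathfrak{A}_1}(a)$. Hypothesis (ii) yields $r\in\mathscr{L}_{\mu}^{\mathfrak{A}_2}(b)$. By the definition of that set, this means $(\mu,r)\in DW_{\mathfrak{A}_2}(b)$. Hence $DW_{\mathfrak{A}_1}(a)\subseteq DW_{\mathfrak{A}_2}(b)$.

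There is no real analytic obstacle. The one point needing care is that $\mathscr{L}_{\mu}^{\mathfrak{A}_2}(b)$ was introduced only for $\mu\in W_{\mathfrak{A}_2}(b)$. In the direction (i)$\Rightarrow$(ii) this is handled by the projection argument above. In the reverse direction it follows from (ii) itself, since any $\mu\in W_{\mathfrak{A}_1}(a)$ has $\mathscr{L}_{\mu}^{\mathfrak{A}_1}(a)$ nonempty (by the Remark it is a nonempty closed interval), so (ii) forces $\mathscr{L}_{\mu}^{\mathfrak{A}_2}(b)$ to be nonempty. Alternatively, one can extend the definition to all $\mu\in\mathbb{C}$, giving the empty set when $\mu\notin W_{\mathfrak{A}_2}(b)$; this is consistent with the convention in the statement.
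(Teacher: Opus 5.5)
Your proof is correct and follows the paper's argument: both directions simply unpack the definition of the fibres $\mathscr{L}_{\mu}$. The paper handles the domain issue the same way, noting that (ii) forces $W_{\mathfrak{A}_1}(a)\subseteq W_{\mathfrak{A}_2}(b)$; your explicit projection argument for (i)$\Rightarrow$(ii) is a small extra piece of care that the paper leaves implicit.
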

     \begin{proof}
         To prove (i)$\implies$ (ii), first assume that $DW_{\mathfrak{A}_1}(a)\subseteq DW_{\mathfrak{A}_2}(b)$. Also, let $\mu\in W_{\mathfrak{A}_1}(a)$ and $r\in\mathscr{L}_{\mu}^{\mathfrak{A}_1}(a)$. 
         Then, by definition, $(\mu,r)\in DW_{\mathfrak{A}_1}(a)\subseteq DW_{\mathfrak{A}_2}(b)$. This implies $r\in\mathscr{L}_{\mu}^{\mathfrak{A}_2}(b)$. Hence, $\mathscr{L}_{\mu}^{\mathfrak{A}_1}(a)\subseteq \mathscr{L}_{\mu}^{\mathfrak{A}_2}(b)$ for each $\mu\in W_{\mathfrak{A}_1}(a)$.

          \noindent (ii) $\implies$ (i): Let $\mathscr{L}_{\mu}^{\mathfrak{A}_1}(a)\subseteq \mathscr{L}_{\mu}^{\mathfrak{A}_2}(b)$ for each $\mu\in W_{\mathfrak{A}_1}(a)$. This implies $W_{\mathfrak{A}_1}(a)\subseteq W_{\mathfrak{A}_2}(b)$. Suppose $(\mu,r)\in DW_{\mathfrak{A}_1}(a)$. Then $\mu\in W_{\mathfrak{A}_1}(a)\subseteq W_{\mathfrak{A}_2}(b)$ and $r\in\mathscr{L}_{\mu}^{\mathfrak{A}_1}(a)$. By assumption, $r\in \mathscr{L}_{\mu}^{\mathfrak{A}_2}(b)$ implies $(\mu,r)\in DW_{\mathfrak{A}_2}(b)$. Therefore, $DW_{\mathfrak{A}_1}(a)\subseteq DW_{\mathfrak{A}_2}(b)$.
     \end{proof}
      \begin{theorem}\label{supL}
         Let $a\in\mathfrak{A}$. Then for each $\mu\in W_{\mathfrak{A}}(a)$, we have $h_a^{\mathfrak{A}}(\mu)=h_{a^*}^{\mathfrak{A}}(\overline{\mu})$.
     \end{theorem}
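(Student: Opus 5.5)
The plan is to show the two inequalities $h_a^{\mathfrak{A}}(\mu)\le h_{a^*}^{\mathfrak{A}}(\overline{\mu})$ and $h_{a^*}^{\mathfrak{A}}(\overline{\mu})\le h_a^{\mathfrak{A}}(\mu)$. The second follows from the first applied to $a^*$ in place of $a$ and $\overline{\mu}$ in place of $\mu$, since $(a^*)^*=a$. For any state $f$ we have $f(a^*)=\overline{f(a)}$. Hence $\overline{\mu}\in W_{\mathfrak{A}}(a^*)$ exactly when $\mu\in W_{\mathfrak{A}}(a)$, so both sides of the statement are defined. In these terms the claim reads
\[
\sup\{f(a^*a): f\in\mathfrak{S}(\mathfrak{A}),\ f(a)=\mu\}=\sup\{g(aa^*): g\in\mathfrak{S}(\mathfrak{A}),\ g(a^*)=\overline{\mu}\}.
\]
So it suffices to prove the following. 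Given $f\in\mathfrak{S}(\mathfrak{A})$ with $f(a)=\mu$, there is $g\in\mathfrak{S}(\mathfrak{A})$ with $g(a^*)=\overline{\mu}$ and $g(aa^*)\ge f(a^*a)$.

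To construct $g$, I will apply the GNS theorem (Theorem~\ref{gns}) to $f$. This gives a representation $\pi$ on $\mathscr{H}$ and a unit vector $\xi$ with $f(x)=\langle \pi(x)\xi,\xi\rangle$. Write $T=\pi(a)$, so that $\langle T\xi,\xi\rangle=\mu$ and $\|T\xi\|^2=f(a^*a)$. Let $\mathscr{K}=\mathrm{span}\{\xi,T\xi\}$, which has dimension $1$ or $2$, and let $P$ be the orthogonal projection onto $\mathscr{K}$. Set $B=PTP|_{\mathscr{K}}$. Since $T\xi\in\mathscr{K}$, we get $B\xi=T\xi$, hence $\langle B\xi,\xi\rangle=\mu$ and $\|B\xi\|^2=f(a^*a)$. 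Moreover $B^*=PT^*P|_{\mathscr{K}}$.

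The key step is a matrix fact in dimension at most $2$. Every $2\times2$ complex matrix is unitarily similar to its transpose. So in an orthonormal basis of $\mathscr{K}$ there is a unitary $W$ with $B^{t}=WBW^*$. Taking entrywise conjugates gives $B^*=\overline{W}\,\overline{B}\,\overline{W}^{\,*}$. Put $\eta=\overline{W}\,\overline{\xi}\in\mathscr{K}$, which is a unit vector. A direct computation then gives
\[
\langle B^*\eta,\eta\rangle=\langle \overline{B}\,\overline{\xi},\overline{\xi}\rangle=\overline{\mu},\qquad \|B^*\eta\|=\|\overline{B}\,\overline{\xi}\|=\|B\xi\|.
\]
The one-dimensional case is trivial, since then $B$ is a scalar and we take $\eta=\xi$.

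Now define $g(x)=\langle\pi(x)\eta,\eta\rangle$. This is a state of $\mathfrak{A}$, being a vector state composed with a unital $^*$-representation. Since $\eta\in\mathscr{K}$,
\[
g(a^*)=\langle T^*\eta,\eta\rangle=\langle B^*\eta,\eta\rangle=\overline{\mu},
\]
\[
g(aa^*)=\|T^*\eta\|^2\ge\|PT^*\eta\|^2=\|B^*\eta\|^2=\|B\xi\|^2=f(a^*a).
\]
Taking suprema over $f$ gives $h_a^{\mathfrak{A}}(\mu)\le h_{a^*}^{\mathfrak{A}}(\overline{\mu})$, and the symmetric argument gives equality.

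The main obstacle is the compression step. The key points are that $B\xi=T\xi$ holds exactly because $T\xi\in\mathscr{K}$, and that passing to $B^*$ only decreases the norm, which is the direction we need. The other point to verify carefully is the $2\times2$ unitary similarity $B^t\simeq B$. I would check it via Schur triangularization: after unitary conjugation, $B$ can be taken upper triangular with real nonnegative off-diagonal entry. Conjugating the transpose by the flip matrix $\begin{pmatrix}0&1\\1&0\end{pmatrix}$ gives an upper triangular matrix with the same off-diagonal entry and the diagonal entries swapped. A $2\times 2$ triangular matrix is determined up to unitary similarity by its eigenvalues (in either order) and the modulus of its off-diagonal entry, which is fixed by $\|B\|_F$. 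So the two matrices are unitarily similar.
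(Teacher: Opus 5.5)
Your proof is correct, and it takes a genuinely different route from the paper's. The paper observes $f(a^*)=\overline{f(a)}$ and then asserts that it suffices to show $\sup\{f(a^*a):f\in\mathfrak{S}(\mathfrak{A})\}=\sup\{f(aa^*):f\in\mathfrak{S}(\mathfrak{A})\}$, which it obtains from $\|a^*a\|=\|aa^*\|=\|a\|^2$. That reduction discards the constraint $f(a)=\mu$. Equality of the unconstrained suprema only shows that the global maxima of $h_a^{\mathfrak{A}}$ and $h_{a^*}^{\mathfrak{A}}$ coincide (both equal $\|a\|^2$). It does not show that the heights agree over each fixed $\mu$.

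The constraint genuinely matters, as the paper's own shift example shows. Over $\mu=0$ the fibre of $DW_{\mathfrak{A}}(S)$ is $\{1\}$, while that of $DW_{\mathfrak{A}}(S^*)$ is $[0,1]$. So the tops agree but the bottoms do not, and any valid proof must use something specific to the top of the fibre.

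Your argument supplies exactly that fibrewise step. Compressing $\pi(a)$ to $\mathrm{span}\{\xi,\pi(a)\xi\}$ preserves $\langle\pi(a)\xi,\xi\rangle$ and $\|\pi(a)\xi\|$ exactly. The $2\times 2$ unitary similarity $B^t=WBW^*$ then gives a unit vector $\eta$ with $\langle B^*\eta,\eta\rangle=\overline{\mu}$ and $\|B^*\eta\|=\|B\xi\|$. Passing from $B^*$ back to $\pi(a)^*$ can only increase the norm. That one-sided inequality is why you get the upper boundary and, correctly, nothing about the lower one. Reducing to dimension two is also what lets you use a fact that fails for general $n\times n$ matrices with $n\geq 3$.

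In short, the paper's route is shorter but, as written, only establishes equality of the maximal heights. Your route proves the pointwise statement as claimed.
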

     \begin{proof}
         For $a\in\mathfrak{A}$ and $\mu\in W_{\mathfrak{A}}(a)$, we have
         \[\mathscr{L}_{\mu}^{\mathfrak{A}}(a)=\left\{f(a^*a):f\in\mathfrak{S}(\mathfrak{A}),f(a)=\mu\right\}\] and 
         \[\mathscr{L}_{\overline{\mu}}^{\mathfrak{A}}(a^*)=\left\{f(aa^*):f\in\mathfrak{S}(\mathfrak{A}),f(a^*)=\overline{\mu}\right\}.\] Since $f(a^*)=\overline{f(a)}$ holds for all $f\in\mathfrak{S}(\mathfrak{A})$, to prove the required result, it suffices to show that \[\sup\{f(a^*a):f\in\mathfrak{S}(\mathfrak{A})\}=\sup\{f(aa^*):f\in\mathfrak{S}(\mathfrak{A})\}.\] 
         Now, for positive elements, $\|a^*a\|=\sup\{f(a^*a):f\in\mathfrak{S}(\mathfrak{A})\}$ and $\|aa^*\|=\sup\{f(aa^*):f\in\mathfrak{S}(\mathfrak{A})\}$. Also, we have $\|a^*a\|=\|aa^*\|=\|a\|^2.$ This completes the proof.
     \end{proof} 
      Next, we want to study some characteristics of the height function $h_a^{\mathfrak{A}}$.
    \begin{theorem}
Let $a\in\mathfrak{A}$, $\mu\in W_{\mathfrak{A}}(a)$ and the height function 
$h_a^{\mathfrak{A}}$ be defined in \eqref{h-new}. Then on $W_{\mathfrak{A}}(a)$
\begin{enumerate}
    \item [(i)] $h_a^{\mathfrak{A}}$ is non-negative.
    \item [(ii)] $h_a^{\mathfrak{A}}$ is concave.
    \item [(iii)] $h_a^{\mathfrak{A}}$ is upper semi-continuous.
\end{enumerate}
\end{theorem}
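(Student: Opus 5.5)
The plan is to work directly with the state-space description
\[
h_a^{\mathfrak{A}}(\mu)=\sup\{f(a^*a):f\in\mathfrak{S}(\mathfrak{A}),\ f(a)=\mu\},
\]
which is a supremum over the nonempty set of states with $f(a)=\mu$. The supremum is in fact attained, since $\mathscr{L}_{\mu}^{\mathfrak{A}}(a)$ is a compact interval.

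For (i), I would simply take any state $f$ with $f(a)=\mu$. Since $a^*a\geq 0$ and $f$ is positive, $f(a^*a)\geq 0$. More precisely, Lemma~\ref{cs inq} gives $f(a^*a)\geq |f(a)|^2=|\mu|^2\geq 0$, and therefore $h_a^{\mathfrak{A}}(\mu)\geq|\mu|^2\geq0$.

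For (ii), take $\mu_1,\mu_2\in W_{\mathfrak{A}}(a)$ and $\lambda\in[0,1]$. The point $\mu=\lambda\mu_1+(1-\lambda)\mu_2$ lies in $W_{\mathfrak{A}}(a)$ because the numerical range is convex. Since the suprema are attained, I would pick states $f_i$ with $f_i(a)=\mu_i$ and $f_i(a^*a)=h_a^{\mathfrak{A}}(\mu_i)$; if one prefers not to use attainment, $\varepsilon$-optimal states work equally well. Then $f=\lambda f_1+(1-\lambda)f_2$ is a state with $f(a)=\mu$, so
\[
h_a^{\mathfrak{A}}(\mu)\geq f(a^*a)=\lambda h_a^{\mathfrak{A}}(\mu_1)+(1-\lambda)h_a^{\mathfrak{A}}(\mu_2),
\]
which is concavity. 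This is the same convexity argument used for $\mathscr{L}_\mu^{\mathfrak{A}}(a)$.

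For (iii), I expect the main care to be needed in the topology. I would show that the hypograph-type set, or more simply the superlevel sets $\{\mu: h_a^{\mathfrak{A}}(\mu)\geq t\}$, are closed. Take $\mu_n\to\mu$ in $W_{\mathfrak{A}}(a)$ with $\limsup h_a^{\mathfrak{A}}(\mu_n)=L$, and pass to a subsequence along which $h_a^{\mathfrak{A}}(\mu_n)\to L$. Choose states $f_n$ with $f_n(a)=\mu_n$ and $f_n(a^*a)=h_a^{\mathfrak{A}}(\mu_n)$. By weak$^*$-compactness of $\mathfrak{S}(\mathfrak{A})$, a subnet $f_{n_\alpha}$ converges weak$^*$ to some state $f$. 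Evaluating at $a$ and at $a^*a$ gives $f(a)=\mu$ and $f(a^*a)=L$, hence $h_a^{\mathfrak{A}}(\mu)\geq L=\limsup_{\nu\to\mu}h_a^{\mathfrak{A}}(\nu)$. That inequality is exactly upper semicontinuity. Equivalently, the argument shows that $DW_{\mathfrak{A}}(a)$ is compact and that $h_a^{\mathfrak{A}}$ is the upper envelope of a compact set, and one can cite the compactness of the shell from \cite{arambavsic2018roberts} instead of redoing the net argument. The only subtlety is that the weak$^*$ topology requires subnets rather than subsequences, but that causes no harm since we only evaluate the limit functional at two fixed elements.
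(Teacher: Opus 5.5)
Your proposal is correct and follows essentially the paper's route: positivity of states for (i), convex combinations for (ii), and compactness for (iii). The only difference is that the paper argues through the convexity and compactness of the shell $DW_{\mathfrak{A}}(a)$, with a closed-hypograph formulation in (iii), whereas you work directly with the weak$^*$-compact convex state space; this is the same argument pushed forward under $f\mapsto (f(a),f(a^*a))$, and your Cauchy--Schwarz step in (i) even gives the sharper bound $h_a^{\mathfrak{A}}(\mu)\geq|\mu|^2$.
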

     \begin{proof}
       (i) Since $\mathscr{L}_{\mu}^{\mathfrak{A}}(a)\subseteq [0,\|a\|^2]$ for $a\in\mathfrak{A}$ and each $\mu\in W_{\mathfrak{A}}(a)$, it follows that $h_a^{\mathfrak{A}}(\mu)\geq0$.

      \noindent (ii) Let $\mu_1,\mu_2\in W_{\mathfrak{A}}(a)$ and $0\leq\lambda\leq1$. Take $r_1\in\mathscr{L}_{\mu_1}^{\mathfrak{A}}(a)$ and $r_2\in\mathscr{L}_{\mu_2}^{\mathfrak{A}}(a)$. Then $(\mu_1,r_1),(\mu_2,r_2)\in DW_{\mathfrak{A}}(a)$. Since $DW_{\mathfrak{A}}(a)$ is convex,
       \[\lambda(\mu_1,r_1)+(1-\lambda)(\mu_2,r_2)=(\lambda\mu_1+(1-\lambda)\mu_2,\lambda r_1+(1-\lambda)r_2)\in DW_{\mathfrak{A}}(a).\] Hence, $\lambda r_1+(1-\lambda)r_2\in\mathscr{L}_{\lambda\mu_1+(1-\lambda)\mu_2}^{\mathfrak{A}}(a)$. Therefore,
       \begin{eqnarray*}
         && \lambda r_1+(1-\lambda)r_2\leq h_a^{\mathfrak{A}}(\lambda\mu_1+(1-\lambda)\mu_2)\\
          &\implies&\lambda\sup_{r_1\in\mathscr{L}_{\mu_1}^{\mathfrak{A}}(a)}\mathscr{L}_{\mu_1}^{\mathfrak{A}}(a)+(1-\lambda)\sup_{r_2\in\mathscr{L}_{\mu_2}^{\mathfrak{A}}(a)}\mathscr{L}_{\mu_2}^{\mathfrak{A}}(a)\leq h_a^{\mathfrak{A}}(\lambda\mu_1+(1-\lambda)\mu_2)\\
          &\implies&\lambda h_a^{\mathfrak{A}}(\mu_1)+(1-\lambda) h_a^{\mathfrak{A}}(\mu_2)\leq h_a^{\mathfrak{A}}(\lambda\mu_1+(1-\lambda)\mu_2).
       \end{eqnarray*}

      \noindent (iii) To show that $h_a^{\mathfrak{A}}$ is upper semi-continuous, it is enough to prove that $\mathrm{hyp}(h_a^{\mathfrak{A}})$ is closed, where \[\mathrm{hyp}(h_a^{\mathfrak{A}})=\{(\mu,t)\in W_{\mathfrak{A}}(a)\times\mathbb{R}:t\leq h_a^{\mathfrak{A}}(\mu)\}\] denotes the  hypograph of $h_a^{\mathfrak{A}}$. Let $(\mu_n,t_n)\in\mathrm{hyp}(h_a^{\mathfrak{A}})$ be such that $(\mu_n,t_n)\to(\mu,t)$ in $W_{\mathfrak{A}}(a)\times\mathbb{R}$. We want to show that $t\leq h_a^{\mathfrak{A}}(\mu)$. Since $(\mu_n,t_n)\in\mathrm{hyp}(h_a^{\mathfrak{A}})$, by definition, $t_n\leq h_a^{\mathfrak{A}}(\mu_n)=\sup\mathscr{L}_{\mu_n}^{\mathfrak{A}}(a)$. Thus, for each $n$, we can choose $r_n\in\mathscr{L}_{\mu_n}^{\mathfrak{A}}(a)$ such that $r_n\geq t_n-\dfrac{1}{n}$. As, $r_n\in\mathscr{L}_{\mu_n}^{\mathfrak{A}}(a)$ for each $\mu_n\in W_{\mathfrak{A}}(a)$, we have $(\mu_n,r_n)\in DW_{\mathfrak{A}}(a)$. Since $DW_{\mathfrak{A}}(a)$ is compact, there exists a subsequence $(\mu_{n_k},r_{n_k})$ such that $(\mu_{n_k},r_{n_k})\to(\mu,r)$ with $(\mu,r)\in DW_{\mathfrak{A}}(a)$. Hence, $r\in\mathscr{L}_{\mu}^{\mathfrak{A}}(a)$. Also, from $r_{n_k}\geq t_{n_k}-\dfrac{1}{n_k}$, we obtain $r\geq t$. Hence, $h_a^{\mathfrak{A}}(\mu)\geq r \geq t$, which implies $t \leq h_a^{\mathfrak{A}}(\mu)$. Therefore, $(\mu,t)\in \mathrm{hyp}(h_a^{\mathfrak{A}})$, and consequently $\mathrm{hyp}(h_a^{\mathfrak{A}})$ is closed. This completes the proof.
     \end{proof}
     We now examine the relationship between $DW_{\mathfrak{A}}(a)$ and $DW_{\mathfrak{A}}(a^*)$. Although one may expect $DW_{\mathfrak{A}}(a^*)=\left\{(\overline{\mu},r):(\mu,r)\in DW_{\mathfrak{A}}(a)\right\}$, this is true for a normal element $a$ and fails in general, as the following example illustrates.
     \begin{example}
         Let $\mathfrak{A}=\mathscr{B}(\mathscr{H})$, be the algebra of all bounded linear operators in an infinite-dimensional Hilbert space $\mathscr{H}$. Also, consider the unilateral shift operator $S\in\mathscr{B}(\mathscr{H})$. It is observed in \cite[Example~3.5]{li2008davis} that 
         \[DW(S)=\left\{(\mu,1):\mu\in\mathbb{C},|\mu|<1\right\}\] and 
          \[DW(S^*)=\left\{(\mu,1):\mu\in\mathbb{C},|\mu|<1\right\}\bigcup\left\{(\mu,r):\mu\in\mathbb{C},|\mu|^2\leq r<1\right\}.\] Hence,\[DW_{\mathfrak{A}}(S)=\overline{DW(S)}=\left\{(\mu,1):\mu\in\mathbb{C},|\mu|\leq1\right\}\] and
\[DW_{\mathfrak{A}}(S^*)=\overline{DW(S^*)}=\left\{(\mu,r):\mu\in\mathbb{C},|\mu|^2\leq r\leq 1\right\}.\] 
          This implies $(0,0)\in DW_{\mathfrak{A}}(S^*)$, but $(0,0)\notin \left\{(\overline{\mu},r):(\mu,r)\in DW_{\mathfrak{A}}(S)\right\}$.
     \end{example}
     To compare $DW_{\mathfrak{A}}(a)$ and  $DW_{\mathfrak{A}}(a^*)$, it is enough to compare $\mathscr{L}_{\mu}^{\mathfrak{A}}(a)$ and $\mathscr{L}_{\overline{\mu}}^{\mathfrak{A}}(a^*)$. In this regard, we have the following result.
     \begin{theorem}\label{conv nt}
         Let $a\in\mathfrak{A}$. Then the following statements are equivalent:
         \begin{enumerate}
         \item [(i)] $DW_{\mathfrak{A}}(a^*)\subseteq \left\{(\overline{\mu},r):(\mu,r)\in DW_{\mathfrak{A}}(a)\right\}$.
         \item [(ii)] $\mathscr{L}_{\overline{\mu}}^{\mathfrak{A}}(a^*)\subseteq\mathscr{L}_{\mu}^{\mathfrak{A}}(a)$ for every $\mu\in W_{\mathfrak{A}}(a)$.
         \end{enumerate}
         As a result,
         \[DW_{\mathfrak{A}}(a^*)=\left\{(\overline{\mu},r):(\mu,r)\in DW_{\mathfrak{A}}(a)\right\}\] if and only if for every $f\in\mathfrak{S}(\mathfrak{A})$, there exist $\phi,\psi\in\mathfrak{S}(\mathfrak{A})$ such that the following conditions are satisfied:
         \[f(a)=\phi(a)\quad\mathrm{with}\quad\phi(a^*a)\leq f(aa^*)\quad \mathrm{and}\quad
f(a)=\psi(a)\quad\mathrm{with}\quad\psi(aa^*)\leq f(a^*a).\]
     \end{theorem}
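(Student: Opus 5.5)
The plan is to argue directly from the definitions of $DW_{\mathfrak{A}}$ and $\mathscr{L}^{\mathfrak{A}}_\mu$, using only the identity $f(a^*)=\overline{f(a)}$ for states. This identity gives $W_{\mathfrak{A}}(a^*)=\{\overline{\mu}:\mu\in W_{\mathfrak{A}}(a)\}$. Write $\widetilde{DW}(a)=\{(\overline{\mu},r):(\mu,r)\in DW_{\mathfrak{A}}(a)\}$. Membership $(\overline{\mu},r)\in\widetilde{DW}(a)$ is equivalent to $r\in\mathscr{L}^{\mathfrak{A}}_\mu(a)$.

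\textbf{Step 1: (i)$\implies$(ii).} Take $\mu\in W_{\mathfrak{A}}(a)$ and $r\in\mathscr{L}^{\mathfrak{A}}_{\overline{\mu}}(a^*)$. Then $(\overline{\mu},r)\in DW_{\mathfrak{A}}(a^*)\subseteq\widetilde{DW}(a)$, so $(\mu,r)\in DW_{\mathfrak{A}}(a)$, i.e.\ $r\in\mathscr{L}^{\mathfrak{A}}_\mu(a)$.

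\textbf{Step 2: (ii)$\implies$(i).} Take $(\nu,r)\in DW_{\mathfrak{A}}(a^*)$, realized by a state $f$. Put $\mu=\overline{\nu}=f(a)\in W_{\mathfrak{A}}(a)$. Then $r\in\mathscr{L}^{\mathfrak{A}}_{\overline{\mu}}(a^*)\subseteq\mathscr{L}^{\mathfrak{A}}_{\mu}(a)$, so $(\mu,r)\in DW_{\mathfrak{A}}(a)$ and $(\nu,r)=(\overline{\mu},r)\in\widetilde{DW}(a)$. The argument is symmetric to the proof of the earlier inclusion theorem. Applying Steps 1--2 with $a$ and $a^*$ interchanged, and using $(a^*)^*=a$, shows that the reverse inclusion $\widetilde{DW}(a)\subseteq DW_{\mathfrak{A}}(a^*)$ is equivalent to $\mathscr{L}^{\mathfrak{A}}_\mu(a)\subseteq\mathscr{L}^{\mathfrak{A}}_{\overline{\mu}}(a^*)$ for all $\mu$. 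Hence equality of the shells is equivalent to $\mathscr{L}^{\mathfrak{A}}_\mu(a)=\mathscr{L}^{\mathfrak{A}}_{\overline{\mu}}(a^*)$ for every $\mu\in W_{\mathfrak{A}}(a)$.

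\textbf{Step 3: the state-wise characterization.} By the Remark, both sets are compact intervals. Their equality is therefore equivalent to equality of the two endpoints. The upper endpoints always agree by Theorem~\ref{supL}, so everything reduces to the lower endpoints: $g_a^{\mathfrak{A}}(\mu)=g_{a^*}^{\mathfrak{A}}(\overline{\mu})$ for all $\mu$.

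For necessity, assume the intervals are equal and fix a state $f$ with $\mu=f(a)$. Then $f(aa^*)\in\mathscr{L}^{\mathfrak{A}}_{\overline{\mu}}(a^*)=\mathscr{L}^{\mathfrak{A}}_\mu(a)$, so some state $\phi$ satisfies $\phi(a)=\mu$ and $\phi(a^*a)=f(aa^*)$; in particular $\phi(a^*a)\le f(aa^*)$. Symmetrically, $f(a^*a)\in\mathscr{L}^{\mathfrak{A}}_{\overline{\mu}}(a^*)$ yields $\psi$ with $\psi(a)=\mu$ and $\psi(aa^*)= f(a^*a)$. Here $\psi(a)=\mu$ is the same as $\psi(a^*)=\overline{\mu}$.

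For sufficiency, fix $\mu$ and use compactness of $\mathfrak{S}(\mathfrak{A})$ in the weak$^*$ topology to choose $f$ attaining the minimum $g_{a^*}^{\mathfrak{A}}(\overline{\mu})=f(aa^*)$ with $f(a)=\mu$. The hypothesis gives $\phi$ with $\phi(a)=\mu$ and $\phi(a^*a)\le f(aa^*)$, so $g_a^{\mathfrak{A}}(\mu)\le g_{a^*}^{\mathfrak{A}}(\overline{\mu})$. Choosing instead $f$ attaining $g_a^{\mathfrak{A}}(\mu)$ and using $\psi$ gives the reverse inequality. With the upper endpoints equal, the intervals coincide.

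The main obstacle is Step 3, specifically recognizing that the inequality-type condition controls only the lower endpoints. One must invoke Theorem~\ref{supL} and the interval structure from the Remark to recover full equality of the fibres. Without these, a point in the middle of $\mathscr{L}^{\mathfrak{A}}_{\overline{\mu}}(a^*)$ would not obviously lie in $\mathscr{L}^{\mathfrak{A}}_\mu(a)$.
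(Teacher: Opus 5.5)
Your proposal is correct and follows essentially the same route as the paper: the fibre decomposition for (i)$\iff$(ii), the direct construction of $\phi,\psi$ (with equality) for necessity, and for sufficiency a comparison of the lower endpoints of the intervals $\mathscr{L}^{\mathfrak{A}}_{\mu}(a)$ and $\mathscr{L}^{\mathfrak{A}}_{\overline{\mu}}(a^*)$, with the upper endpoints supplied by Theorem~\ref{supL}. The differences are cosmetic: you select a minimizing state by weak$^*$-compactness where the paper shows $\inf\mathscr{L}^{\mathfrak{A}}_{\mu}(a)\le r$ for every $r\in\mathscr{L}^{\mathfrak{A}}_{\overline{\mu}}(a^*)$, and you spell out the reverse-inclusion version of (i)$\iff$(ii) that the paper uses tacitly.
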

     \begin{proof}
         (i)$\iff$(ii): Note that
         \[DW_{\mathfrak{A}}(a)=\bigcup_{\mu\in W_{\mathfrak{A}}(a)}\left\{(\mu,r):r\in\mathscr{L}_{\mu}^{\mathfrak{A}}(a)\right\}\] and
         \[DW_{\mathfrak{A}}(a^*)=\bigcup_{\mu\in W_{\mathfrak{A}}(a)}\left\{(\overline{\mu},r):r\in\mathscr{L}_{\overline{\mu}}^{\mathfrak{A}}(a^*)\right\}.\] This follows that $DW_{\mathfrak{A}}(a^*)\subseteq \left\{(\overline{\mu},r):(\mu,r)\in DW_{\mathfrak{A}}(a)\right\}$ if and only if $\mathscr{L}_{\overline{\mu}}^{\mathfrak{A}}(a^*)\subseteq\mathscr{L}_{\mu}^{\mathfrak{A}}(a)$ for every $\mu\in W_{\mathfrak{A}}(a)$.

         Suppose $DW_{\mathfrak{A}}(a^*)=\left\{(\overline{\mu},r):(\mu,r)\in DW_{\mathfrak{A}}(a)\right\}$, or equivalently $DW_{\mathfrak{A}}(a)=\left\{(\mu,r):(\overline{\mu},r)\in DW_{\mathfrak{A}}(a^*)\right\}$. Then $f(a^*)=\overline{\mu}$ and $f(aa^*)=r$ for some $f\in\mathfrak{S}(\mathfrak{A})$. Then $(\overline{\mu},r)\in DW_{\mathfrak{A}}(a^*)$ implies $(\mu,r)\in DW_{\mathfrak{A}}(a)$, where $\mu=f(a)$. Hence, $r\in\mathscr{L}_{\mu}^{\mathfrak{A}}(a)$. Thus, there exists $\phi\in\mathfrak{S}(\mathfrak{A})$ such that $\phi(a)=f(a)$ and $\phi(a^*a)=r=f(aa^*)$. On the other hand, if $r'=f(a^*a)$, then $(\mu,r')\in DW_{\mathfrak{A}}(a)$, hence, $(\overline{\mu},r')\in DW_{\mathfrak{A}}(a^*)$ implies $r'\in\mathscr{L}_{\overline{\mu}}^{\mathfrak{A}}(a^*)$. Then there exists $\psi\in\mathfrak{S}(\mathfrak{A})$ such that $f(a)=\psi(a)$ and $\psi(aa^*)=r'=f(a^*a)$.

          Conversely, for any $f\in\mathfrak{S}(\mathfrak{A})$, let $\phi,\psi\in\mathfrak{S}(\mathfrak{A})$ be such that the above mentioned conditions are satisfied. Let $\mu\in W_{\mathfrak{A}}(a)$. Take any $r\in\mathscr{L}_{\overline{\mu}}^{\mathfrak{A}}(a^*)$. Then we have $\mu=f(a)$ and $f(aa^*)=r$ for some $f\in\mathfrak{S}(\mathfrak{A})$. By assumption, there exist $\phi\in\mathfrak{S}(\mathfrak{A})$ such that $\phi(a)=f(a)=\mu$ and $\phi(a^*a)\leq r=f(aa^*)$. Let $s=\phi(a^*a)$. Then $s\in\mathscr{L}_{\mu}^{\mathfrak{A}}(a)$ and $s\leq r$. Hence, for every $r\in\mathscr{L}_{\overline{\mu}}^{\mathfrak{A}}(a^*)$, there exists $s\in\mathscr{L}_{\mu}^{\mathfrak{A}}(a)$ such that $s\leq r$. Thus,
         \[
             \inf\mathscr{L}_{\mu}^{\mathfrak{A}}(a)\leq r\quad \mathrm{for}\ \mathrm{all}\ r\in\mathscr{L}_{\overline{\mu}}^{\mathfrak{A}}(a^*)
             \implies \inf\mathscr{L}_{\mu}^{\mathfrak{A}}(a)\leq\inf\mathscr{L}_{\overline{\mu}}^{\mathfrak{A}}(a^*).
         \] Similarly, considering the another condition, we can get $\inf\mathscr{L}_{\overline{\mu}}^{\mathfrak{A}}(a^*)\leq\inf\mathscr{L}_{\mu}^{\mathfrak{A}}(a)$. Thus, $\inf\mathscr{L}_{\mu}^{\mathfrak{A}}(a)=\inf\mathscr{L}_{\overline{\mu}}^{\mathfrak{A}}(a^*)$. Also, from Theorem~\ref{supL} we get $\sup\mathscr{L}_{\mu}^{\mathfrak{A}}(a)=\sup\mathscr{L}_{\overline{\mu}}^{\mathfrak{A}}(a^*)$. Hence, $\mathscr{L}_{\mu}^{\mathfrak{A}}(a)=\mathscr{L}_{\overline{\mu}}^{\mathfrak{A}}(a^*)$ for each $\mu\in W_{\mathfrak{A}}(a)$. Therefore, $DW_{\mathfrak{A}}(a^*)=\left\{(\overline{\mu},r):(\mu,r)\in DW_{\mathfrak{A}}(a)\right\}$.
     \end{proof}
     We now describe the behaviour of the upper boundary of $DW_{\mathfrak{A}}(a)$ under taking adjoints.
     \begin{theorem}
         Let $a\in\mathfrak{A}$. Then
         \[\partial_{+}DW_{\mathfrak{A}}(a^*)=\left\{(\overline{\mu},h_a^{\mathfrak{A}}(\mu)):(\mu,h_a^{\mathfrak{A}}(\mu))\in\partial_{+}DW_{\mathfrak{A}}(a)\right\}.\]
     \end{theorem}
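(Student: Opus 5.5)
The argument is a direct consequence of Theorem~\ref{supL} together with the definition of the upper boundary as the graph of the height function. The plan has three steps.

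First, I identify the domain of $h_{a^*}^{\mathfrak{A}}$. Every state $f$ satisfies $f(a^*)=\overline{f(a)}$, so
\[
W_{\mathfrak{A}}(a^*)=\{\overline{\mu}:\mu\in W_{\mathfrak{A}}(a)\}.
\]
Thus $\mu\mapsto\overline{\mu}$ is a bijection from $W_{\mathfrak{A}}(a)$ onto $W_{\mathfrak{A}}(a^*)$. By definition,
\[
\partial_{+}DW_{\mathfrak{A}}(a^*)=\{(\nu,h_{a^*}^{\mathfrak{A}}(\nu)):\nu\in W_{\mathfrak{A}}(a^*)\},
\]
and after reparametrizing with $\nu=\overline{\mu}$ this becomes
\[
\partial_{+}DW_{\mathfrak{A}}(a^*)=\{(\overline{\mu},h_{a^*}^{\mathfrak{A}}(\overline{\mu})):\mu\in W_{\mathfrak{A}}(a)\}.
\]

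Second, I invoke Theorem~\ref{supL}, which gives $h_{a^*}^{\mathfrak{A}}(\overline{\mu})=h_a^{\mathfrak{A}}(\mu)$ for every $\mu\in W_{\mathfrak{A}}(a)$. Substituting this into the second coordinate yields
\[
\partial_{+}DW_{\mathfrak{A}}(a^*)=\{(\overline{\mu},h_a^{\mathfrak{A}}(\mu)):\mu\in W_{\mathfrak{A}}(a)\}.
\]
The condition $\mu\in W_{\mathfrak{A}}(a)$ is exactly the condition $(\mu,h_a^{\mathfrak{A}}(\mu))\in\partial_{+}DW_{\mathfrak{A}}(a)$, so this is the claimed set. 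For a clean write-up I would present both inclusions explicitly: a point of the left side has the form $(\nu,h_{a^*}^{\mathfrak{A}}(\nu))$, and I write $\nu=\overline{\mu}$; conversely, each $(\overline{\mu},h_a^{\mathfrak{A}}(\mu))$ equals $(\overline{\mu},h_{a^*}^{\mathfrak{A}}(\overline{\mu}))$ and hence lies on the upper boundary of $DW_{\mathfrak{A}}(a^*)$.

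The only step needing care, and the one I expect to be the main obstacle, is whether Theorem~\ref{supL} is genuinely proved pointwise in $\mu$. Its written proof only compares the global suprema $\|a^*a\|=\|aa^*\|$, which does not yield the fiberwise equality. If I cannot rely on it as stated, I would try to prove $\sup\{f(a^*a): f(a)=\mu\}=\sup\{f(aa^*): f(a)=\mu\}$ directly. The natural tool is a GNS/polar-decomposition argument via Theorem~\ref{gns}, using that $a^*a$ and $aa^*$ are unitarily equivalent off their kernels. The unilateral shift example already casts doubt on the fiberwise claim: there the lower boundaries differ, and the upper boundaries agree only because $h\equiv 1$ on the whole disc. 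So if the direct argument fails, I would cite Theorem~\ref{supL} as the paper states it and note that the proof above is entirely a formal consequence of that result.
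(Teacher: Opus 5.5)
Your derivation is the same as the paper's: reparametrize $W_{\mathfrak{A}}(a^*)$ by $\nu=\overline{\mu}$, then substitute $h_{a^*}^{\mathfrak{A}}(\overline{\mu})=h_a^{\mathfrak{A}}(\mu)$ from Theorem~\ref{supL}. Granting that theorem, the two inclusions you describe complete the proof.

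Your concern about Theorem~\ref{supL} is well placed. Its written proof only shows $\|a^*a\|=\|aa^*\|$, which says the \emph{maxima} of the two height functions agree, and that does not give the fiberwise identity. The identity is nevertheless true. The shift example is no evidence against it, since it only shows that the \emph{lower} boundaries can differ. The polar-decomposition route is not the right tool: polar decomposition is unavailable in a general $C^*$-algebra, and conjugating by the partial isometry does not preserve the constraint $f(a)=\mu$.

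The missing idea is to apply the global comparison to every translate $a+\beta$, $\beta\in\mathbb{C}$. We have $(a+\beta)^*(a+\beta)=a^*a+\overline{\beta}a+\beta a^*+|\beta|^2$ and $(a+\beta)(a+\beta)^*=aa^*+\overline{\beta}a+\beta a^*+|\beta|^2$. Also $f(\overline{\beta}a+\beta a^*)=2\Re(\overline{\beta}f(a))$, and $f(a)=\mu$ if and only if $f(a^*)=\overline{\mu}$. Splitting the supremum over states according to the value $\mu=f(a)$ therefore gives, for every $\beta\in\mathbb{C}$,
\begin{align*}
\max_{\mu\in W_{\mathfrak{A}}(a)}\bigl(h_a^{\mathfrak{A}}(\mu)+2\Re(\overline{\beta}\mu)\bigr)
&=\|a+\beta\|^2-|\beta|^2\\
&=\max_{\mu\in W_{\mathfrak{A}}(a)}\bigl(h_{a^*}^{\mathfrak{A}}(\overline{\mu})+2\Re(\overline{\beta}\mu)\bigr).
\end{align*}

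Both $\mu\mapsto h_a^{\mathfrak{A}}(\mu)$ and $\mu\mapsto h_{a^*}^{\mathfrak{A}}(\overline{\mu})$ are concave and upper semicontinuous on the compact convex set $W_{\mathfrak{A}}(a)$. This follows from the paper's height-function theorem applied to $a$ and to $a^*$, since conjugation is real-linear. Every real-linear functional on $\mathbb{C}$ has the form $\mu\mapsto 2\Re(\overline{\beta}\mu)$. Hence the two functions have the same concave conjugate, and by Fenchel--Moreau they coincide; equivalently, their closed convex hypographs have the same support function. With this in place, your formal argument, like the paper's, is fully justified.
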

     \begin{proof}
     It is known that $W_{\mathfrak{A}}(a^*)=\overline{W_{\mathfrak{A}}(a)}$ for all $a\in\mathfrak{A}$. Now, we have  
     \begin{eqnarray*}
    \partial_{+}DW_{\mathfrak{A}}(a^*)&=&\left\{(\nu,\ h_a^{{\prime}\mathfrak{A}}(\nu))\in\mathbb{C}\times\mathbb{R}:\nu\in W_{\mathfrak{A}}(a^*),h_a^{{\prime}\mathfrak{A}}(\nu)=\sup\mathscr{L}_{\nu}^{\mathfrak{A}}(a^*)\right\}\\
    &=&\left\{(\overline{\mu},\ h_a^{{\prime}\mathfrak{A}}(\overline{\mu}))\in\mathbb{C}\times\mathbb{R}:\overline{\mu}\in \overline{W_{\mathfrak{A}}(a)},h_a^{{\prime}\mathfrak{A}}(\overline{\mu})=\sup\mathscr{L}_{\overline{\mu}}^{\mathfrak{A}}(a^*)\right\}\\
    &=&\left\{(\overline{\mu},\ h_a^{\mathfrak{A}}({\mu}))\in\mathbb{C}\times\mathbb{R}:\mu\in W_{\mathfrak{A}}(a),h_a^{\mathfrak{A}}(\mu)=\sup\mathscr{L}_{\mu}^{\mathfrak{A}}(a)\right\}\quad\text{(by Theorem~\ref{supL})}.
     \end{eqnarray*} Hence, the result follows.
     \end{proof}
     The following corollary is immediate.
     \begin{corollary}
Let $a\in\mathfrak{A}$. If $W_{\mathfrak{A}}(a)$ is symmetric about the real axis, then
\[
\partial_{+}DW_{\mathfrak{A}}(a)=\partial_{+}DW_{\mathfrak{A}}(a^*).
\]
\end{corollary}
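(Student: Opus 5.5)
The plan is to derive the corollary directly from the preceding theorem on $\partial_{+}DW_{\mathfrak{A}}(a^*)$, together with Theorem~\ref{supL}. Both upper boundaries are graphs of height functions, so the task reduces to two checks: that the two graphs live over the same base set, and that the height functions agree on it.

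First, compare the domains. Since $W_{\mathfrak{A}}(a^*)=\{\overline{\mu}:\mu\in W_{\mathfrak{A}}(a)\}$, the hypothesis that $W_{\mathfrak{A}}(a)$ is symmetric about the real axis gives $W_{\mathfrak{A}}(a^*)=W_{\mathfrak{A}}(a)$. Hence $\partial_{+}DW_{\mathfrak{A}}(a)$ is the graph of $h_a^{\mathfrak{A}}$ and $\partial_{+}DW_{\mathfrak{A}}(a^*)$ is the graph of $h_{a^*}^{\mathfrak{A}}$, both over the common set $W_{\mathfrak{A}}(a)$. It therefore suffices to show $h_{a^*}^{\mathfrak{A}}(\nu)=h_a^{\mathfrak{A}}(\nu)$ for every $\nu\in W_{\mathfrak{A}}(a)$.

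Second, rewrite the target using the preceding theorem. It gives $\partial_{+}DW_{\mathfrak{A}}(a^*)=\{(\overline{\mu},h_a^{\mathfrak{A}}(\mu)):\mu\in W_{\mathfrak{A}}(a)\}$. Reparametrize by $\nu=\overline{\mu}$; by the symmetry just established, $\nu$ again ranges over all of $W_{\mathfrak{A}}(a)$. The set then becomes $\{(\nu,h_a^{\mathfrak{A}}(\overline{\nu})):\nu\in W_{\mathfrak{A}}(a)\}$. Comparing with $\partial_{+}DW_{\mathfrak{A}}(a)=\{(\nu,h_a^{\mathfrak{A}}(\nu))\}$, the corollary is equivalent to $h_a^{\mathfrak{A}}(\overline{\nu})=h_a^{\mathfrak{A}}(\nu)$ on $W_{\mathfrak{A}}(a)$.

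The main obstacle is this last identity, the evenness of $h_a^{\mathfrak{A}}$ under conjugation. My first attempt would reuse the mechanism in the proof of Theorem~\ref{supL}. That proof identifies the relevant suprema of $f(a^*a)$ and $f(aa^*)$ with $\|a^*a\|=\|aa^*\|=\|a\|^2$. Applying Theorem~\ref{supL} to $a$ at $\nu$ and to $a^*$ at $\overline{\nu}$ then links $h_a^{\mathfrak{A}}(\nu)$, $h_{a^*}^{\mathfrak{A}}(\overline{\nu})$ and $h_{a^*}^{\mathfrak{A}}(\nu)$, and the symmetric domain allows both evaluations to be made. Chaining these equalities should close the loop.

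I flag this step as the delicate one. Symmetry of $W_{\mathfrak{A}}(a)$ alone constrains only the first coordinate. The argument therefore depends on exactly how much Theorem~\ref{supL} delivers pointwise, as opposed to only at the level of global suprema. If the chain does not close, the fallback is to verify the identity by checking the extreme values: $h_a^{\mathfrak{A}}$ is concave and upper semi-continuous, so it would suffice to compare it with its reflection at the boundary points of the symmetric convex set $W_{\mathfrak{A}}(a)$.
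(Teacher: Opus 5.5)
Your reduction is right, and it is all the paper's ``immediate'' derivation contains. Since $W_{\mathfrak{A}}(a^*)=\{\overline{\mu}:\mu\in W_{\mathfrak{A}}(a)\}=W_{\mathfrak{A}}(a)$, the preceding theorem turns the claim into $h_a^{\mathfrak{A}}(\overline{\nu})=h_a^{\mathfrak{A}}(\nu)$ on $W_{\mathfrak{A}}(a)$. The gap is that this identity cannot be obtained.

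Theorem~\ref{supL}, applied to $a$ or to $a^*$ at any admissible point, only yields $h_a^{\mathfrak{A}}(\nu)=h_{a^*}^{\mathfrak{A}}(\overline{\nu})$ and $h_{a^*}^{\mathfrak{A}}(\nu)=h_a^{\mathfrak{A}}(\overline{\nu})$. Each use swaps $a\leftrightarrow a^*$ together with $\nu\leftrightarrow\overline{\nu}$, never $\nu\leftrightarrow\overline{\nu}$ alone, so no chain links $h_a^{\mathfrak{A}}(\nu)$ to $h_a^{\mathfrak{A}}(\overline{\nu})$. Your suspicion about that theorem is also justified: its printed proof only compares the unconstrained suprema $\|a^*a\|=\|aa^*\|$. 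The pointwise statement is nevertheless true, because $h_a^{\mathfrak{A}}$ and $\nu\mapsto h_{a^*}^{\mathfrak{A}}(\overline{\nu})$ have the same concave conjugate $z\mapsto\|a-z\|^2-|z|^2$; but even pointwise it does not give evenness. Your fallback fails as well. A concave upper semicontinuous function is not determined by its boundary values: on a disc, the cones with apex over $\mu_0$ and over $\overline{\mu_0}$ agree on the boundary but not inside.

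In fact the identity, and with it the corollary, is false, so nothing can close this step. Take $\mathfrak{A}=\mathfrak{M}_6(\mathbb{C})$ and $a=\mathrm{diag}(1,-1,i,-i)\oplus B$ with $B=\begin{pmatrix} i/10 & 6/5\\ 0 & i/10\end{pmatrix}$.

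\emph{Symmetric numerical range.} $W_{\mathfrak{A}}(B)$ is the disc with centre $i/10$ and radius $3/5$. Its support function $\tfrac{1}{10}\sin\theta+\tfrac35\le\tfrac{7}{10}<\tfrac{1}{\sqrt2}$ is dominated by that of the square $Q=\mathrm{conv}\{\pm1,\pm i\}$. Hence $W_{\mathfrak{A}}(a)=Q$, which is symmetric about the real axis.

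\emph{Lower bound for $h_a^{\mathfrak{A}}(i/10)$.} The vector state at the last basis vector $e_6$ gives $f(a)=i/10$ and $f(a^*a)=\tfrac{36}{25}+\tfrac{1}{100}=\tfrac{145}{100}$. So $h_a^{\mathfrak{A}}(i/10)\ge\tfrac{145}{100}$.

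\emph{Upper bound for $h_{a^*}^{\mathfrak{A}}(i/10)$.} Put $z=i/10$. Then $\|a-z\|=\max\{\tfrac{\sqrt{101}}{10},\tfrac{9}{10},\tfrac{11}{10},\tfrac65\}=\tfrac65$. Every state with $f(a^*)=i/10$, i.e.\ $f(a)=-i/10$, satisfies
\[
f(aa^*)=f\big((a-z)(a-z)^*\big)+2\Re\big(\overline{z}f(a)\big)-|z|^2\le\tfrac{144}{100}-\tfrac{2}{100}-\tfrac{1}{100}=\tfrac{141}{100}.
\]
Hence $h_{a^*}^{\mathfrak{A}}(i/10)\le\tfrac{141}{100}<h_a^{\mathfrak{A}}(i/10)$. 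Both upper boundaries are graphs over $Q$, so they differ above $i/10$.

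The same duality shows what is actually true. $\partial_{+}DW_{\mathfrak{A}}(a)=\partial_{+}DW_{\mathfrak{A}}(a^*)$ holds if and only if $\|a-z\|=\|a-\overline{z}\|$ for all $z\in\mathbb{C}$. Symmetry of $W_{\mathfrak{A}}(a)$ is only what this condition gives as $|z|\to\infty$. The condition does hold, for example, when $a$ is normal with $W_{\mathfrak{A}}(a)$ symmetric.
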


\section{Estimations of the algebraic Davis--Wielandt radius}\label{sec3}

We begin this section by investigating several fundamental properties of the Davis--Wielandt radius for elements within a $C^*$-algebra. The following theorem demonstrates, among other properties, that the algebraic Davis--Wielandt radius fails to define a norm due to its behavior under scalar multiplication.

\begin{theorem}\label{dw-prop}\cite[Theorem~2.1]{hassaouy2024inequalities}
Let $\mathfrak{A}$ be a unital $C^*$-algebra, $a, b \in \mathfrak{A}$, and $f \in \mathfrak{S}(\mathfrak{A})$. Then:
\begin{enumerate}
    \item[\rm (i)] $dw_{\mathfrak{A}}(a) \geq 0$, and $dw_{\mathfrak{A}}(a) = 0$ if and only if $a = 0$.
    \item[\rm (ii)] For any $\alpha \in \mathbb{C}$,
    \[
    dw_{\mathfrak{A}}(\alpha a) 
    \begin{cases}
        \geq |\alpha| dw_{\mathfrak{A}}(a) & \text{if } |\alpha| > 1, \\
        = |\alpha| dw_{\mathfrak{A}}(a) & \text{if } |\alpha| = 1, \\
        \leq |\alpha| dw_{\mathfrak{A}}(a) & \text{if } |\alpha| < 1.
    \end{cases}
    \]
    \item[\rm (iii)] The radius $dw_{\mathfrak{A}}(\cdot)$ satisfies the sharp bounds
    \begin{equation}\label{dwinq}
    \max\{w_{\mathfrak{A}}(a), \|a\|^2\} \leq dw_{\mathfrak{A}}(a) \leq \sqrt{w_{\mathfrak{A}}^2(a) + \|a\|^4}.
    \end{equation}
    \item[\rm (iv)] Assuming $|\alpha| \neq 1$, then $dw_{\mathfrak{A}}(\alpha a) = |\alpha| dw_{\mathfrak{A}}(a)$ if and only if $a = 0$ or $\alpha = 0$.
    \item[\rm (v)] The subadditivity of $dw_{\mathfrak{A}}(\cdot)$ is controlled via
    \[
    dw_{\mathfrak{A}}(a+b) \leq \sqrt{2\big(dw_{\mathfrak{A}}(a) + dw_{\mathfrak{A}}(b)\big) + 4\big(dw_{\mathfrak{A}}(a) + dw_{\mathfrak{A}}(b)\big)^2}.
    \]
\end{enumerate}
\end{theorem}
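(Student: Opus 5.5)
The plan is to reduce every part to elementary facts about a single state $f\in\mathfrak{S}(\mathfrak{A})$ and then take suprema. Three standard facts will be used throughout. First, $\sup_f|f(a)|=w_{\mathfrak{A}}(a)$. Second, $\sup_f f(a^*a)=\|a^*a\|=\|a\|^2$, as already used in the proof of Theorem~\ref{supL}. Third, Lemma~\ref{cs inq} with $a$ replaced by $\boldsymbol{1}$ gives $|f(a)|^2\le f(a^*a)$. I also note that $f\mapsto |f(a)|^2+f(a^*a)^2$ is weak$^*$-continuous on the weak$^*$-compact set $\mathfrak{S}(\mathfrak{A})$, so the supremum defining $dw_{\mathfrak{A}}(a)$ is attained.

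\emph{Parts (i)--(iii).} For (i), nonnegativity is clear. If $dw_{\mathfrak{A}}(a)=0$, then $f(a^*a)=0$ for every state, so $\|a\|^2=\sup_f f(a^*a)=0$.

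For (ii), I expand
\[dw_{\mathfrak{A}}(\alpha a)^2=\sup_f\bigl(|\alpha|^2|f(a)|^2+|\alpha|^4 f(a^*a)^2\bigr)\]
and compare termwise with $|\alpha|^2(|f(a)|^2+f(a^*a)^2)$. Since $|\alpha|^4\gtreqless|\alpha|^2$ according as $|\alpha|\gtreqless1$, the three cases follow after taking suprema.

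For (iii), the lower bound holds because the supremum of a sum dominates the supremum of each summand, and the two summands have suprema $w_{\mathfrak{A}}(a)^2$ and $\|a\|^4$. The upper bound holds because the supremum of a sum is at most the sum of the suprema. For sharpness, $a=\boldsymbol{1}$ gives equality in the upper bound, and I would look for a suitable element to exhibit equality on the left.

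\emph{Part (iv).} This is the step needing real care, since we need strictness in (ii) whenever $a\neq0$ and $\alpha\neq0$. The converse direction is trivial.

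Suppose $|\alpha|>1$. Pick a state $f$ attaining $dw_{\mathfrak{A}}(a)$. Then
\[dw_{\mathfrak{A}}(\alpha a)^2\ge |\alpha|^2dw_{\mathfrak{A}}(a)^2+(|\alpha|^4-|\alpha|^2)f(a^*a)^2.\]
It remains to see $f(a^*a)>0$. If instead $f(a^*a)=0$, Cauchy--Schwarz forces $f(a)=0$, so $dw_{\mathfrak{A}}(a)=0$ and hence $a=0$ by (i), a contradiction.

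Suppose $0<|\alpha|<1$. Pick $g$ attaining $dw_{\mathfrak{A}}(\alpha a)$. Then
\[dw_{\mathfrak{A}}(\alpha a)^2=|\alpha|^2\bigl(|g(a)|^2+|\alpha|^2g(a^*a)^2\bigr)<|\alpha|^2\bigl(|g(a)|^2+g(a^*a)^2\bigr)\le|\alpha|^2dw_{\mathfrak{A}}(a)^2.\]
The strict inequality uses $g(a^*a)>0$, which follows by the same argument applied to $\alpha a\neq0$.

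\emph{Part (v).} Fix a state $f$. I use $|f(a+b)|^2\le 2|f(a)|^2+2|f(b)|^2$ together with the operator inequality $(a+b)^*(a+b)\le 2(a^*a+b^*b)$, which follows from $(a-b)^*(a-b)\ge0$. This gives $f((a+b)^*(a+b))\le 2f(a^*a)+2f(b^*b)$.

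Next I bound each quantity by the corresponding Davis--Wielandt radius via (iii):
\[|f(a)|^2\le f(a^*a)\le\|a\|^2\le dw_{\mathfrak{A}}(a),\]
and similarly for $b$. Hence
\[|f(a+b)|^2+f((a+b)^*(a+b))^2\le 2\bigl(dw_{\mathfrak{A}}(a)+dw_{\mathfrak{A}}(b)\bigr)+4\bigl(dw_{\mathfrak{A}}(a)+dw_{\mathfrak{A}}(b)\bigr)^2.\]
Taking the supremum over $f$ and then square roots yields (v).

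The main obstacle is part (iv), specifically the attainment of suprema and the positivity of $f(a^*a)$ at the maximizing state. Both are handled by weak$^*$ compactness and Lemma~\ref{cs inq}.
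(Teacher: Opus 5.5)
The paper gives no proof of this theorem (it is quoted from \cite{hassaouy2024inequalities}), so there is nothing in the text to match step by step. Judged on its own, your argument is correct, apart from one unfinished item discussed below.

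\begin{itemize}
\item Parts (i), (ii) and the two inequalities in (iii) are the right elementary comparisons of suprema over states.
\item In (iv) you correctly identify what strictness requires: attainment of the supremum, via weak$^*$-compactness of $\mathfrak{S}(\mathfrak{A})$, and $f(a^*a)>0$ at the maximising state, via Lemma~\ref{cs inq} and part (i).
\item In (v) the chain $|f(a)|^2\le f(a^*a)\le\|a\|^2\le dw_{\mathfrak{A}}(a)$ is exactly what produces the odd-looking linear term $2\bigl(dw_{\mathfrak{A}}(a)+dw_{\mathfrak{A}}(b)\bigr)$.
\end{itemize}

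The only part the paper re-derives itself is the right-hand inequality of (iii). It obtains this as the case $\lambda=0$ of Theorem~\ref{thm.3.14}, using a GNS representation and Lemma~\ref{inequ4}. Your observation that a supremum of a sum is at most the sum of the suprema does the same job in one line.

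The item you leave open is the sharpness of the left-hand inequality in (iii). No nonzero normaloid element can witness it, because for those $dw_{\mathfrak{A}}(a)=\|a\|\sqrt{1+\|a\|^2}>\max\{\|a\|,\|a\|^2\}$ (cf.\ Corollary~\ref{norm corol}). So the witness must be non-normaloid. The remark after the theorem uses $A=\begin{pmatrix}0&1\\0&0\end{pmatrix}\in\mathfrak{M}_2(\mathbb{C})$, and you can check it with your own tools:
\begin{itemize}
\item Put $E=\mathrm{diag}(1,0)$. Then $A=E^*A$ and $A^*A=\boldsymbol{1}-E$.
\item Lemma~\ref{cs inq} gives $|f(A)|^2\le f(E)\,f(A^*A)=(1-r)r$, where $r=f(A^*A)\in[0,1]$.
\item Hence $|f(A)|^2+r^2\le r\le 1$, and the vector state at $e_2$ attains the value $1$.
\end{itemize}
Thus $dw_{\mathfrak{A}}(A)=1=\|A\|^2=\max\{w_{\mathfrak{A}}(A),\|A\|^2\}$, with $w_{\mathfrak{A}}(A)=\tfrac12$. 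This completes the sharpness claim in (iii).
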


\begin{remark}
The inequalities established in \eqref{dwinq} are sharp. Consider $a = \boldsymbol{1}$, which yields $dw_{\mathfrak{A}}(a) = \sqrt{w_{\mathfrak{A}}(a)^2 + \|a\|^4} = \sqrt{2}$. Alternatively, letting $\mathfrak{A} = \mathfrak{M}_2(\mathbb{C})$ and choosing the nilpotent matrix $A = \begin{pmatrix} 0 & 1 \\ 0 & 0 \end{pmatrix}$, we find $dw_{\mathfrak{A}}(A) = \max\{w_{\mathfrak{A}}(A), \|A\|^2\} = 1$. Furthermore, the upper bound in \eqref{dwinq} becomes an equality if and only if $a$ is normaloid (see Corollary~\ref{norm corol}).
\end{remark}

Next, we establish that the algebraic Davis--Wielandt radius behaves well under convex combinations.

\begin{theorem}
Let $a, b \in \mathfrak{A}$. Then the mapping $dw_{\mathfrak{A}}(\cdot) : \mathfrak{A} \to \mathbb{R}$ is a convex function.
\end{theorem}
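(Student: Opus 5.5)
We prove convexity straight from the definition, working one state at a time. Fix $a,b\in\mathfrak{A}$, $\lambda\in[0,1]$, and set $c=\lambda a+(1-\lambda)b$. For a fixed $f\in\mathfrak{S}(\mathfrak{A})$ consider the vector $F_f(x)=\bigl(f(x),f(x^*x)\bigr)\in\mathbb{C}\times\mathbb{R}\simeq\mathbb{R}^3$. Then $dw_{\mathfrak{A}}(x)=\sup_f\|F_f(x)\|$, where $\|\cdot\|$ is the Euclidean norm. If we show that each map $x\mapsto\|F_f(x)\|$ is convex, then $dw_{\mathfrak{A}}$ is a pointwise supremum of convex functions and hence convex.

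\textbf{Step 1: the second coordinate.} The map $x\mapsto f(x^*x)$ is convex. Expand
\[
\lambda f(a^*a)+(1-\lambda)f(b^*b)-f(c^*c)=\lambda(1-\lambda)\,f\bigl((a-b)^*(a-b)\bigr)\ge0,
\]
which follows by direct expansion together with positivity of $f$. So $0\le f(c^*c)\le \lambda f(a^*a)+(1-\lambda)f(b^*b)$.

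\textbf{Step 2: the first coordinate.} The map $x\mapsto f(x)$ is linear, so $|f(c)|\le\lambda|f(a)|+(1-\lambda)|f(b)|$.

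\textbf{Step 3: combining.} Both coordinates of $F_f(c)$ are dominated in absolute value by the corresponding coordinates of the convex combination of the nonnegative vectors
\[
\bigl(|f(a)|,f(a^*a)\bigr)\quad\text{and}\quad\bigl(|f(b)|,f(b^*b)\bigr).
\]
The Euclidean norm on $\mathbb{R}^2$ is monotone on the positive quadrant, so
\begin{align*}
\sqrt{|f(c)|^2+f(c^*c)^2}
&\le\Bigl\|\lambda\bigl(|f(a)|,f(a^*a)\bigr)+(1-\lambda)\bigl(|f(b)|,f(b^*b)\bigr)\Bigr\|\\
&\le\lambda\sqrt{|f(a)|^2+f(a^*a)^2}+(1-\lambda)\sqrt{|f(b)|^2+f(b^*b)^2}.
\end{align*}
The second inequality is Minkowski's inequality. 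Each term on the right is at most $\lambda\, dw_{\mathfrak{A}}(a)$ and $(1-\lambda)\, dw_{\mathfrak{A}}(b)$ respectively. Taking the supremum over $f$ gives $dw_{\mathfrak{A}}(c)\le\lambda\, dw_{\mathfrak{A}}(a)+(1-\lambda)\, dw_{\mathfrak{A}}(b)$.

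The main obstacle is Step 1: the second coordinate is quadratic rather than linear in $x$, so convexity there is not automatic. The identity in Step 1 resolves it, and the remaining steps are routine monotonicity and triangle-inequality arguments. Note also that it matters that the second coordinate is nonnegative. This is what allows monotonicity of the norm to be applied without absolute values on $f(x^*x)$.
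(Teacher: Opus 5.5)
Your proof is correct and follows the paper's structure. Both arguments bound $|f(c)|$ and $f(c^*c)$ state by state by the corresponding convex combinations for $a$ and $b$. Both then use monotonicity of the Euclidean norm on the positive quadrant together with the triangle inequality, and take the supremum over $f\in\mathfrak{S}(\mathfrak{A})$.

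The only local difference is Step 1. The paper expands $c^*c$ and applies the Cauchy--Schwarz inequality $|f(a^*b)|^2\le f(a^*a)f(b^*b)$ to obtain $f(c^*c)\le\bigl(\lambda\sqrt{f(a^*a)}+(1-\lambda)\sqrt{f(b^*b)}\bigr)^2\le\lambda f(a^*a)+(1-\lambda)f(b^*b)$. Your identity $\lambda a^*a+(1-\lambda)b^*b-c^*c=\lambda(1-\lambda)(a-b)^*(a-b)$ gives the same bound more directly, and already as an operator inequality in $\mathfrak{A}$ before any state is applied.
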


\begin{proof}
Let $a, b \in \mathfrak{A}$ and $\lambda \in [0, 1]$. We use the convexity of the function $\phi(t) = t^{2n}$ for $n \in \mathbb{N}$. By the linearity and properties of states, it is immediate that
\begin{align*}
|f(\lambda a + (1-\lambda) b)| \leq \lambda |f(a)| + (1-\lambda)|f(b)|.
\end{align*}
Setting $c = \lambda a + (1-\lambda)b$ and using the Cauchy-Schwarz inequality for states, we have
\begin{align*}
|f(c^*c)| &= |f(\lambda^2 a^*a + \lambda(1-\lambda)b^*a + \lambda(1-\lambda)a^*b + (1-\lambda)^2 b^*b)| \\
&\leq \lambda^2 f(a^*a) + \lambda(1-\lambda)|f(b^*a)| + \lambda(1-\lambda)|f(a^*b)| + (1-\lambda)^2 f(b^*b) \\
&\leq \lambda^2 f(a^*a) + 2\lambda(1-\lambda)\sqrt{f(a^*a)}\sqrt{f(b^*b)} + (1-\lambda)^2 f(b^*b) \\
&= \left(\lambda\sqrt{f(a^*a)} + (1-\lambda)\sqrt{f(b^*b)}\right)^2 \\
&\leq \lambda f(a^*a) + (1-\lambda) f(b^*b).
\end{align*}
Treating these terms within the Euclidean norm structure of $\mathbb{R}^2$, it follows that
\begin{align*}
(|f(c)|, |f(c^*c)|) &\leq \lambda(|f(a)|, f(a^*a)) + (1-\lambda)(|f(b)|, f(b^*b)),
\end{align*}
and consequently
\begin{align*}
\sqrt{|f(c)|^2 + |f(c^*c)|^2} \leq \lambda\sqrt{|f(a)|^2 + f(a^*a)^2} + (1-\lambda)\sqrt{|f(b)|^2 + f(b^*b)^2}.
\end{align*}
Taking the supremum over all states $f \in \mathfrak{S}(\mathfrak{A})$, yields the desired convexity
\begin{align*}
dw_{\mathfrak{A}}(\lambda a + (1-\lambda)b) \leq \lambda dw_{\mathfrak{A}}(a) + (1-\lambda)dw_{\mathfrak{A}}(b).
\end{align*}
\end{proof}

\begin{lemma}\label{w1}\cite[Theorem~2.1]{hassaouy2024inequalities}
Let $a \in \mathfrak{A}$. Then $w_{\mathfrak{A}}(a, a^*a) = dw_{\mathfrak{A}}(a)$, where for an $n$-tuple $a = (a_1, \dots, a_n) \in \mathfrak{A}^n$, the joint numerical radius is defined by
\[
w_{\mathfrak{A}}(a_1, \dots, a_n) = \sup_{f \in \mathfrak{S}(\mathfrak{A})} \left( \sum_{i=1}^n |f(a_i)|^2 \right)^{\frac{1}{2}}.
\]
\end{lemma}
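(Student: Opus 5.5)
The statement $w_{\mathfrak{A}}(a,a^*a)=dw_{\mathfrak{A}}(a)$ reduces to comparing two suprema over the same index set $\mathfrak{S}(\mathfrak{A})$, so I plan to prove it by showing that the quantities under the two suprema agree state by state. By the definition of the joint numerical radius with $n=2$, $a_1=a$ and $a_2=a^*a$, we have
\[
w_{\mathfrak{A}}(a,a^*a)=\sup_{f\in\mathfrak{S}(\mathfrak{A})}\left(|f(a)|^2+|f(a^*a)|^2\right)^{\frac12}.
\]
The definition of $dw_{\mathfrak{A}}(a)$ instead uses $f(a^*a)^2$ in place of $|f(a^*a)|^2$.

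The one point that needs an argument is therefore that $|f(a^*a)|^2=f(a^*a)^2$ for every state $f$. Since $a^*a$ is a positive element of $\mathfrak{A}$ and $f$ is a positive linear functional, $f(a^*a)$ is a nonnegative real number. Alternatively, Lemma~\ref{cs inq} with $b=a$ gives $|f(a^*a)|^2\le f(a^*a)f(a^*a)$, which again forces $f(a^*a)\ge0$. Hence $|f(a^*a)|=f(a^*a)$, and squaring gives the identity.

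With this, for each $f\in\mathfrak{S}(\mathfrak{A})$,
\[
\left(|f(a)|^2+|f(a^*a)|^2\right)^{\frac12}=\sqrt{|f(a)|^2+f(a^*a)^2}.
\]
Taking the supremum over $f\in\mathfrak{S}(\mathfrak{A})$ on both sides yields $w_{\mathfrak{A}}(a,a^*a)=dw_{\mathfrak{A}}(a)$.

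I do not expect a real obstacle here. The only substantive point is the positivity of $f(a^*a)$, which holds by the definition of a state.
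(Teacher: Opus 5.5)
Your proof is correct. The paper gives no proof of its own here; it simply cites \cite[Theorem~2.1]{hassaouy2024inequalities}. Your argument is exactly the standard one: $f(a^*a)\ge 0$ for every state, hence $|f(a^*a)|^2=f(a^*a)^2$, and the two suprema agree state by state.

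One small caveat concerns your aside. Lemma~\ref{cs inq} with $b=a$ only shows that $f(a^*a)$ is real, not that it is nonnegative, since a negative real $z$ also satisfies $|z|^2\le z^2$. That lemma also presupposes positivity of $f$ anyway. This is harmless: realness alone already gives $|f(a^*a)|^2=f(a^*a)^2$, and your main positivity argument is complete.
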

The next result gives a relation between $dw_{\mathfrak{A}}$ and $w_{\mathfrak{A}}(a)$.
\begin{theorem}\label{conv}
    Let $a\in\mathfrak{A}$. Then $dw_{\mathfrak{A}}(a)=\sqrt{2}w_{\mathfrak{A}}(a)$ if $a$ is self-adjoint and idempotent.
\end{theorem}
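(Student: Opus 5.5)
If $a$ is self-adjoint and idempotent, then $a^*a=a^2=a$. Hence for every state $f$ we have $f(a^*a)=f(a)$, and $f(a)$ is real. The plan is to substitute this into the definition of $dw_{\mathfrak{A}}(a)$:
\[
dw_{\mathfrak{A}}(a)=\sup_{f\in\mathfrak{S}(\mathfrak{A})}\sqrt{|f(a)|^2+f(a)^2}=\sqrt{2}\,\sup_{f\in\mathfrak{S}(\mathfrak{A})}|f(a)|=\sqrt{2}\,w_{\mathfrak{A}}(a).
\]
Equivalently, one can use Lemma~\ref{w1}: $dw_{\mathfrak{A}}(a)=w_{\mathfrak{A}}(a,a^*a)=w_{\mathfrak{A}}(a,a)$, and the joint numerical radius of the pair $(a,a)$ equals $\sqrt{2}\,w_{\mathfrak{A}}(a)$ directly from its definition.

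The steps, in order, are as follows.

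First, verify $a^*a=a$, which holds because $a^*=a$ and $a^2=a$.

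Second, note that $f(a)\in[0,1]$ for every state $f$. Indeed, $a=a^*a\geq 0$, so $f(a)\ge 0$; moreover $f(a)\le\|a\|\le 1$, although this upper bound is not needed. It follows that $f(a^*a)^2=f(a)^2=|f(a)|^2$.

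Third, pull the constant $\sqrt{2}$ out of the supremum. This is legitimate because $t\mapsto\sqrt{2t^2}$ is monotone on $[0,\infty)$.

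There is no real obstacle here. The only point to check carefully is that the quantity $f(a^*a)$ appearing inside the square (rather than in absolute value) coincides with $|f(a)|$, and this is ensured by the positivity of $a$. As a sanity check, $a=\boldsymbol{1}$ gives $\sqrt{2}$, which agrees with the remark following Theorem~\ref{dw-prop}. One could also note that $w_{\mathfrak{A}}(a)=\|a\|\in\{0,1\}$ for a projection, so $dw_{\mathfrak{A}}(a)\in\{0,\sqrt{2}\}$.
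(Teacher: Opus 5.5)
Your proof is correct and is essentially the paper's argument: the paper also uses $a^*a=a^2=a$, writes $dw_{\mathfrak{A}}(a)=w_{\mathfrak{A}}(a,a)$ via Lemma~\ref{w1}, and pulls the factor $\sqrt{2}$ out of the supremum, which is the same computation as your direct substitution into the definition. For the step $f(a)^2=|f(a)|^2$, the fact that $f(a)$ is real already suffices, so the positivity of $a$ that you invoke is true but not needed.
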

  \begin{proof}
Since $a$ is self-adjoint and idempotent, we have $a=a^*=a^2.$ By Lemma~\ref{w1},
\[
dw_{\mathfrak A}(a) = w_{\mathfrak A}(a,a^*a) = w_{\mathfrak A}(a,a^2) = w_{\mathfrak A}(a,a).
\]
Hence,
\begin{align*}
dw_{\mathfrak A}(a)
&= \sup_{f\in \mathfrak S(\mathfrak A)} \Bigl( |f(a)|^2+|f(a)|^2 \Bigr)^{1/2} \\
&= \sqrt{2} \sup_{f\in \mathfrak S(\mathfrak A)} |f(a)| 
= \sqrt{2} w_{\mathfrak A}(a).
\end{align*}
\end{proof}

\begin{example}
The converse of Theorem~\ref{conv} does not hold in general.
Consider $a=-\mathbf{1}$. Then $a$ is self-adjoint, but it is not idempotent.
Further,
\[
w_{\mathfrak A}(a) = \sup_{f\in \mathfrak S(\mathfrak A)} |f(-\mathbf 1)| = 1.
\]
Also,
\begin{align*}
dw_{\mathfrak A}(a)
= \sup_{f\in \mathfrak S(\mathfrak A)} \Bigl( |f(-\mathbf 1)|^2+f(\mathbf 1)^2 \Bigr)^{1/2} = \sqrt{2}.
\end{align*}
Hence,
\[
dw_{\mathfrak A}(a) = \sqrt{2} w_{\mathfrak A}(a),
\]
although $a$ is not idempotent.
\end{example}

\begin{remark}
The above example shows that the converse of Theorem~\ref{conv} fails.
We note that, in the Hilbert space setting, Alomari \cite[Theorem 2.1]{alomari2023davis} claimed that
\[
dw(S)=\sqrt{2}w(S)
\]
if and only if $S$ is a self-adjoint idempotent operator.
The proof of the converse direction in \cite{alomari2023davis} uses the implication
\[
w_e(S,S^*S)=w_e(S,S^2) \quad\Longrightarrow\quad S^*S=S^2,
\]
where $w_e$ denotes the Euclidean operator radius. However, this implication not true in general. Indeed, consider
\[
S=
\begin{pmatrix}
1 & 0 \\
0 & i
\end{pmatrix}.
\]
Then
\[
S^*S=I, \qquad S^2=
\begin{pmatrix}
1 & 0 \\
0 & -1
\end{pmatrix},
\]
and therefore $S^*S\neq S^2$.
Let $x=(\alpha,\beta)\in \mathbb C^2$ with $|x|=1$, and set $t=|\alpha|^2$. Then $|\beta|^2=1-t$.
A straightforward computation gives
\[
|\langle Sx,x\rangle|^2 = t^2+(1-t)^2, \mbox{ and } \langle S^*Sx,x\rangle=1.
\]
Hence,
\[
w_e(S,S^*S)^2 = \sup_{0\le t\le 1} \Bigl( t^2+(1-t)^2+1 \Bigr) = 2.
\]
On the other hand,
\[\langle S^2x,x\rangle=t-(1-t)=2t-1,\] and therefore
\[
w_e(S,S^2)^2 = \sup_{0\le t\le 1} \Bigl( t^2+(1-t)^2+(2t-1)^2 \Bigr) = 2.
\]
Consequently,
\[
w_e(S,S^*S)=w_e(S,S^2).
\]
\end{remark}

\begin{lemma}\label{lemma3}\cite[pp.~75--76]{halmos1982hilbert}
		Let $T\in \mathscr{B}(\mathscr{H})$. Then
		\[
			|\langle Tx, y\rangle|^2\leq \langle|T|x, x\rangle\langle|T^*|y, y\rangle\qquad\ \mathrm{for}\ \mathrm{every}\ x, y\in\mathscr{H}.
		\]
	\end{lemma}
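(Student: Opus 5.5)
We will prove the mixed Schwarz inequality $|\langle Tx,y\rangle|^2\le\langle|T|x,x\rangle\langle|T^*|y,y\rangle$ from the polar decomposition of $T$ and the Cauchy--Schwarz inequality in $\mathscr{H}$.

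\emph{Step 1: polar decomposition.} Write $T=V|T|$, where $|T|=(T^*T)^{1/2}$ and $V$ is the partial isometry whose initial space is $\overline{\mathrm{ran}\,|T|}$. From this we need two facts:
\[
V^*V|T|=|T|, \qquad |T^*|=V|T|V^*.
\]
The first holds because $V^*V$ is the projection onto $\overline{\mathrm{ran}\,|T|}$. For the second, $TT^*=V|T|^2V^*=(V|T|V^*)^2$, and $V|T|V^*\ge0$, so uniqueness of the positive square root gives $|T^*|=V|T|V^*$.

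\emph{Step 2: split $|T|$ and apply Cauchy--Schwarz.} Write $|T|=|T|^{1/2}|T|^{1/2}$. Then
\[
\langle Tx,y\rangle=\langle V|T|^{1/2}|T|^{1/2}x,y\rangle=\langle |T|^{1/2}x,\,|T|^{1/2}V^*y\rangle .
\]
The Cauchy--Schwarz inequality in $\mathscr{H}$ gives
\[
|\langle Tx,y\rangle|^2\le \||T|^{1/2}x\|^2\,\||T|^{1/2}V^*y\|^2 .
\]

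\emph{Step 3: identify the two factors.} The first factor is $\langle|T|x,x\rangle$. The second is
\[
\langle V|T|V^*y,y\rangle=\langle|T^*|y,y\rangle
\]
by Step 1, which is the desired inequality.

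The only step that needs care is the identity $|T^*|=V|T|V^*$. It rests on $V^*V$ acting as the identity on $\mathrm{ran}\,|T|$; with that in hand, $V|T|V^*$ is a positive square root of $TT^*$, and uniqueness of the positive square root finishes it. Everything else is routine.
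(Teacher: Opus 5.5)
Your proof is correct. It is the standard argument from the Halmos reference the paper cites: polar decomposition $T=V|T|$, the identity $|T^*|=V|T|V^*$ obtained from $V^*V|T|=|T|$ and uniqueness of positive square roots, then Cauchy--Schwarz applied to $\langle |T|^{1/2}x,\,|T|^{1/2}V^*y\rangle$. The paper gives no proof of its own beyond that citation.
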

In what follows, we state the following lemma.
\begin{lemma}\label{inequ5}
Let $a\in \mathfrak{A}$, $f\in \mathfrak{S}(\mathfrak{A})$. Then
\begin{equation*}
	|f(a)|^2\leq f(|a|)f(|a^*|).
\end{equation*}
\end{lemma}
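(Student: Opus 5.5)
The plan is to transfer the mixed Schwarz inequality of Lemma~\ref{lemma3} from $\mathscr{B}(\mathscr{H})$ to an arbitrary state by means of the GNS construction (Theorem~\ref{gns}). Fix $a\in\mathfrak{A}$ and $f\in\mathfrak{S}(\mathfrak{A})$. By Theorem~\ref{gns} there are a representation $\pi:\mathfrak{A}\to\mathscr{B}(\mathscr{H})$ and a unit vector $\xi\in\mathscr{H}$ with $f(b)=\langle\pi(b)\xi,\xi\rangle$ for all $b\in\mathfrak{A}$. In particular,
\[
f(a)=\langle \pi(a)\xi,\xi\rangle ,
\]
and we will apply Lemma~\ref{lemma3} to $T=\pi(a)$ with $x=y=\xi$. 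This gives
\[
|f(a)|^2=|\langle \pi(a)\xi,\xi\rangle|^2\leq \langle |\pi(a)|\xi,\xi\rangle\,\langle |\pi(a)^*|\xi,\xi\rangle .
\]

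The remaining step, and the only one needing care, is to identify $|\pi(a)|$ with $\pi(|a|)$ and $|\pi(a)^*|$ with $\pi(|a^*|)$. Since $\pi$ is a $^*$-homomorphism, we have $\pi(a^*a)=\pi(a)^*\pi(a)$. Moreover, $\pi(|a|)$ is positive, being $\pi$ applied to a positive element, which equals $\pi(b^*b)$ with $b=|a|^{1/2}$. Its square is $\pi(|a|^2)=\pi(a^*a)=\pi(a)^*\pi(a)$. By uniqueness of positive square roots in $\mathscr{B}(\mathscr{H})$, it follows that $\pi(|a|)=|\pi(a)|$.

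Alternatively, one can invoke the fact that $^*$-homomorphisms commute with the continuous functional calculus on self-adjoint elements: approximate $t\mapsto\sqrt t$ uniformly on $[0,\|a\|^2]$ by polynomials and note that $\pi$ preserves polynomials in $a^*a$ and is continuous. The same argument applied to $a^*$ yields $\pi(|a^*|)=|\pi(a)^*|$.

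Substituting these identities, the right-hand side of the displayed inequality becomes $\langle\pi(|a|)\xi,\xi\rangle\,\langle\pi(|a^*|)\xi,\xi\rangle=f(|a|)\,f(|a^*|)$, which proves the lemma. I expect no real obstacle beyond the functional-calculus compatibility just described. Note also that the representation need not be unital or faithful; only the $^*$-homomorphism property and the fact that $\xi$ represents $f$ are used.
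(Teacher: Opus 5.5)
Your proposal is correct and follows the paper's proof: the GNS representation, then Lemma~\ref{lemma3} applied to $T=\pi(a)$ with $x=y=\xi$, then the identifications $|\pi(a)|=\pi(|a|)$ and $|\pi(a)^*|=\pi(|a^*|)$. Your uniqueness-of-positive-square-root argument for these identifications spells out what the paper covers with a one-line appeal to the continuous functional calculus.
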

\begin{proof} Let $a\in \mathfrak{A}$, $f\in \mathfrak{S}(\mathfrak{A})$. Then
\begin{eqnarray*}
		 |f(a)|^2 
		& =& |\langle \pi(a)\xi, \xi\rangle|^2\quad \text{(by Theorem~\ref{gns})} \\
		& \leq& \langle |\pi(a)|\xi, \xi\rangle\langle |\pi(a^*)|\xi, \xi\rangle\quad \text{(by Lemma~\ref{lemma3})} \\
		& = &\langle \pi(|a|)\xi, \xi\rangle\langle \pi(|a^*|)\xi, \xi\rangle\quad \text{(since from continuous function calculus $|\pi(a)|=\pi(|a|)$)} \\
		& =& f(|a|)f(|a^*|).
\end{eqnarray*}
\end{proof}
    In this section, we establish several inequalities of algebraic Davis--Wielandt radius of $a\in\mathfrak{A}$. To prove the next theorem we need the following lemmas.
\begin{lemma}\label{inequ1}\cite[p.148]{dragomir2007advances}
	For any $x,y,z \in \mathscr{H}$,
	\[
		|\langle x, y\rangle|^2+|\langle x, z\rangle|^2\leq \|x\|^2\sqrt{|\langle y, y\rangle|^2+2|\langle y, z\rangle|^2+|\langle z, z\rangle|^2}.
	\]
\end{lemma}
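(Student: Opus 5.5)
This is a Hilbert-space inequality of Bessel–Buzano type, and I would prove it directly from the Cauchy--Schwarz inequality. If $x=0$ both sides vanish, so assume $x\neq 0$.

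\emph{Step 1: reduce to a quadratic form.} Write
\[
|\langle x,y\rangle|^2+|\langle x,z\rangle|^2=\langle x,\alpha y+\beta z\rangle,\qquad \alpha=\langle y,x\rangle,\ \beta=\langle z,x\rangle,
\]
which is immediate since $\langle x,\alpha y\rangle=\overline{\alpha}\langle x,y\rangle=|\langle x,y\rangle|^2$, and likewise for the $z$ term. Setting $S=|\langle x,y\rangle|^2+|\langle x,z\rangle|^2=|\alpha|^2+|\beta|^2$, Cauchy--Schwarz gives
\[
S\le \|x\|\,\|\alpha y+\beta z\|,\qquad\text{so}\qquad S^2\le \|x\|^2\,\|\alpha y+\beta z\|^2 .
\]

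\emph{Step 2: bound the quadratic form.} Expand
\[
\|\alpha y+\beta z\|^2=|\alpha|^2\|y\|^2+|\beta|^2\|z\|^2+2\Re\bigl(\alpha\overline{\beta}\langle y,z\rangle\bigr),
\]
so that
\[
\|\alpha y+\beta z\|^2\le |\alpha|^2\|y\|^2+|\beta|^2\|z\|^2+2|\alpha||\beta||\langle y,z\rangle| .
\]
The right-hand side is $v^{T}Mv$ with $v=(|\alpha|,|\beta|)$ and
\[
M=\begin{pmatrix}\|y\|^2 & |\langle y,z\rangle|\\ |\langle y,z\rangle| & \|z\|^2\end{pmatrix}.
\]
Since $M$ is a real symmetric matrix, $v^TMv\le \|M\|_{\mathrm{op}}\,|v|^2\le \|M\|_{F}\,S$, where $\|M\|_F$ is the Frobenius norm. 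This gives
\[
\|M\|_F=\sqrt{\|y\|^4+2|\langle y,z\rangle|^2+\|z\|^4}=\sqrt{|\langle y,y\rangle|^2+2|\langle y,z\rangle|^2+|\langle z,z\rangle|^2}.
\]

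\emph{Step 3: combine.} From Steps 1 and 2,
\[
S^2\le \|x\|^2\,S\,\sqrt{|\langle y,y\rangle|^2+2|\langle y,z\rangle|^2+|\langle z,z\rangle|^2}.
\]
If $S=0$ the claim is trivial. Otherwise we may divide by $S$, which gives exactly the stated bound.

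The only nontrivial point is recognizing that the quadratic form in Step 2 is controlled by the operator norm of $M$, and hence by its Frobenius norm. That is precisely where the expression $\sqrt{\|y\|^4+2|\langle y,z\rangle|^2+\|z\|^4}$ appears. The remaining steps are routine algebra.
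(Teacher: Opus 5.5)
Your argument is correct and is essentially the standard proof of this Bombieri-type inequality in Dragomir's book; the paper itself gives no proof and only cites it. The standard argument writes the left side as an inner product with $x$, applies Cauchy--Schwarz, and controls the Gram form of $y,z$ by Cauchy--Schwarz on the double sum, which is exactly your $\|M\|_{\mathrm{op}}\le\|M\|_F$ step.

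One small slip in Step 1: with your convention $\langle x,\alpha y\rangle=\overline{\alpha}\langle x,y\rangle$ you must take $\alpha=\langle x,y\rangle$ and $\beta=\langle x,z\rangle$. Your choice $\alpha=\langle y,x\rangle$ gives $\langle x,y\rangle^2$, not $|\langle x,y\rangle|^2$. Since only $|\alpha|$ and $|\beta|$ are used afterwards, nothing else changes.
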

\begin{lemma}\label{inequ2}\cite[Lemma~2.9]{zamani2020some}
	For any $\alpha,\beta \in \mathbb{C}$, 
	\[
		\sup\limits_{|\gamma|^2+|\delta|^2\leq1}|\gamma\alpha+\delta\beta|^2=|\alpha|^2+|\beta|^2.
	\]
\end{lemma}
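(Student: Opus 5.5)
The claim is $\sup_{|\gamma|^2+|\delta|^2\le 1}|\gamma\alpha+\delta\beta|^2=|\alpha|^2+|\beta|^2$. I will prove it by showing the two inequalities separately: Cauchy--Schwarz gives the upper bound, and an explicit choice of $(\gamma,\delta)$ attains it.

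\emph{Upper bound.} Regard $(\gamma,\delta)$ and $(\overline{\alpha},\overline{\beta})$ as vectors in $\mathbb{C}^2$. Then $\gamma\alpha+\delta\beta$ is their standard inner product. By the Cauchy--Schwarz inequality in $\mathbb{C}^2$ (or Lemma~\ref{cs inq} applied in $\mathfrak{M}_2(\mathbb{C})$, though the elementary version suffices),
\[
|\gamma\alpha+\delta\beta|^2\le(|\gamma|^2+|\delta|^2)(|\alpha|^2+|\beta|^2)\le|\alpha|^2+|\beta|^2
\]
whenever $|\gamma|^2+|\delta|^2\le1$. Hence the supremum is at most $|\alpha|^2+|\beta|^2$.

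\emph{Attainment.} If $\alpha=\beta=0$, both sides are zero and there is nothing to prove. Otherwise set
\[
\gamma=\frac{\overline{\alpha}}{\sqrt{|\alpha|^2+|\beta|^2}},\qquad \delta=\frac{\overline{\beta}}{\sqrt{|\alpha|^2+|\beta|^2}}.
\]
Then $|\gamma|^2+|\delta|^2=1$, so $(\gamma,\delta)$ is admissible, and
\[
\gamma\alpha+\delta\beta=\frac{|\alpha|^2+|\beta|^2}{\sqrt{|\alpha|^2+|\beta|^2}}=\sqrt{|\alpha|^2+|\beta|^2}.
\]
Squaring gives exactly $|\alpha|^2+|\beta|^2$, so the supremum is attained (indeed it is a maximum) and equality holds.

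There is no real obstacle. The only points needing care are the degenerate case $\alpha=\beta=0$, where the normalization above is undefined, and placing the complex conjugates correctly in the choice of $\gamma,\delta$ so that the sum is real and nonnegative.
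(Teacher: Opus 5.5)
Your proof is correct and complete. Cauchy--Schwarz in $\mathbb{C}^2$ gives the upper bound, the choice $(\gamma,\delta)=(\overline{\alpha},\overline{\beta})/\sqrt{|\alpha|^2+|\beta|^2}$ attains it (so the supremum is a maximum), and the case $\alpha=\beta=0$ is trivial. The paper gives no proof of its own and simply imports the statement from \cite[Lemma~2.9]{zamani2020some}; your argument is the standard one, and the aside about Lemma~\ref{cs inq} in $\mathfrak{M}_2(\mathbb{C})$ can be dropped, since the elementary inequality is all you use.
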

\begin{lemma}\label{inequ3}\cite[Lemma~2.10]{zamani2020some}
	Let $S, R \in \mathscr{B}(\mathscr{H})$. For any $\gamma, \delta \in \mathbb{C}$,
	\[
		\|\gamma S+\delta R\|^2\leq (|\gamma|^2+|\delta|^2)\|S^*S+R^*R\|.
	\]
\end{lemma}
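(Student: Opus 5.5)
The statement is Lemma~\ref{inequ3}: $\|\gamma S+\delta R\|^2\leq(|\gamma|^2+|\delta|^2)\|S^*S+R^*R\|$ for $S,R\in\mathscr{B}(\mathscr{H})$ and $\gamma,\delta\in\mathbb{C}$. I would prove it pointwise on unit vectors, using the Cauchy--Schwarz inequality in $\mathbb{C}^2$ (in the same spirit as Lemma~\ref{inequ2}), and then take a supremum.

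First I would fix unit vectors $x,y\in\mathscr{H}$ and use the duality formula $\|T\|=\sup_{\|x\|=\|y\|=1}|\langle Tx,y\rangle|$. This gives
\[
|\langle(\gamma S+\delta R)x,y\rangle| = |\gamma\langle Sx,y\rangle+\delta\langle Rx,y\rangle| .
\]
Applying Cauchy--Schwarz in $\mathbb{C}^2$ to the vectors $(\gamma,\delta)$ and $(\langle Sx,y\rangle,\langle Rx,y\rangle)$, I would bound its square by
\[
(|\gamma|^2+|\delta|^2)\bigl(|\langle Sx,y\rangle|^2+|\langle Rx,y\rangle|^2\bigr).
\]

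The second step is to control $|\langle Sx,y\rangle|^2+|\langle Rx,y\rangle|^2$ independently of $y$. Since $\|y\|=1$, we have $|\langle Sx,y\rangle|\le\|Sx\|$ and $|\langle Rx,y\rangle|\le\|Rx\|$. Therefore the sum is at most
\[
\|Sx\|^2+\|Rx\|^2=\langle (S^*S+R^*R)x,x\rangle\le\|S^*S+R^*R\|.
\]
The last inequality holds because $S^*S+R^*R$ is positive, so its numerical radius equals its norm.

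Finally, I would combine the two bounds and take the supremum over unit $x$ and $y$. This yields $\|\gamma S+\delta R\|^2\le(|\gamma|^2+|\delta|^2)\|S^*S+R^*R\|$.

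Alternatively, the whole argument can be phrased more cleanly with the column operator $C=\begin{pmatrix}S\\R\end{pmatrix}:\mathscr{H}\to\mathscr{H}\oplus\mathscr{H}$. It satisfies $\|C\|^2=\|C^*C\|=\|S^*S+R^*R\|$. Writing $\gamma S+\delta R=(\gamma I,\ \delta I)\,C$ and noting that the row operator has norm $\sqrt{|\gamma|^2+|\delta|^2}$ gives the result by submultiplicativity.

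There is no real obstacle. The only point needing care is the identity $\sup_{\|x\|=1}\bigl(\|Sx\|^2+\|Rx\|^2\bigr)=\|S^*S+R^*R\|$, and the proof only uses the inequality direction of it, which follows from positivity of $S^*S+R^*R$.
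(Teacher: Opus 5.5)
Your proof is correct. Both versions you give are complete: the pointwise Cauchy--Schwarz estimate, and the factorization $\gamma S+\delta R=(\gamma I\ \ \delta I)\,C$ through the column operator $C$ with $C^*C=S^*S+R^*R$.

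The paper gives no proof of this lemma. It simply imports it as Lemma~2.10 of the cited work of Zamani, so there is no in-paper argument to compare against, and your argument supplies the standard self-contained justification.

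Two small simplifications are available:
\begin{enumerate}
\item The detour through the bilinear form $\langle\,\cdot\,x,y\rangle$ is unnecessary. Directly, for unit $x$,
\[
\|(\gamma S+\delta R)x\|\le|\gamma|\,\|Sx\|+|\delta|\,\|Rx\|\le\sqrt{|\gamma|^2+|\delta|^2}\,\bigl(\|Sx\|^2+\|Rx\|^2\bigr)^{1/2}.
\]
\item The final step $\langle(S^*S+R^*R)x,x\rangle\le\|S^*S+R^*R\|$ for unit $x$ is just Cauchy--Schwarz. You do not need the fact that the numerical radius of a positive operator equals its norm.
\end{enumerate}
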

\begin{theorem}\label{thm.3.10}
		Let $a\in \mathfrak{A}$, $f\in \mathfrak{S}(\mathfrak{A})$. Then
		\[
			dw_{\mathfrak{A}}^2(a)\leq (w_{\mathfrak{A}}(|a|^4+|a|^8)+2w_{\mathfrak{A}}^2(|a|^2a))^{\tfrac{1}{2}}.
		\]
\end{theorem}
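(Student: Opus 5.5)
The plan is to work with a single state $f\in\mathfrak{S}(\mathfrak{A})$, bound $|f(a)|^2+f(a^*a)^2$ pointwise, and then take the supremum over $f$. By Theorem~\ref{gns}, write $f(x)=\langle\pi(x)\xi,\xi\rangle$ with $\|\xi\|=1$, and set $T=\pi(a)$. Then
\[
f(a)=\langle T\xi,\xi\rangle \quad\text{and}\quad f(a^*a)=\langle T^*T\xi,\xi\rangle .
\]
Since both relevant quantities are inner products against $\xi$, we get
\[
|f(a)|^2+f(a^*a)^2=|\langle \xi,T\xi\rangle|^2+|\langle \xi,T^*T\xi\rangle|^2 .
\]
This is exactly the left-hand side of Lemma~\ref{inequ1} with $x=\xi$, $y=T\xi$ and $z=T^*T\xi$.

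Applying Lemma~\ref{inequ1} and using $\|\xi\|=1$ gives
\[
|f(a)|^2+f(a^*a)^2\le \Bigl(\|T\xi\|^4+2|\langle T\xi,T^*T\xi\rangle|^2+\|T^*T\xi\|^4\Bigr)^{1/2}.
\]
I then identify each of the three terms with a state value.

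\emph{Middle term.} We have $\langle T\xi,T^*T\xi\rangle=\langle T^*TT\xi,\xi\rangle=f(|a|^2a)$, so this term is at most $2w_{\mathfrak{A}}^2(|a|^2a)$.

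\emph{First term.} We have $\|T\xi\|^4=f(|a|^2)^2\le f(|a|^4)$ by Lemma~\ref{nw lm1} with $r=2$ applied to the positive element $|a|^2$.

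\emph{Last term.} We have $\|T^*T\xi\|^4=\bigl(\langle (T^*T)^2\xi,\xi\rangle\bigr)^2=f(|a|^4)^2\le f(|a|^8)$, again by Lemma~\ref{nw lm1} applied to $|a|^4$.

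Combining these bounds yields
\[
|f(a)|^2+f(a^*a)^2\le\bigl(f(|a|^4+|a|^8)+2|f(|a|^2a)|^2\bigr)^{1/2}.
\]
Since $|a|^4+|a|^8$ is positive, $f(|a|^4+|a|^8)\le w_{\mathfrak{A}}(|a|^4+|a|^8)$. Likewise $|f(|a|^2a)|\le w_{\mathfrak{A}}(|a|^2a)$. Taking the supremum over $f\in\mathfrak{S}(\mathfrak{A})$ on the left-hand side gives $dw_{\mathfrak{A}}^2(a)$ by definition, which proves the claim.

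The main obstacle is choosing the right vectors in Lemma~\ref{inequ1}. It has to be arranged so that the "cross term" $\langle y,z\rangle$ becomes a state evaluated at $|a|^2a$, and so that the diagonal terms are controlled by Lemma~\ref{nw lm1}. The remaining steps are routine bookkeeping.
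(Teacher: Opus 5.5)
Your proof is correct. Its first step is the paper's: the paper also applies Lemma~\ref{inequ1} in the GNS representation with $x=\xi$, $y=\pi(a)\xi$, $z=|\pi(a)|^2\xi$. It also treats the cross term exactly as you do, through $|f(|a|^2a)|\le w_{\mathfrak{A}}(|a|^2a)$.

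The two routes diverge on the diagonal part $f(|a|^2)^2+f(|a|^4)^2$.

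The paper replaces this part at once by its supremum over all unit vectors of the representation space. It rewrites that supremum with Lemma~\ref{inequ2} as $\sup_{|\gamma|^2+|\delta|^2\le 1}\sup_{\|\xi\|=1}|\langle(\gamma|\pi(a)|^2+\delta|\pi(a)|^4)\xi,\xi\rangle|^2$. It bounds this by $\|\gamma|\pi(a)|^2+\delta|\pi(a)|^4\|^2$, and then invokes Lemma~\ref{inequ3} to reach $\||a|^4+|a|^8\|=w_{\mathfrak{A}}(|a|^4+|a|^8)$.

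You instead keep the single state $f$ and use the Jensen-type inequality $f(c)^2\le f(c^2)$ for positive $c$. This is Lemma~\ref{nw lm1} with $r=2$; it also follows directly from Lemma~\ref{cs inq} with the unit, since $f(c)^2=|f(\boldsymbol{1}^*c)|^2\le f(\boldsymbol{1})f(c^2)$. This gives $f(|a|^2)^2+f(|a|^4)^2\le f(|a|^4+|a|^8)$ in one line.

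Your route is shorter and dispenses with Lemmas~\ref{inequ2} and~\ref{inequ3}. It also proves more, namely the state-wise bound
$|f(a)|^2+f(a^*a)^2\le\bigl(f(|a|^4+|a|^8)+2|f(|a|^2a)|^2\bigr)^{1/2}$. Hence $dw_{\mathfrak{A}}^2(a)\le\sup_{f}\bigl(f(|a|^4+|a|^8)+2|f(|a|^2a)|^2\bigr)^{1/2}$, which is at least as strong as the stated inequality, because both terms are evaluated at a common state rather than maximized separately. The paper's route decouples the diagonal terms from $f$ at the first step, and so it only yields the norm bound.
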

\begin{proof} Let $a\in \mathfrak{A}$, $f\in \mathfrak{S}(\mathfrak{A})$. Then
	\begin{eqnarray*}
			 &&(|f(a)|^2+|f(a^*a)|^2)^2 \\
			& =& (|\langle \pi(a)\xi, \xi\rangle|^2+|\langle \pi(a^*a)\xi, \xi\rangle|^2)^2\quad\text{(by Theorem~\ref{gns})} \\
			& = &(|\langle \xi, \pi(a)\xi\rangle|^2+|\langle \xi, |\pi(a)|^2\xi\rangle|^2)^2 \quad\text{(as, $|\pi(a)|^2=\pi(|a|^2))$} \\
			& \leq &\|\xi\|^4(|\langle \pi(a)\xi, \pi(a)\xi\rangle|^2+2|\langle \pi(a)\xi, |\pi(a)|^2 \xi\rangle|^2+|\langle |\pi(a)|^2\xi, |\pi(a)|^2\xi\rangle|^2) \quad\text{(by Lemma~\ref{inequ1})} \\
			& =& |\langle |\pi(a)|^2\xi, \xi\rangle|^2+|\langle |\pi(a)|^4\xi, \xi\rangle|^2+2|\langle |\pi(a)|^2\pi(a)\xi, \xi\rangle|^2 \\
			& \leq& \sup\limits_{\|\xi\|=1}(|\langle |\pi(a)|^2\xi, \xi\rangle|^2+|\langle |\pi(a)|^4\xi, \xi\rangle|^2)+2|f(|a|^2a)|^2 \\
			&  \leq& \sup\limits_{\|\xi\|=1}(|\langle |\pi(a)|^2\xi, \xi\rangle|^2+|\langle |\pi(a)|^4\xi, \xi\rangle|^2)+2w_{\mathfrak{A}}^2(|a|^2a) \\
			& =&	\sup\limits_{|\gamma|^2+|\delta|^2\leq1}(\sup\limits_{\|\xi\|=1}|\langle(\gamma|\pi(a)|^2+\delta|\pi(a)|^4)\xi, \xi\rangle|^2)+2w_{\mathfrak{A}}^2(|a|^2a) \quad\text{(by Lemma~\ref{inequ2})} \\
			& \leq& \sup\limits_{|\gamma|^2+|\delta|^2\leq1}(\|\gamma|\pi(a)|^2+\delta|\pi(a)|^4\|^2)+2w_{\mathfrak{A}}^2(|a|^2a) \quad\text{(since $\gamma|\pi(a)|^2+\delta|\pi(a)|^4$ is normal}) \\
			& \leq& \sup\limits_{|\gamma|^2+|\delta|^2\leq1}(|\gamma|^2+|\delta|^2)\|(|\pi(a)|^2)^*|\pi(a)|^2+(|\pi(a)|^4)^*|\pi(a)|^4\| 
			 +2w_{\mathfrak{A}}^2(|a|^2a) \quad\text{(by Lemma~\ref{inequ3})} \\
			& \leq& \|\pi((|a|^2)^*(|a|^2))+
			\pi((|a|^4)^*(|a|^4))\|+2w_{\mathfrak{A}}^2(|a|^2a) \\
			&=& \|\pi((|a|^2)^*(|a|^2)+
			(|a|^4)^*(|a|^4))\|+2w_{\mathfrak{A}}^2(|a|^2a) \\
			& \leq& \||a|^4+|a|^8\|+2w_{\mathfrak{A}}^2(|a|^2a) \\
			& = &w_{\mathfrak{A}}(|a|^4+|a|^8)+2w_{\mathfrak{A}}^2(|a|^2a). 
	\end{eqnarray*}
    Thus, \[|f(a)|^2+|f(a^*a)|^2\leq (w_{\mathfrak{A}}(|a|^4+|a|^8)+2w_{\mathfrak{A}}^2(|a|^2a))^{\tfrac{1}{2}}.\]
		If we take the supremum over $f\in \mathfrak{S}(\mathfrak{A})$, we get
		\[
			dw_{\mathfrak{A}}^2(a)\leq (w_{\mathfrak{A}}(|a|^4+|a|^8)+2w_{\mathfrak{A}}^2(|a|^2a))^{\tfrac{1}{2}}.
		\]
\end{proof}
The Crawford number of an element $a$, denoted by $\mathfrak{C}(a)$, is defined as (see \cite{zamani2019characterization})
    \[\mathfrak{C}(a)=\inf\left\{|f(a)|:f\in \mathfrak{S}(\mathfrak{A})\right\}.\] 
The next theorem gives a lower bound for the algebraic Davis--Wielandt radius of $a\in\mathfrak{A}$.
\begin{theorem}\label{thm.3.12}
		Let $a\in \mathfrak{A}$, $f\in \mathfrak{S}(\mathfrak{A})$. Then
		\[dw_{\mathfrak{A}}^2(a)\geq
\dfrac{1}{2}\sup\limits_{\theta\in\mathbb{R}}(w_{\mathfrak{A}}^2(e^{i\theta}a+a^*a)+\mathfrak{C}^2(e^{i\theta}a-a^*a)),
		\] where $\mathfrak{C}(e^{i\theta}a-a^*a)$ is the Crawford number of $e^{i\theta}a-a^*a$.
\end{theorem}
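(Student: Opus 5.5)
Fix a state $f\in\mathfrak{S}(\mathfrak{A})$ and $\theta\in\mathbb{R}$, and set $x=e^{i\theta}f(a)$ and $y=f(a^*a)$. Note that $y\geq 0$ is real, since $a^*a$ is positive. The starting point is the parallelogram law in $\mathbb{C}$:
\[
|x+y|^2+|x-y|^2=2\left(|x|^2+|y|^2\right)=2\left(|f(a)|^2+f(a^*a)^2\right).
\]
By linearity of $f$ we have $x+y=f(e^{i\theta}a+a^*a)$ and $x-y=f(e^{i\theta}a-a^*a)$. This gives the pointwise identity
\[
|f(a)|^2+f(a^*a)^2=\tfrac12\left(|f(e^{i\theta}a+a^*a)|^2+|f(e^{i\theta}a-a^*a)|^2\right),
\]
valid for every state $f$ and every $\theta$.

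Next I bound the second term from below by the Crawford number. By definition, $|f(e^{i\theta}a-a^*a)|\geq\mathfrak{C}(e^{i\theta}a-a^*a)$ for every state $f$. Hence
\[
|f(a)|^2+f(a^*a)^2\geq \tfrac12\left(|f(e^{i\theta}a+a^*a)|^2+\mathfrak{C}^2(e^{i\theta}a-a^*a)\right).
\]

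Now I take the supremum over $f\in\mathfrak{S}(\mathfrak{A})$ on both sides. On the left this gives $dw_{\mathfrak{A}}^2(a)$, directly from the definition of $dw_{\mathfrak{A}}$. On the right, the Crawford term does not depend on $f$, and the supremum of $|f(e^{i\theta}a+a^*a)|^2$ is $w_{\mathfrak{A}}^2(e^{i\theta}a+a^*a)$. Therefore
\[
dw_{\mathfrak{A}}^2(a)\geq\tfrac12\left(w_{\mathfrak{A}}^2(e^{i\theta}a+a^*a)+\mathfrak{C}^2(e^{i\theta}a-a^*a)\right)
\]
for each fixed $\theta$.

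Finally, the left-hand side does not depend on $\theta$, so taking the supremum over $\theta\in\mathbb{R}$ yields the stated inequality.

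There is no real obstacle in this argument. The only point needing care is to take the supremum in $f$ before the supremum in $\theta$, and to keep the Crawford term as an $f$-independent lower bound. Positivity of $f(a^*a)$ is used only to identify $|y|^2$ with $f(a^*a)^2$.
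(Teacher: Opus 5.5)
Your proof is correct and has the same skeleton as the paper's. Both write $|f(a)|^2+f(a^*a)^2$ as half a sum of two squares, bound the ``minus'' term below by the $f$-independent Crawford number, and then take the supremum over $f$ followed by the supremum over $\theta$.

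The one real difference is how $\theta$ enters, and here your version is the more careful one. The paper chooses $\theta$ depending on $f$, so that $e^{i\theta}f(a)=|f(a)|$. This makes $f(e^{i\theta}a\pm a^*a)$ real and lets it write the identity with ordinary squares. It then takes the supremum over $f$ as though $\theta$ were fixed, and that step is not justified as written: for a given $\theta$, the inequality has only been established for states with $e^{i\theta}f(a)\geq 0$.

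Your use of the complex parallelogram law, $|x+y|^2+|x-y|^2=2(|x|^2+|y|^2)$, makes the identity hold for every $f$ and every $\theta$ at once. So the order of suprema you emphasize is legitimate, and your argument is complete.

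A minor point: you only need $f(a^*a)$ to be real, not positive, to identify $|y|^2$ with $f(a^*a)^2$.
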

\begin{proof}
	Let $a\in\mathfrak{A}$ and $f\in\mathfrak{S}(\mathfrak{A})$. Then there exists $\theta\in\mathbb{R}$ such that $|f(a)|=e^{i\theta}f(a)$. Hence, we have 
	\begin{eqnarray*}
			 |f(a)|^2+|f(a^*a)|^2 
			& =& \dfrac{1}{2}(|f(a)|+f(a^*a))^2+\dfrac{1}{2}(|f(a)|-f(a^*a))^2 \\
			& = &\dfrac{1}{2}(e^{i\theta}f(a)+f(a^*a))^2+\dfrac{1}{2}(e^{i\theta}f(a)-f(a^*a))^2 \\
			& \geq& \dfrac{1}{2}(f(e^{i\theta}a+a^*a))^2+\dfrac{1}{2}\mathfrak{C}^2(e^{i\theta}a-a^*a) \quad\text{(since, $f(e^{i\theta}a\pm a^*a)\in\mathbb{R})$}.
	\end{eqnarray*}
	Taking the supremum over $f\in \mathfrak{S}(\mathfrak{A})$, we get
	\[
		dw_{\mathfrak{A}}^2(a)\geq \dfrac{1}{2}w_{\mathfrak{A}}^2(e^{i\theta}a+a^*a)+\dfrac{1}{2}\mathfrak{C}^2(e^{i\theta}a-a^*a).
	\]
	This holds for all $\theta\in\mathbb{R}$. Hence,
		\[
		dw_{\mathfrak{A}}^2(a)\geq \dfrac{1}{2}\sup\limits_{\theta\in\mathbb{R}}(w_{\mathfrak{A}}^2(e^{i\theta}a+a^*a)+\mathfrak{C}^2(e^{i\theta}a-a^*a)).
	\]
\end{proof}
Next, we derive an upper bound for the Davis--Wielandt radius of an element in $C^*$-algebra by means of the following lemma.
\begin{lemma}\label{inequ4}\cite[Lemma~2.1]{dragomir2015reverses}
	For any $x,y\in\mathscr{H}$ and any $\lambda\in\mathbb{C}$,
	\[
		\|x\|^2\|y\|^2-|\langle x, y\rangle|^2=	\|x-\lambda y\|^2\|y\|^2-|\langle x-\lambda y, y\rangle|^2.
	\]
\end{lemma}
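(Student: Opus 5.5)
The plan is to verify the identity by a direct expansion of both sides, using only sesquilinearity of the inner product (linear in the first slot, conjugate linear in the second, as in the paper's conventions). A more conceptual reading is also available: both sides are the Gram determinant $\det\begin{pmatrix}\langle x,x\rangle & \langle x,y\rangle\\ \langle y,x\rangle & \langle y,y\rangle\end{pmatrix}$, computed for the pair $(x,y)$ and for the pair $(x-\lambda y,y)$. A determinant is unchanged when a multiple of one row and the corresponding column is subtracted from another. I will nevertheless carry out the explicit computation, since it is short and self-contained.

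First I will expand the squared norm
\[
\|x-\lambda y\|^2=\|x\|^2-\overline{\lambda}\langle x,y\rangle-\lambda\overline{\langle x,y\rangle}+|\lambda|^2\|y\|^2
=\|x\|^2-2\Re\bigl(\overline{\lambda}\langle x,y\rangle\bigr)+|\lambda|^2\|y\|^2 .
\]
Multiplying by $\|y\|^2$ gives
\[
\|x\|^2\|y\|^2-2\Re\bigl(\overline{\lambda}\langle x,y\rangle\bigr)\|y\|^2+|\lambda|^2\|y\|^4 .
\]
Next I will compute $\langle x-\lambda y,y\rangle=\langle x,y\rangle-\lambda\|y\|^2$. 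Its squared modulus is
\[
|\langle x,y\rangle|^2-2\Re\bigl(\overline{\lambda}\langle x,y\rangle\bigr)\|y\|^2+|\lambda|^2\|y\|^4 ,
\]
where I use that $\|y\|^2$ is real.

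Subtracting the second expression from the first, the mixed terms $-2\Re(\overline{\lambda}\langle x,y\rangle)\|y\|^2$ cancel, and so do the terms $|\lambda|^2\|y\|^4$. What remains is exactly $\|x\|^2\|y\|^2-|\langle x,y\rangle|^2$, which proves the claim for every $\lambda\in\mathbb{C}$.

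There is no real obstacle here. The only point needing care is the bookkeeping of conjugates in the cross terms. Both cross terms must reduce to the same real quantity $2\Re(\overline{\lambda}\langle x,y\rangle)\|y\|^2$, and this holds precisely because $\|y\|^2\in\mathbb{R}$.
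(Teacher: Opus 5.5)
Your expansion is correct: the cross terms $-2\Re(\overline{\lambda}\langle x,y\rangle)\|y\|^2$ and the terms $|\lambda|^2\|y\|^4$ cancel exactly as you say, leaving $\|x\|^2\|y\|^2-|\langle x,y\rangle|^2$. The paper gives no proof of its own; it simply cites Dragomir for this identity, and your direct sesquilinear expansion (equivalently, your Gram-determinant invariance remark) is the standard verification.
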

\begin{theorem}\label{thm.3.14}
	Let $a\in \mathfrak{A}$, $f\in \mathfrak{S}(\mathfrak{A})$ and $\lambda\in\mathbb{C}$. Then
	\[
		dw_{\mathfrak{A}}^2(a)\leq \inf\limits_{\lambda\in\mathbb{C}}\left(2\|\Re(\lambda)\Re(a)+\Im(\lambda)\Im(a)\|+\|a^*a-2\Re(\bar{\lambda}a)\|^2+2\|\Re(\overline{\lambda}a)\|-|\lambda|^2+w_{\mathfrak{A}}^2(a-\lambda\boldsymbol{1})\right).
	\] In particular, $dw_{\mathfrak{A}}(a)\leq \sqrt{w_{\mathfrak{A}}^2(a)+\|a\|^4}$.
    \end{theorem}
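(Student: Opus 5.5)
The plan is to bound the two summands of $|f(a)|^2+f(a^*a)^2$ separately for a fixed state $f\in\mathfrak{S}(\mathfrak{A})$ and a fixed $\lambda\in\mathbb{C}$, then take the supremum over $f$ and the infimum over $\lambda$. However, a test case in the third paragraph suggests the displayed inequality is false as stated, so the general bound cannot be obtained this way. Only the ``in particular'' statement survives, and it follows from \eqref{dwinq}.

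\emph{First summand.} Expand $f(a)=f(a-\lambda\boldsymbol{1})+\lambda$. This gives
\[
|f(a)|^2=|f(a-\lambda\boldsymbol 1)|^2+2\Re\big(\overline{\lambda}f(a)\big)-|\lambda|^2 .
\]
Since $\Re(\overline{\lambda}f(a))=f(\Re(\overline{\lambda}a))$ and $\Re(\overline{\lambda}a)=\Re(\lambda)\Re(a)+\Im(\lambda)\Im(a)$, we obtain
\[
|f(a)|^2\le w_{\mathfrak A}^2(a-\lambda\boldsymbol 1)+2\|\Re(\lambda)\Re(a)+\Im(\lambda)\Im(a)\|-|\lambda|^2 .
\]
Alternatively, one can pass through Theorem~\ref{gns} and Lemma~\ref{inequ4} with $x=\pi(a)\xi$ and $y=\xi$, which gives the same identity.

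\emph{Second summand.} Write $f(a^*a)=f\big(a^*a-2\Re(\overline{\lambda}a)\big)+2f(\Re(\overline{\lambda}a))$. The intended target is
\[
f(a^*a)^2\le \|a^*a-2\Re(\overline{\lambda}a)\|^2+2\|\Re(\overline{\lambda}a)\|,
\]
which, added to the first bound, would give exactly the displayed right-hand side.

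\emph{Main obstacle.} This second estimate does not hold: squaring produces a cross term and a term $4f(\Re(\overline\lambda a))^2$, and these are quadratic, not linear, in $\|\Re(\overline\lambda a)\|$. A concrete check shows the whole inequality fails. Take $a=2\boldsymbol 1$ and $\lambda=1$. Then:
\begin{itemize}
\item $dw_{\mathfrak A}^2(a)=4+16=20$;
\item the right-hand side at $\lambda=1$ is $4+0+4-1+1=8$.
\end{itemize}
So the infimum is at most $8<20$. I would therefore expect the general statement to need correction, for instance by replacing the term $2\|\Re(\overline\lambda a)\|$ in the second estimate with a quadratic expression in $\|\Re(\overline\lambda a)\|$ plus a corresponding cross term.

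\emph{The ``in particular'' case.} This is the case $\lambda=0$, which I would prove directly. For every state $f$ we have $|f(a)|\le w_{\mathfrak A}(a)$, and $0\le f(a^*a)\le\|a^*a\|=\|a\|^2$. Hence
\[
|f(a)|^2+f(a^*a)^2\le w_{\mathfrak A}^2(a)+\|a\|^4 ,
\]
and taking the supremum over $f$ gives $dw_{\mathfrak A}(a)\le\sqrt{w_{\mathfrak A}^2(a)+\|a\|^4}$, in agreement with \eqref{dwinq}.
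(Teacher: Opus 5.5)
You are right: the displayed inequality is false, and your counterexample is valid. For $a=2\boldsymbol{1}$ and $\lambda=1$ one has $\Re(\overline{\lambda}a)=\Re(\lambda)\Re(a)+\Im(\lambda)\Im(a)=2\boldsymbol{1}$, $a^*a-2\Re(\overline{\lambda}a)=0$ and $w_{\mathfrak{A}}(a-\boldsymbol{1})=1$. So the bracket is $4+0+4-1+1=8<20=dw_{\mathfrak{A}}^2(a)$. The paper's own sharpness example also fails: for $a=\boldsymbol{1}$ and $\lambda=\tfrac{1}{4}$ the bracket is $\tfrac{1}{2}+\tfrac{1}{4}+\tfrac{1}{2}-\tfrac{1}{16}+\tfrac{9}{16}=\tfrac{7}{4}<2=dw_{\mathfrak{A}}^2(\boldsymbol{1})$.

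Your diagnosis also locates the actual error in the paper's proof. The paper derives, as you do, the two valid estimates
\begin{gather*}
f(a^*a)\le 2\|\Re(\lambda)\Re(a)+\Im(\lambda)\Im(a)\|+\|a^*a-2\Re(\overline{\lambda}a)\|,\\
|f(a)|^2\le 2\|\Re(\overline{\lambda}a)\|-|\lambda|^2+w_{\mathfrak{A}}^2(a-\lambda\boldsymbol{1}).
\end{gather*}
In the step beginning ``Thus,'' it then adds the first one, which bounds $f(a^*a)$, as if it bounded $f(a^*a)^2$. That line squares neither norm, so at $\lambda=0$ it would give $dw_{\mathfrak{A}}^2(a)\le w_{\mathfrak{A}}^2(a)+\|a\|^2$, which is false for $a=2\boldsymbol{1}$. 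The theorem statement squares only the term $\|a^*a-2\Re(\overline{\lambda}a)\|$. This matches neither the proof nor a correct bound, though it happens to give the right expression at $\lambda=0$.

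Your suggested correction, squaring the whole bound for $f(a^*a)$, is valid:
\[
dw_{\mathfrak{A}}^2(a)\le\inf_{\lambda\in\mathbb{C}}\Bigl(w_{\mathfrak{A}}^2(a-\lambda\boldsymbol{1})+2\|\Re(\overline{\lambda}a)\|-|\lambda|^2+\bigl(2\|\Re(\overline{\lambda}a)\|+\|a^*a-2\Re(\overline{\lambda}a)\|\bigr)^2\Bigr).
\]
However, it gains nothing over $\lambda=0$. For every $\lambda$, the first three terms dominate $\sup_f|f(a)|^2=w_{\mathfrak{A}}^2(a)$. By the triangle inequality, the last term is at least $\|a^*a\|^2=\|a\|^4$. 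So the infimum equals $w_{\mathfrak{A}}^2(a)+\|a\|^4$.

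The only correct content of the theorem is therefore the ``in particular'' clause. Your direct proof of it, from $|f(a)|\le w_{\mathfrak{A}}(a)$ and $0\le f(a^*a)\le\|a\|^2$, is complete. It is the upper bound already recorded in \eqref{dwinq}.
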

	\begin{proof}
		Let $a\in\mathfrak{A}$ and $f\in\mathfrak{S}(\mathfrak{A})$. Then from Theorem~\ref{gns} there exists a representation $\pi$ of $\mathfrak{A}$ and a unit vector $\xi\in\mathscr{H}$ such that $f(a)=\langle\pi(a)\xi,\xi\rangle$. We have $a=\Re(a)+i\Im(a)$
		$\implies \pi(a)=\Re(\pi(a))+i\Im(\pi(a))$.
		Also, let $\lambda\in\mathbb{C}$. Then
		from Lemma~\ref{inequ4}, we have
		\begin{eqnarray}\label{new eqn1}
			\|\pi(a)\xi\|^2\|\xi\|^2-|\langle\pi(a)\xi, \xi\rangle|^2=	\|\pi(a)\xi-\lambda\xi\|^2\|\xi\|^2-|\langle\pi(a)\xi-\lambda\xi, \xi\rangle|^2.
		\end{eqnarray}
		Hence,
		\begin{eqnarray*}
				& &\|\pi(a)\xi\|^2 \\
				& =& (\langle\Re(\pi(a))\xi, \xi\rangle)^2-(\langle\Re(\pi(a)-\lambda I)\xi, \xi\rangle)^2+(\langle\Im(\pi(a))\xi, \xi\rangle)^2-(\langle\Im(\pi(a)-\lambda I)\xi, \xi\rangle)^2+\|\pi(a)\xi-\lambda\xi\|^2 \\
				& = &\langle(2\Re(\pi(a))-\Re(\lambda)I)\xi, \xi\rangle\langle\Re(\lambda)\xi, \xi\rangle+\langle(2\Im(\pi(a))-\Im(\lambda)I)\xi, \xi\rangle\langle\Im(\lambda)\xi, \xi\rangle+\|\pi(a)\xi-\lambda\xi\|^2 \\
				& =& 2\Re(\lambda)\langle\Re(\pi(a))\xi, \xi\rangle+2\Im(\lambda)\langle\Im(\pi(a))\xi, \xi\rangle-(\Re(\lambda))^2-(\Im(\lambda))^2+\|\pi(a)\xi-\lambda\xi\|^2 \\
				& = & 2\Re(\lambda)\langle\Re(\pi(a))\xi, \xi\rangle+2\Im(\lambda)\langle\Im(\pi(a))\xi, \xi\rangle-|\lambda|^2+\|\pi(a)\xi-\lambda\xi\|^2 \\ 
				& =&  2(\Re(\lambda)\langle\Re(\pi(a))\xi, \xi\rangle+\Im(\lambda)\langle\Im(\pi(a))\xi, \xi\rangle)+\langle(\pi(a^*a)-2\Re(\overline{\lambda}\pi(a)))\xi, \xi\rangle \\
				& \leq& 2\|\Re(\lambda)\Re(a)+\Im(\lambda)\Im(a)\|+\|a^*a-2\Re(\overline{\lambda}a)\|. 
		\end{eqnarray*}
		Again, from equation~\eqref{new eqn1}, we get
		\begin{eqnarray*}
				|\langle\pi(a)\xi, \xi\rangle|^2 
				& = &	\|\pi(a)\xi\|^2\|\xi\|^2-\|\pi(a)\xi-\lambda\xi\|^2\|\xi\|^2+|\langle\pi(a)\xi-\lambda\xi, \xi\rangle|^2 \\
				& = &2\langle\Re(\overline{\lambda}\pi(a))\xi, \xi\rangle-|\lambda|^2+|\langle\pi(a)\xi-\lambda\xi, \xi\rangle|^2 \\
				& = &2\langle\pi(\Re(\overline{\lambda}a))\xi, \xi\rangle-|\lambda|^2+|\langle\pi(a-\lambda\boldsymbol{1})\xi, \xi\rangle|^2 \\
				& \leq &2\|\pi\|\|\Re(\overline{\lambda}a)\|-|\lambda|^2+|f(a-\lambda\boldsymbol{1})|^2 \\
				& \leq& 2\|\Re(\overline{\lambda}a)\|-|\lambda|^2+w_{\mathfrak{A}}^2(a-\lambda\boldsymbol{1}) \quad\text{(since $\|\pi\|\leq 1$)}.
		\end{eqnarray*}
		Thus, 
		\[
			|\langle\pi(a)\xi, \xi\rangle|^2+|\langle\pi(a^*a)\xi, \xi\rangle|^2\leq 2\|\Re(\overline{\lambda}a)\|-|\lambda|^2+w_{\mathfrak{A}}^2(a-\lambda\boldsymbol{1})+2\|\Re(\lambda)\Re(a)+\Im(\lambda)\Im(a)\|+\|a^*a-2\Re(\overline{\lambda}a)\|\]
            Hence, we have \[
            |f(a)|^2+f(a^*a)^2\leq 2\|\Re(\overline{\lambda}a)\|-|\lambda|^2+w_{\mathfrak{A}}^2(a-\lambda\boldsymbol{1})+2\|\Re(\lambda)\Re(a)+\Im(\lambda)\Im(a)\|+\|a^*a-2\Re(\overline{\lambda}a)\|.
		\]
			Taking the supremum over $f\in \mathfrak{S}(\mathfrak{A})$ and the infimum over all $\lambda\in\mathbb{C}$, the required inequality holds.

            \noindent If we take $\lambda=0$, then we get $dw_{\mathfrak{A}}(a)\leq \sqrt{w_{\mathfrak{A}}^2(a)+\|a\|^4}$.
	\end{proof}
In the next result, we deduce an upper bound for the algebraic Davis--Wielandt radius of the sum of two elements.
\begin{theorem}\label{thm.3.16}
Let $a,b\in \mathfrak{A}$, $f\in \mathfrak{S}(\mathfrak{A})$. Then
\[
	dw_{\mathfrak{A}}(a+b)\leq dw_{\mathfrak{A}}(a)+dw_{\mathfrak{A}}(b)+\|a^*b+b^*a\|.
\] 
\end{theorem}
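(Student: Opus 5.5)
Fix a state $f\in\mathfrak{S}(\mathfrak{A})$ and work with the vector $\bigl(f(a+b),\,f((a+b)^*(a+b))\bigr)\in\mathbb{C}\times\mathbb{R}$. Expanding the product and using linearity of $f$ gives the decomposition
\[
\bigl(f(a+b),\,f((a+b)^*(a+b))\bigr)=\bigl(f(a),f(a^*a)\bigr)+\bigl(f(b),f(b^*b)\bigr)+\bigl(0,\,f(a^*b+b^*a)\bigr).
\]
The plan is to take the Euclidean norm of both sides in $\mathbb{C}\times\mathbb{R}\simeq\mathbb{R}^3$ and apply the triangle inequality. This yields
\[
\sqrt{|f(a+b)|^2+f((a+b)^*(a+b))^2}\le \sqrt{|f(a)|^2+f(a^*a)^2}+\sqrt{|f(b)|^2+f(b^*b)^2}+|f(a^*b+b^*a)|.
\]

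Next, bound each of the three terms uniformly in $f$. The first two terms are at most $dw_{\mathfrak{A}}(a)$ and $dw_{\mathfrak{A}}(b)$ by the definition of the algebraic Davis--Wielandt radius. For the third term, note that $a^*b+b^*a$ is self-adjoint, so $f(a^*b+b^*a)$ is real. Since states have norm one, $|f(a^*b+b^*a)|\le\|a^*b+b^*a\|$; this is also the upper estimate in \eqref{*norm}, as $w_{\mathfrak A}\le\|\cdot\|$. Taking the supremum over $f\in\mathfrak{S}(\mathfrak{A})$ on the left-hand side then gives the stated inequality.

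None of these steps is a genuine obstacle. The only point to check carefully is that the triangle inequality is applied in $\mathbb{R}^3$ to genuine vectors: the second coordinates $f(a^*a)$, $f(b^*b)$ and $f(a^*b+b^*a)$ are all real, so the decomposition above is legitimate. No Cauchy--Schwarz estimate is needed. Optionally, one could sharpen the last term using Lemma~\ref{cs inq}, via $|f(a^*b+b^*a)|\le 2\sqrt{f(a^*a)f(b^*b)}$, but the statement as given only requires the crude norm bound.
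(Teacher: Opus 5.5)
Your proof is correct and follows the paper's route. The paper records the same decomposition only as the set inclusion $DW_{\mathfrak{A}}(a+b)\subseteq DW_{\mathfrak{A}}(a)+DW_{\mathfrak{A}}(b)+\mathfrak{N}$ and calls the rest easy; you supply that omitted step, namely the triangle inequality in $\mathbb{R}^3$ for each fixed state, the bound $|f(a^*b+b^*a)|\le\|a^*b+b^*a\|$, and then the supremum over $f$.
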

\begin{proof} Let $a, b\in \mathfrak{A}$ and $f\in \mathfrak{S}(\mathfrak{A})$. Then
	\begin{eqnarray*}
			DW_{\mathfrak{A}}(a+b)
			& = &\{(f(a+b), f((a+b)^*(a+b))):f\in\mathfrak{S}(\mathfrak{A})\}\\
			& = &\{(f(a), f(a^*a))+(f(b), f(b^*b))+(0, f(a^*b+b^*a)):f\in\mathfrak{S}(\mathfrak{A})\}.
	\end{eqnarray*}
	Hence, $DW_{\mathfrak{A}}(a+b)\subseteq DW_{\mathfrak{A}}(a)+DW_{\mathfrak{A}}(b)+\mathfrak{N}$, where $\mathfrak{N}=\{(0, f(a^*b+b^*a)):f\in\mathfrak{S}(\mathfrak{A})\}$. Then one can easily get the required inequality.
\end{proof}
\begin{corollary}
    Let $a,b\in\mathfrak{A}$ be such that $a^*b+b^*a=0$. Then
    \[dw_{\mathfrak{A}}(a+b)\leq dw_{\mathfrak{A}}(a)+dw_{\mathfrak{A}}(b).\]
\end{corollary}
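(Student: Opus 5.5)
This corollary should fall straight out of Theorem~\ref{thm.3.16}. The plan is to substitute the hypothesis $a^*b+b^*a=0$ into the bound proved there and observe that the extra term disappears, since $\|a^*b+b^*a\|=\|0\|=0$. That leaves $dw_{\mathfrak{A}}(a+b)\leq dw_{\mathfrak{A}}(a)+dw_{\mathfrak{A}}(b)$.

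Because the proof of Theorem~\ref{thm.3.16} was only sketched, I would also give a direct argument at the level of states, so that the corollary does not rely on that sketch. Fix $f\in\mathfrak{S}(\mathfrak{A})$. Expanding $(a+b)^*(a+b)=a^*a+b^*b+(a^*b+b^*a)$ and using the hypothesis gives $f((a+b)^*(a+b))=f(a^*a)+f(b^*b)$. Hence the vector
\[
\bigl(f(a+b),\,f((a+b)^*(a+b))\bigr)\in\mathbb{C}\times\mathbb{R}\simeq\mathbb{R}^3
\]
equals $\bigl(f(a),f(a^*a)\bigr)+\bigl(f(b),f(b^*b)\bigr)$.

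Next, I would apply the triangle inequality for the Euclidean norm on $\mathbb{C}\times\mathbb{R}$:
\[
\sqrt{|f(a+b)|^2+f((a+b)^*(a+b))^2}\leq\sqrt{|f(a)|^2+f(a^*a)^2}+\sqrt{|f(b)|^2+f(b^*b)^2}.
\]
Each term on the right is at most $dw_{\mathfrak{A}}(a)$ and $dw_{\mathfrak{A}}(b)$ respectively. Taking the supremum over $f$ on the left then finishes the proof.

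The only point that needs care is that the cross term $f(a^*b+b^*a)$ must be exactly zero, not merely small. This holds because $f$ is linear and $a^*b+b^*a=0$ in $\mathfrak{A}$, so I do not expect any real obstacle.
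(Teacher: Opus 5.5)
Your proposal is correct and matches the paper, which obtains the corollary immediately from Theorem~\ref{thm.3.16} by noting that $\|a^*b+b^*a\|=0$. Your supplementary state-level argument is also valid: the cross term $f(a^*b+b^*a)$ vanishes, and the Euclidean triangle inequality in $\mathbb{C}\times\mathbb{R}$ then gives the bound. It is just the paper's sketched proof of Theorem~\ref{thm.3.16}, specialized to this case and written out in full.
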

The next theorem provides new lower and upper estimates for the Davis--Wielandt radius of an element in a $C^*$-algebra.
\begin{theorem}\label{thm.3.17}
	Let $a\in \mathfrak{A}$, $f\in \mathfrak{S}(\mathfrak{A})$. Then
	\begin{eqnarray*}
				&& \max\{w_{\mathfrak{A}}(\Re(a)+i|a|^2), w_{\mathfrak{A}}(\Im(a)+i|a|^2)\}\leq dw_{\mathfrak{A}}(a) \\
				& \leq&\min\left\{\sqrt{w_{\mathfrak{A}}^2(\Re(a)+i|a|^2)+\|\Im(a)\|^2}, \sqrt{w_{\mathfrak{A}}^2(\Im(a)+i|a|^2)+\|\Re(a)\|^2}\right\}.
	\end{eqnarray*}
\end{theorem}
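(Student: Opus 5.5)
Everything reduces to one pointwise identity for a fixed state. Fix $f\in\mathfrak{S}(\mathfrak{A})$ and write $f(a)=x+iy$, where $x=f(\Re(a))$ and $y=f(\Im(a))$ are real because $\Re(a),\Im(a)$ are self-adjoint. Since $|a|^2=a^*a$ is positive, $t:=f(|a|^2)\geq 0$ is real. Then $f(\Re(a)+i|a|^2)=x+it$ and $f(\Im(a)+i|a|^2)=y+it$, so
\[
|f(a)|^2+f(a^*a)^2=x^2+y^2+t^2=|f(\Re(a)+i|a|^2)|^2+f(\Im(a))^2=|f(\Im(a)+i|a|^2)|^2+f(\Re(a))^2 .
\]
Both bounds follow by taking suprema over $f$ in these two splittings.

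For the lower bound, drop the nonnegative term $y^2$ to get $|f(\Re(a)+i|a|^2)|^2\leq |f(a)|^2+f(a^*a)^2\leq dw_{\mathfrak{A}}^2(a)$. Taking the supremum over $f\in\mathfrak{S}(\mathfrak{A})$ gives $w_{\mathfrak{A}}(\Re(a)+i|a|^2)\leq dw_{\mathfrak{A}}(a)$. Dropping $x^2$ instead gives the same inequality with $\Im(a)$ in place of $\Re(a)$, and together these yield the maximum.

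For the upper bound, use the second splitting. The first term satisfies $|f(\Re(a)+i|a|^2)|^2\leq w_{\mathfrak{A}}^2(\Re(a)+i|a|^2)$. For the second term, $|f(\Im(a))|\leq\|f\|\,\|\Im(a)\|=\|\Im(a)\|$; alternatively, since $\Im(a)$ is self-adjoint, $w_{\mathfrak{A}}(\Im(a))=\|\Im(a)\|$. Hence $|f(a)|^2+f(a^*a)^2\leq w_{\mathfrak{A}}^2(\Re(a)+i|a|^2)+\|\Im(a)\|^2$. Taking the supremum over $f$ and then square roots gives the first expression inside the minimum. Applying the third splitting in the same way gives the bound with the roles of $\Re(a)$ and $\Im(a)$ exchanged, and combining the two yields the minimum.

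There is no real obstacle. The only point needing care is the identity itself, specifically that $f(|a|^2)$ and $f(\Re(a)),f(\Im(a))$ are real so that the moduli split exactly; this uses positivity of states and the self-adjointness of $\Re(a)$ and $\Im(a)$.
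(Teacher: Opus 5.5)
Your proposal is correct and takes essentially the same approach as the paper. The paper also uses the pointwise identity $|f(a)|^2+f(a^*a)^2=|f(\Re(a)+i|a|^2)|^2+|f(\Im(a))|^2$ and its $\Re\leftrightarrow\Im$ analogue, drops the nonnegative term for the lower bound, and bounds $|f(\Im(a))|\le\|\Im(a)\|$ before taking suprema for the upper bound.
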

\begin{proof}
	We have $a=\Re(a)+i\Im(a)$.
	Now,
	\begin{eqnarray*}
				|f(a)|^2+f(a^*a)^2 
				& = & |f(\Re(a))|^2+ |f(\Im(a))|^2+f(|a|^2)^2\\
				& =& |f(\Re(a)+i|a|^2)|^2 +|f(\Im(a))|^2 \\
				& \leq &w_{\mathfrak{A}}^2 (\Re(a)+i|a|^2) +\|\Im(a)\|^2.
	\end{eqnarray*}
	Taking the supremum over $f\in \mathfrak{S}(\mathfrak{A})$, we obtain
	\begin{eqnarray}\label{min1}
		dw_{\mathfrak{A}}^2(a)\leq w_{\mathfrak{A}}^2 (\Re(a)+i|a|^2) +\|\Im(a)\|^2.
	\end{eqnarray} 
	Similarly, proceeding as above, we get \begin{eqnarray}\label{min2}
		dw_{\mathfrak{A}}^2(a)\leq w_{\mathfrak{A}}^2 (\Im(a)+i|a|^2) +\|\Re(a)\|^2.
	\end{eqnarray} Combining inequalities~\eqref{min1} and \eqref{min2} yields
    \[dw_{\mathfrak{A}}(a)\leq\min\left\{\sqrt{w_{\mathfrak{A}}^2(\Re(a)+i|a|^2)+\|\Im(a)\|^2}, \sqrt{w_{\mathfrak{A}}^2(\Im(a)+i|a|^2)+\|\Re(a)\|^2}\right\}.\]
	Now,\begin{eqnarray*}
		|f(a)|^2+|f(a^*a)|^2&=& |f(\Re(a)+i|a|^2)|^2 +|f(\Im(a))|^2\\
		&\geq& |f(\Re(a)+i|a|^2)|^2.
        \end{eqnarray*}
		Taking the supremum over $f\in \mathfrak{S}(\mathfrak{A})$, we get
			\begin{eqnarray}\label{max5}
			dw_{\mathfrak{A}}(a)\geq w_{\mathfrak{A}}(\Re(a)+i|a|^2).
		\end{eqnarray}
			In a similar way, we can get
			\begin{eqnarray}\label{max6}
            dw_{\mathfrak{A}}(a)\geq w_{\mathfrak{A}}(\Im(a)+i|a|^2).
		\end{eqnarray} Inequalities~\eqref{max5} and \eqref{max6} together imply \[\max\left\{w_{\mathfrak{A}}(\Re(a)+i|a|^2), w_{\mathfrak{A}}(\Im(a)+i|a|^2)\right\}\leq dw_{\mathfrak{A}}(a).\]
\end{proof}
\begin{corollary}
    Let $a\in\mathfrak{A}$ be such that $\Im(a)=(\Re(a))^2$. Then \[w_{\mathfrak{A}}(a)=\|\Re(a)\|\sqrt{1+\|\Re(a)\|^2}.\]
\end{corollary}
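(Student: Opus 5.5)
The plan is to argue directly from the definition of $w_{\mathfrak A}$ rather than through Theorem~\ref{thm.3.17}, since the hypothesis makes $a$ a function of a single self-adjoint element. Write $h=\Re(a)$, so that $h=h^*$, $\Im(a)=h^2$, and hence $a=h+ih^2$. For any $f\in\mathfrak S(\mathfrak A)$, both $f(h)$ and $f(h^2)$ are real, because states are hermitian. Therefore
\[
|f(a)|^2=f(h)^2+f(h^2)^2 .
\]
The whole problem reduces to maximizing $f(h)^2+f(h^2)^2$ over states.

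For the upper bound, I use that for self-adjoint $h$ we have $|f(h)|\le\|h\|$, and that $0\le f(h^2)\le\|h^2\|=\|h\|^2$ (by positivity and $\|h^2\|=\|h\|^2$). This gives
\[
|f(a)|^2\le \|h\|^2+\|h\|^4=\|h\|^2(1+\|h\|^2),
\]
so $w_{\mathfrak A}(a)\le\|\Re(a)\|\sqrt{1+\|\Re(a)\|^2}$.

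For the reverse inequality, I need a single state at which both bounds are attained simultaneously. This is the only step requiring care. I first recall that for self-adjoint $h$ one has $w_{\mathfrak A}(h)=\|h\|$. Since $W_{\mathfrak A}(h)$ is compact (equivalently, $\mathfrak S(\mathfrak A)$ is weak$^*$-compact and $f\mapsto|f(h)|$ is weak$^*$-continuous), the supremum is attained: there is $f_0\in\mathfrak S(\mathfrak A)$ with $|f_0(h)|=\|h\|$. Then Lemma~\ref{cs inq}, applied with the pair $(\boldsymbol 1,h)$, gives
\[
f_0(h^2)\ge |f_0(h)|^2=\|h\|^2 .
\]
Combined with $f_0(h^2)\le\|h\|^2$, this yields $f_0(h^2)=\|h\|^2$. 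Consequently $|f_0(a)|^2=\|h\|^2+\|h\|^4$, which gives the matching lower bound and completes the proof.

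I expect the main (mild) obstacle to be exactly this simultaneous attainment. Maximizing $|f(h)|$ and $f(h^2)$ separately would not suffice, and the Cauchy--Schwarz trick above is what forces them to be maximized by the same state. If one prefers to stay closer to the preceding section, one can note as a remark that $f(a^*a)$ plays no role here, so Theorem~\ref{thm.3.17} is not needed.
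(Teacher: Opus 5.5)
Your proof is correct, and it takes a more direct route than the paper's. The paper applies Theorem~\ref{thm.3.17} to the self-adjoint element $b=\Re(a)$. Since $\Im(b)=0$ and $|b|^2=b^2$, the lower and upper bounds there both reduce to $w_{\mathfrak{A}}(b+ib^2)$, which gives $dw_{\mathfrak{A}}(b)=w_{\mathfrak{A}}(b+ib^2)$. It then uses $dw_{\mathfrak{A}}(b)=\sqrt{\|b\|^2+\|b\|^4}$. That value is stated without argument at that point. It really comes from $b$ being normaloid together with Corollary~\ref{norm corol}, a result proved only in Section~\ref{sec4} and resting on Theorem~\ref{norm thm} and the operator-theoretic Theorem~\ref{op thm}.

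Your identity $|f(h+ih^2)|^2=f(h)^2+f(h^2)^2$ is exactly the state-by-state form of $w_{\mathfrak{A}}(b+ib^2)=dw_{\mathfrak{A}}(b)$, so you never need Theorem~\ref{thm.3.17}. Your Cauchy--Schwarz step with the pair $(\boldsymbol{1},h)$ replaces the appeal to the normaloid characterization: it shows that a state maximizing $|f(h)|$ automatically maximizes $f(h^2)$.

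What your route buys is that it is self-contained and avoids the forward reference to Section~\ref{sec4}. The same trick also gives more: $|f_0(x)|=\|x\|$ forces $f_0(x^*x)\ge\|x\|^2$ by Lemma~\ref{cs inq}. This proves $dw_{\mathfrak{A}}(x)=\|x\|\sqrt{1+\|x\|^2}$ for every normaloid $x$, without the GNS construction or Theorem~\ref{op thm}. What the paper's route buys is mainly organizational: it presents the corollary as a by-product of its bounds for $dw_{\mathfrak{A}}$, not as a more economical argument.
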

\begin{proof}
   Let $b\in\mathfrak{A}$ be self-adjoint. Then from Theorem~\ref{thm.3.17} it follows that 
   \[\max\left\{w_{\mathfrak{A}}(b+ib^2),w_{\mathfrak{A}}(ib^2)\right\}\leq dw_{\mathfrak{A}}(b)\leq\min\left\{w_{\mathfrak{A}}(b+ib^2),\sqrt{w_{\mathfrak{A}}^2(ib^2),\|b\|^2}\right\}.\] From this, we can say $dw_{\mathfrak{A}}(b)=w_{\mathfrak{A}}(b+ib^2)$. Also, we have $dw_{\mathfrak{A}}(b)=\sqrt{\|b\|^2+\|b\|^4}$. Thus, $w_{\mathfrak{A}}(b+ib^2)=\sqrt{\|b\|^2+\|b\|^4}$. Now, taking $b=\Re(a)$ completes the proof.
\end{proof}

\begin{remark}
    Theorem~\ref{thm.3.10}, Theorem~\ref{thm.3.12}, Theorem~\ref{thm.3.14}, Theorem~\ref{thm.3.16}, and Theorem \ref{thm.3.17} extend the corresponding upper and lower bounds for the Davis--Wielandt radius established in several research articles (see \cite{zamani2020some,bhunia2021bounds,bhunia2023further}) for bounded linear operators to a more generalized setting of unital $C^*$-algebra.
\end{remark}

We now establish a new upper bound for the algebraic Davis--Wielandt radius of the sum of $k$ elements.

\begin{lemma}\label{lemma}\cite{vasic1971some}
	Suppose $\alpha_1,\alpha_2,\cdots,\alpha_k$ are $k$ positive numbers. Then
	\[
	\left (\sum_{i=1}^{k}\alpha_i\right )^n\leq k^{n-1}\sum_{i=1}^{k}\alpha_i^n\quad \mathrm{for}\ \mathrm{every}\ n\geq 1.
	\]
\end{lemma}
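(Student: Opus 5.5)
The statement is the classical power-mean inequality for finitely many positive reals. I would prove it by convexity of $t\mapsto t^n$ on $[0,\infty)$ for $n\ge 1$.

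\emph{Reduction to Jensen.} The map $\phi(t)=t^n$ has $\phi''(t)=n(n-1)t^{n-2}\ge 0$ for $t>0$ and is continuous at $0$, so it is convex on $[0,\infty)$. (For $n=1$ the claim is an equality, so I may assume $n>1$ if convenient.) Applying the finite Jensen inequality with equal weights $1/k$ to the points $\alpha_1,\dots,\alpha_k$ gives
\[
\left(\frac{1}{k}\sum_{i=1}^{k}\alpha_i\right)^n\leq \frac{1}{k}\sum_{i=1}^{k}\alpha_i^n .
\]
Multiplying both sides by $k^n$ yields $\left(\sum_{i=1}^k\alpha_i\right)^n\le k^{n-1}\sum_{i=1}^k\alpha_i^n$, which is the statement.

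\emph{Justifying finite Jensen.} If I want to avoid quoting Jensen, I would prove the weighted version by induction on $k$. The case $k=2$ is the definition of convexity. For the induction step, write
\[
\sum_{i=1}^{k}\lambda_i\alpha_i=\lambda_k\alpha_k+(1-\lambda_k)\sum_{i<k}\frac{\lambda_i}{1-\lambda_k}\alpha_i
\]
and apply the two-point inequality first, then the induction hypothesis.

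\emph{Alternative route via H\"older.} With exponents $n$ and $n'=n/(n-1)$,
\[
\sum_{i=1}^k 1\cdot\alpha_i\le k^{1/n'}\left(\sum_{i=1}^k\alpha_i^n\right)^{1/n}.
\]
Raising both sides to the power $n$ gives the same bound, since $n/n'=n-1$.

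\emph{Main obstacle.} There is essentially no obstacle. The only care needed is the convexity of $t^n$ for real (not necessarily integer) $n\ge1$ on the closed half-line, which is handled by the second-derivative check together with continuity at $0$.
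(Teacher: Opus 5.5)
Your proof is correct. The paper gives no argument of its own here; it simply cites Vasi\'c. Your equal-weight Jensen argument for the convex function $t\mapsto t^n$ (or the H\"older variant with exponents $n$ and $n/(n-1)$), followed by multiplication by $k^n$, is therefore a self-contained replacement for the citation. It also covers nonnegative $\alpha_i$, which is what Theorem~\ref{thm.3.22} actually needs when it applies the lemma to $\alpha_i=|f(a_i)|$.

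What writing out the proof adds beyond the citation is that it makes the scope explicit: the lemma rests only on \emph{scalar} convexity. It therefore justifies the step $\bigl(\sum_i|f(a_i)|\bigr)^{4n}\le k^{4n-1}\sum_i|f(a_i)|^{4n}$ in Theorem~\ref{thm.3.22}. It does not justify the operator inequality $\bigl(\sum_i|a_i|\bigr)^{8n}\le k^{8n-1}\sum_i|a_i|^{8n}$, which the same proof also attributes to this lemma. That inequality fails in general for noncommuting positive elements, already for $k=2$, because $t^{8n}$ is not operator convex.

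The affected term can be handled with numbers only. First, $f\bigl(|\sum_i a_i|^2\bigr)=\|\sum_i\pi(a_i)\xi\|^2\le\bigl(\sum_i\|\pi(a_i)\xi\|\bigr)^2\le k\sum_i f(|a_i|^2)$, by the triangle inequality in the GNS space and your lemma with exponent $2$. Then your lemma with exponent $4n$ and Lemma~\ref{nw lm1} give $f\bigl(|\sum_i a_i|^2\bigr)^{4n}\le k^{8n-1}f\bigl(\sum_i|a_i|^{8n}\bigr)$, the same bound the paper records.
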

The following inner product inequality (the Buzano's inequality \cite{buzano1974generalizzazione}) is required to prove the next result.
\begin{lemma}\label{lemma1}
	Let $x,y,z\in \mathscr{H}$ with $\|z\|=1$. Then
	\[
		2|\langle x, z\rangle\langle z, y\rangle|\leq |\langle x, y\rangle|+\|x\|\|y\|.
	\]
\end{lemma}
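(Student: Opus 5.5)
The statement is Buzano's inequality, for $x,y,z$ in a Hilbert space with $\|z\|=1$. I will reduce it to the Cauchy--Schwarz inequality applied to a reflection built from the rank-one projection onto $z$.

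Let $P=\langle\cdot,z\rangle z$ denote the orthogonal projection onto $\mathrm{span}\{z\}$. Then
\[
\langle Px,y\rangle=\langle x,z\rangle\langle z,y\rangle .
\]
Consider the operator $U=2P-I$. It is self-adjoint, and $U^2=4P^2-4P+I=I$, so $U$ is a unitary reflection. In particular $\|Ux\|=\|x\|$ for every $x$.

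Using the definition of $U$, we can write
\[
2\langle x,z\rangle\langle z,y\rangle=2\langle Px,y\rangle=\langle Ux,y\rangle+\langle x,y\rangle .
\]
We bound the right-hand side with the triangle inequality and then the Cauchy--Schwarz inequality on the term $|\langle Ux,y\rangle|$:
\[
2|\langle x,z\rangle\langle z,y\rangle|\leq |\langle x,y\rangle|+|\langle Ux,y\rangle|\leq |\langle x,y\rangle|+\|Ux\|\|y\|=|\langle x,y\rangle|+\|x\|\|y\|,
\]
which is the claimed inequality.

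The only step needing care is verifying that $U$ is an isometry. This follows from $U^*=U$ together with $U^2=I$, both of which are immediate because $P$ is a self-adjoint idempotent. No results from earlier in the paper are needed.
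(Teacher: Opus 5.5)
Your proof is correct. Because $\|z\|=1$, the operator $P=\langle\cdot,z\rangle z$ satisfies $P^2=P=P^*$. Hence $U=2P-I$ is a self-adjoint unitary. The identity $2\langle x,z\rangle\langle z,y\rangle=\langle Ux,y\rangle+\langle x,y\rangle$, followed by the triangle and Cauchy--Schwarz inequalities, gives exactly the claim.

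The paper does not prove this lemma. It quotes Buzano's inequality from \cite{buzano1974generalizzazione} and uses it as a black box in the proof of Theorem~\ref{thm.3.22}, so there is no in-paper argument to compare with. Your argument is the standard short reflection proof, which reduces Buzano's inequality to the fact that $2P-I$ is a contraction. It makes that step of the paper self-contained. It also shows the inequality holds in any complex inner product space, since completeness of $\mathscr{H}$ plays no role.
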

We now estimate an upper bound for the algebraic Davis--Wielandt radius of the sum of $k$ elements of $\mathfrak{A}$.
\begin{theorem}\label{thm.3.22}
	Let $a_i\in\mathfrak{A}$ for all $i\in\{1, 2,\cdots, k\},\ f\in\mathfrak{S}(\mathfrak{A})$. Then for all $\ n\geq1$,
	\[
		dw_{\mathfrak{A}}^{4n}\left(\sum_{i=1}^{k}a_i\right)\leq 2^{2n-2}k^{4n-1}\sum_{i=1}^{k}w_{\mathfrak{A}}(|a_i|^{2n}|a_i^*|^{2n})+2^{2n-3}k^{4n-1}\left\|\sum_{i=1}^{k}(|a_i|^{4n}+|a_i^*|^{4n})\right\|+2^{2n-1}k^{8n-1}\left\|\sum_{i=1}^{k}|a_i|^{8n}\right\|.
	\]
\end{theorem}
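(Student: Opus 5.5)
Fix $f\in\mathfrak{S}(\mathfrak{A})$ and use Theorem~\ref{gns} to write $f(\cdot)=\langle\pi(\cdot)\xi,\xi\rangle$ with $\|\xi\|=1$. The plan is to bound the two parts of $\bigl(|f(\sum a_i)|^2+f((\sum a_i)^*(\sum a_i))^2\bigr)^{2n}$ separately and then take the supremum over $f$. First, the elementary inequality $(x+y)^{2n}\le 2^{2n-1}(x^{2n}+y^{2n})$ for $x,y\ge0$ (Lemma~\ref{lemma} with $k=2$) gives
\[
\bigl(|f(\textstyle\sum a_i)|^2+f(|\sum a_i|^2)^2\bigr)^{2n}\le 2^{2n-1}\Bigl(|f(\textstyle\sum a_i)|^{4n}+f(|\sum a_i|^2)^{4n}\Bigr).
\]
The two terms are then treated separately. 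The first should produce the two $w_{\mathfrak{A}}$ and $\|\sum(|a_i|^{4n}+|a_i^*|^{4n})\|$ terms. The second should produce the $\|\sum|a_i|^{8n}\|$ term.

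\emph{First term.} By the triangle inequality and Lemma~\ref{lemma}, $|f(\sum a_i)|^{4n}\le k^{4n-1}\sum_i|f(a_i)|^{4n}$. For each $i$ I use Lemma~\ref{inequ5} to get $|f(a_i)|^{4n}\le \bigl(f(|a_i|)f(|a_i^*|)\bigr)^{2n}$. Lemma~\ref{nw lm1} then gives $f(|a_i|)^{2n}\le f(|a_i|^{2n})=\langle\pi(|a_i|^{2n})\xi,\xi\rangle$, and similarly for $|a_i^*|$. Now apply Buzano's inequality (Lemma~\ref{lemma1}) with $x=\pi(|a_i|^{2n})\xi$, $y=\pi(|a_i^*|^{2n})\xi$ and $z=\xi$. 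This yields
\[
f(|a_i|^{2n})f(|a_i^*|^{2n})\le \tfrac12\bigl(|f(|a_i|^{2n}|a_i^*|^{2n})|+\|\pi(|a_i|^{2n})\xi\|\,\|\pi(|a_i^*|^{2n})\xi\|\bigr).
\]
The product of norms is bounded by AM--GM as $\tfrac12\bigl(f(|a_i|^{4n})+f(|a_i^*|^{4n})\bigr)$. Summing over $i$ and bounding $f$ of a sum of positives by its norm gives the first term as at most $\frac{k^{4n-1}}{2}\bigl(\sum_i w_{\mathfrak{A}}(|a_i|^{2n}|a_i^*|^{2n})+\frac12\|\sum_i(|a_i|^{4n}+|a_i^*|^{4n})\|\bigr)$. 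Multiplying by $2^{2n-1}$ produces exactly the coefficients $2^{2n-2}k^{4n-1}$ and $2^{2n-3}k^{4n-1}$.

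\emph{Second term.} I bound $f(|\sum a_i|^2)=\|\pi(\sum a_i)\xi\|^2\le(\sum_i\|\pi(a_i)\xi\|)^2$. Raising to the power $4n$ and applying Lemma~\ref{lemma} with exponent $8n$ gives at most $k^{8n-1}\sum_i\|\pi(a_i)\xi\|^{8n}=k^{8n-1}\sum_i f(|a_i|^2)^{4n}$. Lemma~\ref{nw lm1} with $r=4n$ bounds this by $k^{8n-1}f(\sum_i|a_i|^{8n})\le k^{8n-1}\|\sum_i|a_i|^{8n}\|$. Multiplying by $2^{2n-1}$ gives the last summand.

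Finally, take the supremum over $f$ to obtain $dw_{\mathfrak{A}}^{4n}$ on the left. The main obstacle is the bookkeeping of constants in the first term. In particular, the combination of Lemma~\ref{inequ5}, Lemma~\ref{nw lm1} and Buzano must be arranged so that the factors $\tfrac12$ align with the stated powers of $2$. The identification $\pi(|a|)=|\pi(a)|$ from continuous functional calculus, as in Lemma~\ref{inequ5}, justifies moving functions of $a_i$ inside $\pi$.
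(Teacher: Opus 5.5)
Your proposal is correct. For the first summand, $|f(\sum_i a_i)|^{4n}$, it follows the paper's argument exactly: the triangle inequality, Lemma~\ref{lemma}, Lemma~\ref{inequ5}, Lemma~\ref{nw lm1} with $r=2n$, Buzano's inequality and AM--GM, with the same constants.

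The genuine difference is in the second summand, and there your route is better than the paper's. The paper first applies Lemma~\ref{nw lm1} to $|\sum_i a_i|^2$, obtaining $f(|\sum_i a_i|^2)^{4n}\le f(|\sum_i a_i|^{8n})$. It then asserts $f(|\sum_i a_i|^{8n})\le f((\sum_i|a_i|)^{8n})\le k^{8n-1}f(\sum_i|a_i|^{8n})$. This treats the scalar triangle and power-mean inequalities as operator inequalities, and they fail for non-commuting elements.

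For a counterexample, take matrix units in $\mathfrak{M}_2(\mathbb{C})$ with $a_1=E_{11}$ and $a_2=tE_{12}$. Then $|a_1|^8+|a_2|^8=\mathrm{diag}(1,t^8)$, while $|a_1+a_2|^8=(1+t^2)^3vv^*$ with $v=(1,t)^t$. Consider the vector state at $(t^4,\sqrt{1-t^8})$. There $f(|a_1+a_2|^8)$ is about $t^2$, whereas $2^7f(|a_1|^8+|a_2|^8)$ is about $2^8t^8$. So the paper's intermediate inequality fails for small $t$ already in the case $k=2$, $n=1$.

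Your argument avoids this. You apply the triangle inequality to the vectors $\pi(a_i)\xi$ in the GNS space and then use the scalar power-mean inequality. Only after that do you apply Lemma~\ref{nw lm1}, separately to each positive element $|a_i|^2$. Every step is legitimate, so your argument proves this part of the estimate at no cost in the constants.

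Two cosmetic points:
\begin{itemize}
\item Lemma~\ref{lemma} is stated for positive numbers; nonnegative ones are fine by continuity.
\item Buzano's inequality actually produces $f(|a_i^*|^{2n}|a_i|^{2n})$. This is the complex conjugate of $f(|a_i|^{2n}|a_i^*|^{2n})$, so it has the same modulus.
\end{itemize}
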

\begin{proof}
	Let $a_i\in\mathfrak{A}$ for all $i\in\{1, 2,\cdots, k\},\ f\in\mathfrak{S}(\mathfrak{A})$. By the convexity of $\phi(t)=t^{2n}$, $n\geq1$, we get
	\[
		\left(\left|f\left(\sum_{i=1}^{k}a_i\right)\right|^2+\left|f\left(\left|\sum_{i=1}^{k}a_i\right|^2\right)\right|^2\right)^{2n}\leq \dfrac{2^{2n}}{2}\left[\left|f\left (\sum_{i=1}^{k}a_i\right )\right|^{4n}+\left|f\left (\left|\sum_{i=1}^{k}a_i\right|^2\right )\right|^{4n}\right].
	\] Now,
	\begin{eqnarray*}
		& &\left|f\left (\sum_{i=1}^{k}a_i\right )\right|^{4n}+\left|f\left (\left|\sum_{i=1}^{k}a_i\right|^2\right )\right|^{4n} \\
		& \leq&\left (\sum_{i=1}^{k}\left|f(a_i)\right|\right )^{4n}+f\left (\left|\sum_{i=1}^{k}a_i\right|^{8n}\right ) \\
		& \leq& k^{4n-1}\sum_{i=1}^{k}|f(a_i)|^{4n}+f\left (\left (\sum_{i=1}^{k}|a_i|\right )^{8n}\right ) \quad\text{(by Lemma~\ref{lemma})} \\
		&\leq& k^{4n-1}\sum_{i=1}^{k}\left(f(|a_i|^{2n})f(|a_i^*|^{2n})\right)+f\left (k^{8n-1}\sum_{i=1}^{k}|a_i|^{8n}\right ) \quad\text{(by Lemmas~\ref{inequ5}, \ref{nw lm1} and \ref{lemma})} \\
		& \leq& k^{4n-1}\sum_{i=1}^{k}\langle\pi(|a_i|^{2n})\xi, \xi\rangle\langle\xi, \pi(|a_i^*|^{2n})\xi\rangle+k^{8n-1}f\left (\sum_{i=1}^{k}|a_i|^{8n}\right ) \quad\text{(by Theorem~\ref{gns})}\\
		& \leq& \dfrac{k^{4n-1}}{2}\sum_{i=1}^{k}\left(|\langle\pi(|a_i|^{2n})\xi, \pi(|a_i^*|^{2n})\xi\rangle|+\|\pi(|a_i|^{2n})\xi\|\|\pi(|a_i^*|^{2n})\xi\|\right )+k^{8n-1}f\left (\sum_{i=1}^{k}|a_i|^{8n}\right )\quad\text{(by Lemma~\ref{lemma1})} \\
		&  = &\dfrac{k^{4n-1}}{2}\sum_{i=1}^{k}\left(|\langle\pi(|a_i|^{2n}|a_i^*|^{2n})\xi, \xi\rangle|+\|\pi(|a_i|^{2n})\xi\|\|\pi(|a_i^*|^{2n})\xi\|\right)+k^{8n-1}f\left (\sum_{i=1}^{k}|a_i|^{8n}\right ) \\
		& \leq & \dfrac{k^{4n-1}}{2}\sum_{i=1}^{k}f(|a_i|^{2n}|a_i^*|^{2n})+\dfrac{k^{4n-1}}{4}\sum_{i=1}^{k}|f(|a_i|^{4n}+|a_i^*|^{4n})|+k^{8n-1}f\left (\sum_{i=1}^{k}|a_i|^{8n}\right )  \quad\text{(as, AM $\geq$ GM)} \\
		& \leq& \dfrac{k^{4n-1}}{2}\sum_{i=1}^{k}w_{\mathfrak{A}}(|a_i|^{2n}|a_i^*|^{2n})+\dfrac{k^{4n-1}}{4}\left\|\sum_{i=1}^{k}(|a_i|^{4n}+|a_i^*|^{4n})\right\|+k^{8n-1}\left\|\sum_{i=1}^{k}|a_i|^{8n}\right\|.
	\end{eqnarray*}
	Taking the supremum over $f\in \mathfrak{S}(\mathfrak{A})$, we have
	\begin{eqnarray*}
		&& dw_{\mathfrak{A}}^{4n}\left(\sum_{i=1}^{k}a_i\right) \\
		& \leq& 2^{2n-1}\left[\dfrac{k^{4n-1}}{2}\sum_{i=1}^{k}w_{\mathfrak{A}}(|a_i|^{2n}|a_i^*|^{2n})+\dfrac{k^{4n-1}}{4}\left\|\sum_{i=1}^{k}(|a_i|^{4n}+|a_i^*|^{4n})\right\|+k^{8n-1}\left\|\sum_{i=1}^{k}|a_i|^{8n}\right\|\right] \\
		& =&  2^{2n-2}k^{4n-1}\sum_{i=1}^{k}w_{\mathfrak{A}}(|a_i|^{2n}|a_i^*|^{2n})+2^{2n-3}k^{4n-1}\left\|\sum_{i=1}^{k}(|a_i|^{4n}+|a_i^*|^{4n})\right\|+2^{2n-1}k^{8n-1}\left\|\sum_{i=1}^{k}|a_i|^{8n}\right\|.
	\end{eqnarray*}
\end{proof}
\subsection*{Comparability of bounds}
At the end of this section, we would like to study the comparability of upper bounds for algebraic Davis--Wielandt radii of elements. We illustrate numerical examples to show that, in general, those upper bounds obtained by us are incomparable. We list a few examples by taking the $C^*$-algebra $\mathfrak{A}=\mathfrak{M}_2(\mathbb{C})$, the algebra of $2\times2$ complex matrices, and considering $A_1=\begin{pmatrix}
0 & 1\\
0 & 0
\end{pmatrix}$, $A_2=\begin{pmatrix}
0 & 2\\
0 & 0
\end{pmatrix}$, $A_3=\begin{pmatrix}
0 & 1\\
2 & 0
\end{pmatrix}$, $A_4=\begin{pmatrix}
1 & 1\\
0 & 1
\end{pmatrix}$, $A_5=\begin{pmatrix}
1 & 0\\
0 & -1
\end{pmatrix}$.

\begin{table}[H]
\centering
\small
\renewcommand{\arraystretch}{1}
\caption{Comparison of upper bounds}\label{tab:upperbounds}
\begin{tabular}{@{}lccccc@{}}
\hline
\multicolumn{6}{c}{Upper bounds for} \\
 & $dw_{\mathfrak{A}}^2(A_1)$ & $dw_{\mathfrak{A}}^2(A_2)$ & $dw_{\mathfrak{A}}^2(A_3)$ & $dw_{\mathfrak{A}}^2(A_4)$ & $dw_{\mathfrak{A}}^2(A_5)$\\
\hline
Theorem~\ref{thm.3.10}    & 1.414 & $\mathbf{16.4924}$ & 17.0294 & $\mathbf{9.0970}$ & $\mathbf{2}$\\
Theorem~\ref{thm.3.14}    & $\mathbf{1.25}$     & 17     & 18.25    & 9.104 & $\mathbf{2}$\\
Theorem~\ref{thm.3.17}    & $\mathbf{1.25}$  & 17     & $\mathbf{16.25}$ & 9.104 & $\mathbf{2}$\\
Theorem~\ref{thm.3.22}  & 1.58   & 22.8035 & 22.902 & 10.3178 & $\mathbf{2}$\\
\hline
\end{tabular}
\end{table}

    \section{Norm-Parallelism and the Algebraic Davis--Wielandt Radius}\label{sec4}

In this section, we investigate the relationship between norm-parallelism and the Davis--Wielandt radius within the structural framework of unital $C^*$-algebras. The algebraic Davis--Wielandt shell $DW_{\mathfrak{A}}(a)$ encodes refined geometric and structural information about an element $a \in \mathfrak{A}$ that extends well beyond the classical numerical range. Utilizing the baseline inequalities of Section \ref{sec3}, we show that norm-parallelism to the identity element $\mathbf{1}$ can be completely characterized by an elegant, explicit formula for $dw_{\mathfrak{A}}(a)$. Along the way, we demonstrate that this radius criteria serves as a bridge to characterizing normaloid elements, effectively linking spatial norm-attainment with state-space properties on the powers of the elements. We begin by recalling the formal definition of norm-parallelism and reviewing some of its fundamental properties.

\begin{definition}
Let $\mathfrak{A}$ be a unital $C^*$-algebra. Two elements $a, b \in \mathfrak{A}$ are said to be \textit{norm-parallel}, denoted by $a \parallel b$, if there exists a scalar $\lambda \in \mathbb{T}$ such that
\[
\|a + \lambda b\| = \|a\| + \|b\|.
\]
\end{definition}

In this context, the Daugavet equation $\|a + \boldsymbol{1}\| = \|a\| + 1$ arises as a special case of norm-parallelism. It is worth noting that norm-parallelism is a symmetric and homogeneous relation; however, it does not satisfy transitivity relation in general (see \cite{zamani2016norm}). Although linear dependence clearly implies norm-parallelism, the converse does not hold. Characterizations of norm-parallelism in the broader setting of Hilbert $C^*$-modules can be found in \cite{zamani2015exact,zamani2016norm}.

Throughout this section, we assume that $a$ is a non-zero element of $\mathfrak{A}$. To establish our main results, we shall frequently utilize \cite[Corollary~4.4]{zamani2015exact}, which we restate below for completeness.

\begin{lemma}\label{para cor} \cite[Corollary~4.4]{zamani2015exact}
Let $\mathfrak{A}$ be a $C^*$-algebra, and let $a, b \in \mathfrak{A}$. Then the following statements are equivalent:
\begin{enumerate}
    \item[(i)] $a \parallel b$.
    \item[(ii)] There exists a state $f \in \mathfrak{S}(\mathfrak{A})$ and a scalar $\lambda \in \mathbb{T}$ such that $f(a^*b) = \lambda \|a\| \|b\|$.
    \item[(iii)] There exists a Hilbert space $\mathscr{H}$, a representation $\pi : \mathfrak{A} \to \mathscr{B}(\mathscr{H})$, a unit vector $\xi \in \mathscr{H}$, and a scalar $\lambda \in \mathbb{T}$ such that $\|\pi(a)\xi\| = \|a\|$ and $\langle \pi(a)\xi, \pi(b)\xi \rangle = \lambda \|a\| \|b\|$.
\end{enumerate}
\end{lemma}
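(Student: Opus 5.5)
The plan is to prove the cycle (i)$\Rightarrow$(iii)$\Rightarrow$(ii)$\Rightarrow$(i), using the GNS construction (Theorem~\ref{gns}) to move between states and vectors. Throughout, the key identity is the Cauchy--Schwarz inequality of Lemma~\ref{cs inq}, $|f(a^*b)|^2\le f(a^*a)f(b^*b)\le \|a\|^2\|b\|^2$. Equality forces both $f(a^*a)=\|a\|^2$ and $f(b^*b)=\|b\|^2$.

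\emph{Step (ii)$\Rightarrow$(i).} Suppose $f(a^*b)=\lambda\|a\|\|b\|$. Then
\[
f\bigl((a+\bar\lambda b)^*(a+\bar\lambda b)\bigr)=f(a^*a)+2\Re\bigl(\bar\lambda f(a^*b)\bigr)+f(b^*b)=f(a^*a)+2\|a\|\|b\|+f(b^*b).
\]
By the equality case above, this equals $(\|a\|+\|b\|)^2$. Hence $\|a+\bar\lambda b\|^2\ge(\|a\|+\|b\|)^2$, and the triangle inequality gives equality. Since $\bar\lambda\in\mathbb{T}$, we get $a\parallel b$.

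\emph{Step (iii)$\Leftrightarrow$(ii).} Given $\pi,\xi$ as in (iii), set $f(x)=\langle\pi(x)\xi,\xi\rangle$. This is a state, and $f(a^*b)=\langle\pi(b)\xi,\pi(a)\xi\rangle$ is the conjugate of the given inner product, so (ii) holds with $\bar\lambda$. Conversely, from (ii) apply Theorem~\ref{gns} to get $\pi$ and a unit vector $\xi$. The equality case gives $\|\pi(a)\xi\|^2=f(a^*a)=\|a\|^2$, and $\langle\pi(a)\xi,\pi(b)\xi\rangle=\overline{f(a^*b)}=\bar\lambda\|a\|\|b\|$.

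\emph{Step (i)$\Rightarrow$(ii).} This is the main obstacle. Take $\mu\in\mathbb{T}$ with $\|a+\mu b\|=\|a\|+\|b\|$, and set $c=a+\mu b$. Since $c^*c$ is positive and the state space is weak$^*$-compact, there is a state $f$ with $f(c^*c)=\|c\|^2$; it is obtained as the norm-attaining functional on the commutative $C^*$-algebra generated by $c^*c$ and $\boldsymbol 1$, extended by Hahn--Banach. Expanding,
\[
(\|a\|+\|b\|)^2=f(a^*a)+2\Re\bigl(\mu f(a^*b)\bigr)+f(b^*b)\le \|a\|^2+2\|a\|\|b\|+\|b\|^2,
\]
where the inequality uses $|f(a^*b)|\le\|a\|\|b\|$. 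Equality must hold throughout, so $\Re(\mu f(a^*b))=\|a\|\|b\|=|f(a^*b)|$. Hence $\mu f(a^*b)=\|a\|\|b\|$, that is, $f(a^*b)=\bar\mu\|a\|\|b\|$, which is (ii) with $\lambda=\bar\mu\in\mathbb{T}$.

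The delicate point is the existence of a state attaining $\|c^*c\|$. I would cite the standard fact that $\|x\|=\sup_f f(x)$ for positive $x$, which the paper already uses in the proof of Theorem~\ref{supL}, together with weak$^*$-compactness of $\mathfrak{S}(\mathfrak{A})$ and weak$^*$-continuity of $f\mapsto f(x)$, so that the supremum is attained. The non-unital case can be handled by unitization.
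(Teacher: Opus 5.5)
The paper gives no proof of this lemma. It quotes it from \cite[Corollary~4.4]{zamani2015exact}, so there is no internal argument to match yours against.

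Your proof is a correct, self-contained argument, up to two small repairs listed below. Its key step (i)$\Rightarrow$(ii) uses the same device the paper applies in the forward direction of Lemma~\ref{norm lem}(i), where $b=\boldsymbol{1}$. There the paper takes a state attaining $\|c^*c\|$ (via \cite[Theorem~3.3.6]{murphy2014c}), expands it, and forces each term to its maximum. Your version runs this for arbitrary $b$, and the quadratic form $f\bigl((a+\bar\lambda b)^*(a+\bar\lambda b)\bigr)$ is the right object for that, since $a^*b$ is not a linear combination of $a$ and $b$. You announce the cycle (i)$\Rightarrow$(iii)$\Rightarrow$(ii)$\Rightarrow$(i) but actually prove (i)$\Rightarrow$(ii), (ii)$\Leftrightarrow$(iii) and (ii)$\Rightarrow$(i); that set of implications is logically sufficient.

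\textbf{First repair: the degenerate cases.} The equality case of Cauchy--Schwarz forces $f(a^*a)=\|a\|^2$ and $f(b^*b)=\|b\|^2$ only when $a\neq 0\neq b$.
\begin{itemize}
\item If $b=0\neq a$, then (ii) holds for \emph{every} state. So in (ii)$\Rightarrow$(iii) the GNS vector of the given $f$ need not satisfy $\|\pi(a)\xi\|=\|a\|$. You must replace $f$ by a state attaining $\|a^*a\|$, which is the same existence fact you already use in (i)$\Rightarrow$(ii).
\item In (ii)$\Rightarrow$(i) the degenerate cases are harmless, because (i) then holds trivially. Your displayed computation simply does not cover them.
\end{itemize}

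\textbf{Second repair: the functional in (iii)$\Rightarrow$(ii) need not be a state.} The paper defines a representation as an arbitrary $^*$-homomorphism, so $\pi(\boldsymbol{1})$ is only a projection. Hence $x\mapsto\langle\pi(x)\xi,\xi\rangle$ satisfies merely $f(\boldsymbol{1})=\|\pi(\boldsymbol{1})\xi\|^2\le 1$.
\begin{itemize}
\item For $a\neq 0$, the hypothesis $\|\pi(a)\xi\|=\|a\|$ fixes this. Indeed $\|a\|=\|\pi(a)\pi(\boldsymbol{1})\xi\|\le\|a\|\,\|\pi(\boldsymbol{1})\xi\|$ forces $\|\pi(\boldsymbol{1})\xi\|=1$, hence $\pi(\boldsymbol{1})\xi=\xi$, and $f$ is a state.
\item For $a=0$, (ii) holds with any state.
\end{itemize}
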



The spatial characterization of elements via the Gelfand--Naimark--Segal (GNS) construction allows us to transport problems regarding $C^*$-algebras into the setting of bounded linear operators on a Hilbert space. To this end, we establish an operator-theoretic result concerning operators that simultaneously attain their norms and numerical radii via the same unit vector. This foundational observation serves as a connection between norm-parallelism and the Davis--Wielandt radius.

\begin{theorem}\label{op thm}
Let $T \in \mathscr{B}(\mathscr{H})$. Suppose there exists a unit vector $x \in \mathscr{H}$ such that 
\[
|\langle Tx, x \rangle| = w(T) \quad \text{and} \quad \|Tx\| = \|T\|.
\]
Then $w(T) = \|T\|$.
\end{theorem}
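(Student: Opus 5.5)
The plan is to show that the given unit vector $x$ is an eigenvector of $T$ with eigenvalue of modulus $w(T)$; then $\|T\|=\|Tx\|=w(T)$. First normalize. Replacing $T$ by $e^{i\theta}T$ changes neither $w(T)$, $\|T\|$ nor the hypotheses, so we may assume $\langle Tx,x\rangle=w:=w(T)\ge 0$.

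\emph{Step 1: $x$ is an eigenvector of $\Re(T)$.} Since $\langle \Re(T)z,z\rangle=\Re\langle Tz,z\rangle\le w$ for every unit $z$, we have $\Re(T)\le wI$. Moreover $\langle(wI-\Re(T))x,x\rangle=0$, and $wI-\Re(T)\ge 0$. Positivity gives $(wI-\Re(T))x=0$, i.e.
\[
T^*x=2wx-Tx .
\]

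\emph{Step 2: $x$ is an eigenvector of $T^*T$.} Since $\|Tx\|=\|T\|$, we get $\langle(\|T\|^2I-T^*T)x,x\rangle=0$ with $\|T\|^2I-T^*T\ge0$. Hence $T^*Tx=\|T\|^2x$.

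\emph{Step 3: decompose $Tx$.} Write $Tx=wx+\beta y$ with $y\perp x$ a unit vector and $\beta\in\mathbb{C}$. This is possible because $\langle Tx,x\rangle=w$. Then $\|T\|^2=w^2+|\beta|^2$. By Step 1, $T^*x=wx-\beta y$.

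\emph{Step 4: compute $T^*y$.} Apply $T^*$ to $Tx$ and use Step 2:
\[
\|T\|^2x=wT^*x+\beta T^*y=w^2x-w\beta y+\beta T^*y .
\]
If $\beta\neq0$, this gives $T^*y=\overline{\beta}x+wy$. Consequently $\langle Ty,y\rangle=w$ and $\langle Ty,x\rangle=\langle y,T^*x\rangle=-\overline{\beta}$.

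\emph{Step 5: contradiction when $\beta\ne0$.} Consider the unit vectors $z_\varphi=(x+e^{i\varphi}y)/\sqrt2$. Using Step 4,
\[
\langle Tz_\varphi,z_\varphi\rangle=\tfrac12\big(w+e^{i\varphi}\langle Ty,x\rangle+e^{-i\varphi}\langle Tx,y\rangle+w\big)=w+\tfrac12\big(e^{-i\varphi}\beta-\overline{e^{-i\varphi}\beta}\big)=w+i\,\Im(e^{-i\varphi}\beta).
\]
Choose $\varphi$ with $e^{-i\varphi}\beta=i|\beta|$. Then $|\langle Tz_\varphi,z_\varphi\rangle|^2=w^2+|\beta|^2>w^2$, contradicting $w=w(T)$. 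Hence $\beta=0$, so $Tx=wx$ and $\|T\|=\|Tx\|=w(T)$.

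The main obstacle is Step 4. The algebra must be arranged so that Steps 1 and 2 together pin down the $2\times2$ compression of $T$ to $\mathrm{span}\{x,y\}$ completely. Once that compression is known, the perturbation in Step 5 is a direct computation. If Step 4 misbehaves, the fallback is to compute that compression directly and apply the elementary fact that a $2\times2$ matrix attaining its numerical radius at $e_1$ with $\langle Ae_1,e_1\rangle=w$ and $\|Ae_1\|=\|A\|$ must be upper-triangular-free in the first column.
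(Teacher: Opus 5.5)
Your proof is correct and follows essentially the same route as the paper. Both arguments write $Tx=wx+\beta y$ with $y\perp x$, use $T^*Tx=\|T\|^2x$ together with maximality of the numerical value at $x$ to pin down the compression of $T$ to $\mathrm{span}\{x,y\}$ as $\begin{pmatrix} w & -\overline{\beta}\\ \beta & w\end{pmatrix}$, and then test a unit vector $(x+e^{i\varphi}y)/\sqrt2$ to obtain a numerical value of modulus $\sqrt{w^2+|\beta|^2}>w$.

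The one difference is how you get the off-diagonal entry $\langle Ty,x\rangle=-\overline{\beta}$. You use $wI-\Re(T)\ge 0$ together with $\langle (wI-\Re(T))x,x\rangle=0$, whereas the paper does a first-order Taylor expansion of $|\langle Ty_{t,\theta},y_{t,\theta}\rangle|$. Your version is tidier: it reuses the same positivity argument as the norm step, and it avoids the $O(t^2)$ bookkeeping and the assumption $w>0$.
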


\begin{proof}
Let $\alpha = \langle Tx, x \rangle$. Since $|\alpha| = w(T)$, there exists $\lambda \in \mathbb{T}$ such that $\lambda \alpha = w(T)$. By replacing $T$ with $\lambda T$, we preserve both the norm and the numerical radius, i.e., $\|\lambda T\| = \|T\|$ and $w(\lambda T) = w(T)$. Thus, without loss of generality, we may assume that $\alpha = w(T) > 0$. 

We decompose $Tx$ as 
\[
Tx = \alpha x + u, \quad \text{where } u \perp x.
\]
Consequently, the norm of $T$ satisfies
\[
\|T\|^2 = \|Tx\|^2 = \|\alpha x + u\|^2 = \|\alpha x\|^2 + \|u\|^2 = \alpha^2 + \|u\|^2.
\]
We aim to show that $u = 0$. Seeking a contradiction, suppose that $u \neq 0$. Let $c = \|u\| > 0$ and define the unit vector $v = \frac{u}{c}$, so that $u = cv$ with $v \perp x$. Let $z = \langle Tv, x \rangle$. For any $\theta \in \mathbb{R}$ and sufficiently small $t \in \mathbb{R}$, define the unit vector
\[
y_{t,\theta} = \frac{x + t e^{i\theta} v}{\sqrt{1 + t^2}}.
\]
Clearly, $\|y_{t,\theta}\| = 1$. Consider the function $p_{\theta}(t) = \langle Ty_{t,\theta}, y_{t,\theta} \rangle$. Since $y_{t,\theta}$ is a unit vector, we have $|p_{\theta}(t)| \leq w(T) = \alpha$. Moreover, $p_{\theta}(0) = \langle Tx, x \rangle = \alpha$. Noting that $\langle Tx, v \rangle = \langle \alpha x + cv, v \rangle = c$, a direct computation yields
\begin{align*}
    p_{\theta}(t) &= \frac{\langle T(x + t e^{i\theta} v), x + t e^{i\theta} v \rangle}{1 + t^2} \\
    &= \frac{\langle Tx, x \rangle + t e^{i\theta} \langle Tv, x \rangle + t e^{-i\theta} \langle Tx, v \rangle + t^2 \langle Tv, v \rangle}{1 + t^2} \\
    &= \alpha + t(e^{i\theta} z + e^{-i\theta} c) + O(t^2) \quad \text{as } t \to 0,
\end{align*}
where we used the expansion $\frac{1}{1+t^2} = 1 + O(t^2)$. It follows that
\[
|p_{\theta}(t)|^2 = \alpha^2 + 2\alpha t \Re\left(e^{i\theta} z + e^{-i\theta} c\right) + O(t^2).
\]
Employing the Taylor expansion $\sqrt{1+s} = 1 + \frac{s}{2} + O(s^2)$, we obtain
\[
|p_{\theta}(t)| = \alpha \sqrt{1 + \frac{2t}{\alpha} \Re\left(e^{i\theta} z + e^{-i\theta} c\right) + O(t^2)} = \alpha + t \Re\left(e^{i\theta} z + e^{-i\theta} c\right) + O(t^2).
\]
Since $|p_{\theta}(t)| \leq \alpha$ for all sufficiently small $t$, the coefficient of $t$ must vanish identically. Otherwise, a suitable choice of the sign of $t$ would yield $|p_{\theta}(t)| > \alpha$, a contradiction. Therefore, $\Re\left(e^{i\theta} z + e^{-i\theta} c\right) = 0$ for all $\theta \in \mathbb{R}$. This implies $z = -c$, and hence $\langle Tv, x \rangle = -c$.

Because $\|Tx\| = \|T\|$ with $\|x\| = 1$, the vector $x$ maximizes the norm of $T$, which implies $T^* T x = \|T\|^2 x$. Utilizing this relation, we find
\[
0 = \langle \|T\|^2 x, v \rangle = \langle T^* T x, v \rangle = \langle Tx, Tv \rangle = \langle \alpha x + cv, Tv \rangle = \alpha \langle x, Tv \rangle + c \langle v, Tv \rangle = -\alpha c + c \langle Tv, v \rangle.
\]
Since $c > 0$, we deduce that $\langle Tv, v \rangle = \alpha$. 

Now, consider the compression of $T$ to the two-dimensional subspace spanned by $\{x, v\}$. The corresponding compression matrix $A$ is given by
\[
A = \begin{pmatrix}
    \langle Tx, x \rangle & \langle Tv, x \rangle \\
    \langle Tx, v \rangle & \langle Tv, v \rangle
\end{pmatrix}
= \begin{pmatrix}
    \alpha & -c \\
    c & \alpha
\end{pmatrix}.
\]
By the properties of compressions, $w(A) \leq w(T) = \alpha$. However, taking the unit vector $\beta = \frac{1}{\sqrt{2}} \begin{pmatrix} 1 \\ i \end{pmatrix}$, we find
\[
\langle A\beta, \beta \rangle = \frac{1}{2} \begin{pmatrix} 1 & -i \end{pmatrix} \begin{pmatrix} \alpha & -c \\ c & \alpha \end{pmatrix} \begin{pmatrix} 1 \\ i \end{pmatrix} = \frac{1}{2} [(\alpha - ic) - i(c + i\alpha)] = \alpha - ic.
\]
Thus, $|\langle A\beta, \beta \rangle| = \sqrt{\alpha^2 + c^2}$, implying that $w(A) \geq \sqrt{\alpha^2 + c^2} > \alpha$, which contradicts $w(A) \leq \alpha$. 

Consequently, $u = 0$, meaning $Tx = \alpha x$. Finally,
\[
\|T\| = \|Tx\| = \|\alpha x\| = \alpha = |\langle Tx, x \rangle| = w(T).
\]
\end{proof}

We are now in a position to present our first main connection between norm-parallelism to the identity and the algebraic Davis--Wielandt radius.

\begin{theorem}\label{norm thm}
Let $a \in \mathfrak{A}$. Then the following conditions are equivalent:
\begin{enumerate}
    \item[(i)] $a \parallel \boldsymbol{1}$.
    \item[(ii)] $dw_{\mathfrak{A}}(a) = \sqrt{w_{\mathfrak{A}}^2(a) + \|a\|^4}$.
\end{enumerate}
\end{theorem}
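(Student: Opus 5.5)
The plan is to reduce both directions to Lemma~\ref{para cor} with $b=\boldsymbol{1}$. In that case condition (ii) of the lemma reads $f(a^*)=\lambda\|a\|$ for some state $f$ and some $\lambda\in\mathbb{T}$. Since $f(a^*)=\overline{f(a)}$, this says exactly that
\[
a\parallel\boldsymbol{1}\iff |f(a)|=\|a\| \text{ for some } f\in\mathfrak{S}(\mathfrak{A}).
\]
The upper bound $dw_{\mathfrak{A}}(a)\le\sqrt{w_{\mathfrak{A}}^2(a)+\|a\|^4}$ is already available from Theorem~\ref{thm.3.14}, or from \eqref{dwinq}. So in both directions the real content is about when this bound is attained.

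\emph{(i)$\Rightarrow$(ii).} Take $f$ with $|f(a)|=\|a\|$. Then $\|a\|=|f(a)|\le w_{\mathfrak{A}}(a)\le\|a\|$ by \eqref{*norm}, so $w_{\mathfrak{A}}(a)=\|a\|=|f(a)|$. Lemma~\ref{cs inq} gives $f(a^*a)\ge|f(a)|^2=\|a\|^2$. We also have $f(a^*a)\le\|a^*a\|=\|a\|^2$, so $f(a^*a)=\|a\|^2$. Hence
\[
dw_{\mathfrak{A}}^2(a)\ge|f(a)|^2+f(a^*a)^2=w_{\mathfrak{A}}^2(a)+\|a\|^4,
\]
and combining this with the upper bound gives equality.

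\emph{(ii)$\Rightarrow$(i).} First I show the supremum defining $dw_{\mathfrak{A}}(a)$ is attained. The map $f\mapsto|f(a)|^2+f(a^*a)^2$ is weak$^*$-continuous on the weak$^*$-compact set $\mathfrak{S}(\mathfrak{A})$, so there is a state $f$ with
\[
|f(a)|^2+f(a^*a)^2=w_{\mathfrak{A}}^2(a)+\|a\|^4.
\]
Each term is bounded by its counterpart: $|f(a)|\le w_{\mathfrak{A}}(a)$ and $f(a^*a)\le\|a\|^2$. Therefore $|f(a)|=w_{\mathfrak{A}}(a)$ and $f(a^*a)=\|a\|^2$ hold simultaneously.

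Next I pass to the GNS representation from Theorem~\ref{gns}: a representation $\pi$ and a unit vector $\xi$ with $f(\cdot)=\langle\pi(\cdot)\xi,\xi\rangle$. Set $T=\pi(a)$. Then
\[
\|T\xi\|^2=f(a^*a)=\|a\|^2 \quad\text{and}\quad |\langle T\xi,\xi\rangle|=w_{\mathfrak{A}}(a).
\]
To apply Theorem~\ref{op thm} I need $\|T\|=\|a\|$ and $w(T)=w_{\mathfrak{A}}(a)$.
\begin{itemize}
\item For the norm: $\|T\|\le\|a\|$ because representations are contractive, and $\|T\xi\|=\|a\|$ forces equality.
\item For the numerical radius: for any unit $\eta$, the functional $x\mapsto\langle\pi(x)\eta,\eta\rangle$ is a state of $\mathfrak{A}$, so $w(T)\le w_{\mathfrak{A}}(a)$. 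The vector $\xi$ attains $w_{\mathfrak{A}}(a)$, so equality holds.
\end{itemize}
Theorem~\ref{op thm} then yields $w(T)=\|T\|$, that is, $|f(a)|=w_{\mathfrak{A}}(a)=\|a\|$. By the first paragraph, $a\parallel\boldsymbol{1}$.

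The main obstacle I expect is this last step: turning joint attainment, $|f(a)|=w_{\mathfrak{A}}(a)$ together with $f(a^*a)=\|a\|^2$ at the same state, into $|f(a)|=\|a\|$. This needs the attainment of the supremum and the careful identification of $\|\pi(a)\|$ and $w(\pi(a))$ with their algebraic counterparts, so that Theorem~\ref{op thm} applies.
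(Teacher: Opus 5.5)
Your proposal is correct and follows essentially the same route as the paper. For (i)$\Rightarrow$(ii) you produce a state with $|f(a)|=\|a\|$ and $f(a^*a)=\|a\|^2$ and compare with the upper bound in \eqref{dwinq}; for (ii)$\Rightarrow$(i) you extract a state attaining both suprema, pass to the GNS representation and invoke Theorem~\ref{op thm}. The differences are minor. You get $f(a^*a)=\|a\|^2$ from Lemma~\ref{cs inq} rather than from part (iii) of Lemma~\ref{para cor}, you obtain the maximizing state by weak$^*$-continuity rather than a subnet argument, and you write out the checks $\|\pi(a)\|=\|a\|$ and $w(\pi(a))=w_{\mathfrak{A}}(a)$, which the paper leaves to the reader.
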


\begin{proof}
(i) $\implies$ (ii): Let $a \parallel \boldsymbol{1}$. By Lemma~\ref{para cor}, there exists a Hilbert space $\mathscr{H}$, a representation $\pi : \mathfrak{A} \to \mathscr{B}(\mathscr{H})$, a unit vector $\xi \in \mathscr{H}$, and a scalar $\lambda \in \mathbb{T}$ such that
\[
\|\pi(a)\xi\|^2 = \langle \pi(a^*a)\xi, \xi \rangle = \|a\|^2 \quad \text{and} \quad \langle \pi(a)\xi, \xi \rangle = \lambda \|a\|.
\]
By the Gelfand--Naimark--Segal (GNS) construction, there exists a state $f \in \mathfrak{S}(\mathfrak{A})$ such that
\[
f(a^*a) = \langle \pi(a^*a)\xi, \xi \rangle = \|a\|^2 \quad \text{and} \quad f(a) = \langle \pi(a)\xi, \xi \rangle = \lambda \|a\|.
\]
It follows that $w_{\mathfrak{A}}(a) \geq |f(a)| = \|a\|$, and by combining this with inequality \eqref{*norm}, we obtain $w_{\mathfrak{A}}(a) = \|a\|$. From the definition of the algebraic Davis--Wielandt radius, we have
\begin{align}\label{equal}
    dw_{\mathfrak{A}}(a) &\geq \sqrt{|f(a)|^2 + f(a^*a)^2} \nonumber \\
    &= \sqrt{\|a\|^2 + \|a\|^4} \nonumber \\
    &= \sqrt{w_{\mathfrak{A}}^2(a) + \|a\|^4}.
\end{align}
Matching this with the upper bound provided by inequality \eqref{dwinq}, we conclude that $dw_{\mathfrak{A}}(a) = \sqrt{w_{\mathfrak{A}}^2(a) + \|a\|^4}$.

(ii) $\implies$ (i): Assume that $dw_{\mathfrak{A}}(a) = \sqrt{w_{\mathfrak{A}}^2(a) + \|a\|^4}$. By the definition of $dw_{\mathfrak{A}}(a)$, there exists a net of states $\{f_{\alpha}\}$ in $\mathfrak{S}(\mathfrak{A})$ such that
\[
\sqrt{|f_{\alpha}(a)|^2 + f_{\alpha}(a^*a)^2} \longrightarrow \sqrt{ w_{\mathfrak{A}}^2(a) + \|a\|^4},
\]
which implies
\begin{equation}\label{new1}
|f_{\alpha}(a)| \longrightarrow w_{\mathfrak{A}}(a) \quad \text{and} \quad f_{\alpha}(a^*a) \longrightarrow \|a\|^2.
\end{equation}
Since the state space $\mathfrak{S}(\mathfrak{A})$ is weak$^*$-compact, we can extract a weak$^*$-convergent subnet $\{f_{\alpha_\beta}\}$ and a state $f \in \mathfrak{S}(\mathfrak{A})$ such that $f_{\alpha_\beta} \xrightarrow{w^*} f$. Therefore,
\begin{equation}\label{new2}
f_{\alpha_\beta}(a) \longrightarrow f(a) \quad \text{and} \quad f_{\alpha_\beta}(a^*a) \longrightarrow f(a^*a).
\end{equation}
From \eqref{new1} and \eqref{new2}, we infer that 
\[
|f(a)| = w_{\mathfrak{A}}(a) \quad \text{and} \quad f(a^*a) = \|a\|^2.
\]
By the GNS representation, there exists a Hilbert space $\mathscr{H}$, a representation $\pi : \mathfrak{A} \to \mathscr{B}(\mathscr{H})$, and a unit vector $\xi \in \mathscr{H}$ such that 
\[
|\langle \pi(a)\xi, \xi \rangle| = w_{\mathfrak{A}}(a) \quad \text{and} \quad \|\pi(a)\xi\|^2 = \|a\|^2.
\]
One can easily verify that $w(\pi(a)) = w_{\mathfrak{A}}(a)$ and $\|\pi(a)\| = \|a\|$. Since $\pi(a)$ is simultaneously norm-attaining and numerical radius attaining with respect to the same vector $\xi$, Theorem~\ref{op thm} ensures that $w(\pi(a)) = \|\pi(a)\|$, whence $w_{\mathfrak{A}}(a) = \|a\|$. This yields $|f(a)| = \|a\|$, and Lemma~\ref{para cor} implies $a \parallel \boldsymbol{1}$.
\end{proof}

Recall that an element $a \in \mathfrak{A}$ is called \emph{normaloid} if its spectral radius coincides with its norm, i.e., $r_{\mathfrak{A}}(a) = \|a\|$, where $r_{\mathfrak{A}}(a) = \sup\{|\omega| : \omega \in \sigma_{\mathfrak{A}}(a)\}$ and $\sigma_{\mathfrak{A}}(a)$ denotes the spectrum of $a$. It is well known that $a \in \mathfrak{A}$ is normaloid if and only if $w_{\mathfrak{A}}(a) = \|a\|$. Every normal, unitary, or self-adjoint element is obviously normaloid.

As an immediate consequence of Theorem~\ref{norm thm}, we obtain several characterizations of normaloid elements via the Davis--Wielandt radius.

\begin{corollary}\label{norm corol}
Let $a \in \mathfrak{A}$. Then the following assertions are equivalent:
\begin{enumerate}
    \item[(i)] $dw_{\mathfrak{A}}(a) = \sqrt{w_{\mathfrak{A}}^2(a) + \|a\|^4}$.
    \item[(ii)] $a$ is normaloid.
    \item[(iii)] $dw_{\mathfrak{A}}(a) = \|a\|\sqrt{1 + \|a\|^2}$.
    \item[(iv)] $a^*a \leq w_{\mathfrak{A}}^2(a)\boldsymbol{1}$.
\end{enumerate}
\end{corollary}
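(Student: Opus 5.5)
The plan is to route everything through Theorem~\ref{norm thm}, the well-known fact that $a$ is normaloid iff $w_{\mathfrak{A}}(a)=\|a\|$, and Lemma~\ref{para cor}. I will prove (i)$\iff$(ii), then (ii)$\iff$(iii), then (ii)$\iff$(iv).

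\textbf{(i)$\iff$(ii).} By Theorem~\ref{norm thm}, (i) is equivalent to $a\parallel\boldsymbol{1}$. By Lemma~\ref{para cor}(ii) with $b=\boldsymbol{1}$, $a\parallel\boldsymbol{1}$ means there is a state $f$ with $|f(a)|=\|a\|$, i.e. $w_{\mathfrak{A}}(a)\ge\|a\|$. Here I need the supremum defining $w_{\mathfrak{A}}(a)$ to be attained, which holds by weak$^*$-compactness of $\mathfrak{S}(\mathfrak{A})$. Combined with \eqref{*norm}, this says $w_{\mathfrak{A}}(a)=\|a\|$, which is normaloidness. Conversely, if $w_{\mathfrak{A}}(a)=\|a\|$, take a state attaining $|f(a)|=\|a\|$; then $f(a^*\boldsymbol{1})=\overline{f(a)}=\lambda\|a\|$ with $\lambda\in\mathbb{T}$, so $a\parallel\boldsymbol{1}$.

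\textbf{(ii)$\iff$(iii).} If $w_{\mathfrak{A}}(a)=\|a\|$, then (i) gives $dw_{\mathfrak{A}}(a)=\sqrt{\|a\|^2+\|a\|^4}=\|a\|\sqrt{1+\|a\|^2}$. Conversely, assume (iii). From \eqref{dwinq}, $\|a\|^2+\|a\|^4=dw_{\mathfrak{A}}^2(a)\le w_{\mathfrak{A}}^2(a)+\|a\|^4$, so $w_{\mathfrak{A}}(a)\ge\|a\|$, hence equality by \eqref{*norm}.

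\textbf{(ii)$\iff$(iv).} If $w_{\mathfrak{A}}(a)=\|a\|$, then $a^*a\le\|a^*a\|\boldsymbol{1}=\|a\|^2\boldsymbol{1}=w_{\mathfrak{A}}^2(a)\boldsymbol{1}$, using that a positive element lies below its norm times the unit. Conversely, $a^*a\le w_{\mathfrak{A}}^2(a)\boldsymbol{1}$ gives $\|a\|^2=\|a^*a\|\le w_{\mathfrak{A}}^2(a)$, since $0\le x\le y$ implies $\|x\|\le\|y\|$. Thus $\|a\|\le w_{\mathfrak{A}}(a)$, and equality follows from \eqref{*norm}.

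The only real obstacle is the first step, where the equivalence with normaloidness rests on the standard fact $r_{\mathfrak{A}}(a)=\|a\|\iff w_{\mathfrak{A}}(a)=\|a\|$, quoted in the text just before the corollary, together with attainment of $w_{\mathfrak{A}}(a)$ by some state. The remaining steps are routine order-theoretic manipulations.
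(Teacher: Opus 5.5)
Your proof is correct and follows essentially the paper's route. Both arguments reduce (i)$\iff$(ii) to Theorem~\ref{norm thm}, handle (iii) by the same sandwich with \eqref{dwinq}, and settle (iv) by comparing $\|a^*a\|=\|a\|^2$ with $w_{\mathfrak{A}}^2(a)$. You are more explicit in two places: you derive normaloid $\Rightarrow a\parallel\boldsymbol{1}$ from weak$^*$-attainment of $w_{\mathfrak{A}}(a)$ plus Lemma~\ref{para cor}(ii), which the paper leaves to ``the proof of Theorem~\ref{norm thm}'', and for (iv) you use the order facts $x\le\|x\|\boldsymbol{1}$ and norm monotonicity on positives instead of the paper's state-supremum phrasing, with both directions stated cleanly.
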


\begin{proof}
The equivalence (i) $\iff$ (ii) follows directly from the proof of Theorem~\ref{norm thm}.

\noindent (i) $\implies$ (iii): This implication follows immediately from the equivalence (i) $\iff$ (ii) by substituting $w_{\mathfrak{A}}(a) = \|a\|$.

\noindent (iii) $\implies$ (i): Let $dw_{\mathfrak{A}}(a) = \|a\|\sqrt{1 + \|a\|^2}$. Since $w_{\mathfrak{A}}(a) \leq \|a\|$, inequality \eqref{dwinq} gives
\[
\|a\|\sqrt{1 + \|a\|^2} = dw_{\mathfrak{A}}(a) \leq \sqrt{w_{\mathfrak{A}}^2(a) + \|a\|^4} \leq \|a\|\sqrt{1 + \|a\|^2}.
\]
Thus, the inequalities collapse to equalities, proving (i).

\noindent (i) $\iff$ (iv): From the equivalence (i) $\iff$ (ii), we know that $dw_{\mathfrak{A}}(a) = \sqrt{w_{\mathfrak{A}}^2(a) + \|a\|^4}$ if and only if $a$ is normaloid, which is equivalent to $w_{\mathfrak{A}}(a) = \|a\|$. This equality holds if and only if $\sup_{f \in \mathfrak{S}(\mathfrak{A})} f(a^*a) = \|a^*a\| = \|a\|^2 = w_{\mathfrak{A}}^2(a)$, implying that $f(a^*a) \leq w_{\mathfrak{A}}^2(a)$ for all $f \in \mathfrak{S}(\mathfrak{A})$. This is equivalent to stating that $f(w_{\mathfrak{A}}^2(a)\boldsymbol{1} - a^*a) \geq 0$ for every state $f$, which yields the desired positive operator inequality $a^*a \leq w_{\mathfrak{A}}^2(a)\boldsymbol{1}$.
\end{proof}

\begin{remark}
    Corollary \ref{norm corol} proves that the equality $dw_{\mathfrak{A}}(a) = \sqrt{w_{\mathfrak{A}}^2(a) + \|a\|^4}$ holds in Theorem \ref{dw-prop} if and only if $a$ is normaloid.
\end{remark}

\begin{corollary}
Let $a \in \mathfrak{A}$. Then the following conditions are equivalent:
\begin{enumerate}
    \item[(i)] $dw_{\mathfrak{A}}(a) = \sqrt{w_{\mathfrak{A}}^2(a) + \|a\|^4}$.
    \item[(ii)] There exists a state $f \in \mathfrak{S}(\mathfrak{A})$ such that $|f(a)| = \|a\|$.
\end{enumerate}
\end{corollary}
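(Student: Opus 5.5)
The plan is to route both directions through Theorem~\ref{norm thm}, Lemma~\ref{para cor} and inequality \eqref{*norm}. Condition (ii) says precisely that some state attains $\|a\|$ in modulus at $a$. This is essentially $a\parallel\boldsymbol{1}$ as characterized in Lemma~\ref{para cor}(ii), once the states are applied to $a^*\boldsymbol{1}=a^*$.

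For (ii) $\implies$ (i), I take $f\in\mathfrak{S}(\mathfrak{A})$ with $|f(a)|=\|a\|$. Since $f(a^*)=\overline{f(a)}$, we get $f(a^*\boldsymbol{1})=\lambda\|a\|\|\boldsymbol{1}\|$ with $\lambda=\overline{f(a)}/\|a\|\in\mathbb{T}$; here $a\neq0$, as assumed throughout the section. Lemma~\ref{para cor} then gives $a\parallel\boldsymbol{1}$, and Theorem~\ref{norm thm} yields (i). A direct route also works: Lemma~\ref{cs inq} gives $\|a\|^2=|f(a)|^2\le f(a^*a)\le\|a\|^2$, so $f(a^*a)=\|a\|^2$. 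The single state $f$ then realizes $\sqrt{\|a\|^2+\|a\|^4}$, and since $w_{\mathfrak{A}}(a)=\|a\|$ by \eqref{*norm}, this matches the upper bound in \eqref{dwinq}.

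For (i) $\implies$ (ii), Theorem~\ref{norm thm} gives $a\parallel\boldsymbol{1}$. Lemma~\ref{para cor}(ii) then supplies a state $f$ and $\lambda\in\mathbb{T}$ with $f(a^*)=\lambda\|a\|$. Hence $|f(a)|=|\overline{f(a^*)}|=\|a\|$. Alternatively, the proof of (ii) $\implies$ (i) in Theorem~\ref{norm thm} already produces, via weak$^*$ compactness, a state $f$ with $|f(a)|=w_{\mathfrak{A}}(a)=\|a\|$, and I could simply cite that.

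The only delicate point is bookkeeping. Lemma~\ref{para cor} is phrased with $f(a^*b)$, so for $b=\boldsymbol{1}$ the states are evaluated at $a^*$ rather than $a$, and I must pass to conjugates using $f(a^*)=\overline{f(a)}$, valid because states are positive and hence Hermitian. Everything else follows directly from results already established in the paper.
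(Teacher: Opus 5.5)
Your proposal is correct and follows the paper's argument: the paper combines Theorem~\ref{norm thm} with Lemma~\ref{para cor} for $b=\boldsymbol{1}$, and your conjugation step $f(a^*)=\overline{f(a)}$ fills in bookkeeping the paper leaves implicit. Your alternative direct argument for (ii)$\implies$(i) is also valid and avoids the detour through norm-parallelism; it uses Lemma~\ref{cs inq} to force $f(a^*a)=\|a\|^2$ and then the upper bound in \eqref{dwinq}.
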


\begin{proof}
The assertion follows immediately from Lemma~\ref{para cor} and Theorem~\ref{norm thm}.
\end{proof}

We next examine the behavior of norm-parallelism when applied to powers and adjoints of an element parallel to the identity.

\begin{lemma}\label{norm lem}
Let $a \in \mathfrak{A}$. Then the following assertions hold:
\begin{enumerate}
    \item[(i)] $a \parallel \boldsymbol{1} \iff a^*a \parallel a^*$.
    \item[(ii)] $a \parallel \boldsymbol{1} \iff a \parallel a^*$.
\end{enumerate}
\end{lemma}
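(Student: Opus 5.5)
The plan is to work throughout with the state characterization in Lemma~\ref{para cor}(ii) and its GNS form (iii). First I will reformulate $a\parallel\boldsymbol{1}$: by Lemma~\ref{para cor}, $a\parallel \boldsymbol 1$ if and only if there is a state $f$ with $|f(a)|=\|a\|$. Write $f(a)=\langle T\xi,\xi\rangle$ with $T=\pi(a)$, $\|T\|\le\|a\|$ and $\|\xi\|=1$. The case of equality in Cauchy--Schwarz then forces $T\xi=\lambda\|a\|\xi$ for some $\lambda\in\mathbb T$. It follows that $\|T\xi\|=\|a\|\ge\|T\|$, so $\xi$ is a norm-attaining vector, and hence $T^*T\xi=\|a\|^2\xi$. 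Applying $T^*$ to $T\xi=\lambda\|a\|\xi$ gives $T^*\xi=\overline{\lambda}\|a\|\xi$. Thus $a\parallel\boldsymbol1$ yields a unit vector $\xi$ that is a common eigenvector of $T$ and $T^*$, with eigenvalues of modulus $\|a\|$. Conversely, any such eigenvector of $T$ or of $T^*$ gives back a state $f$ with $|f(a)|=\|a\|$.

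The forward implications in (i) and (ii) then follow by direct computation with this joint eigenvector. For (i) I will use the symmetry of $\parallel$ and test the pair $(a^*,a^*a)$. We get $f(aa^*a)=\langle TT^*T\xi,\xi\rangle=\lambda\|a\|^3=\lambda\|a^*\|\,\|a^*a\|$, so Lemma~\ref{para cor}(ii) gives $a^*\parallel a^*a$. For (ii) we get $f(a^*a^*)=\overline{\lambda}^2\|a\|^2=\overline{\lambda}^2\|a\|\,\|a^*\|$, so $a\parallel a^*$.

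For the converse of (i), suppose $f(aa^*a)=\lambda\|a\|^3$, that is, $\langle T^*T\xi,T^*\xi\rangle=\lambda\|a\|^3$. Since $\|T^*T\xi\|\le\|a\|^2$ and $\|T^*\xi\|\le\|a\|$, equality must hold in Cauchy--Schwarz. This gives $\|T^*\xi\|=\|a\|$ and $\|T^*T\xi\|=\|a\|^2$, and the latter forces $\|T\xi\|=\|a\|$. Hence $T^*T\xi=\|a\|^2\xi$, and also $T^*T\xi=cT^*\xi$ with $|c|=\|a\|$. Combining the two, $T^*\xi=(\|a\|^2/c)\,\xi$, an eigenvector of $T^*$ with eigenvalue of modulus $\|a\|$. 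Therefore $|f(a)|=|f(a^*)|=\|a\|$, and so $a\parallel\boldsymbol1$.

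For the converse of (ii), suppose $|\langle T^*\xi,T\xi\rangle|=\|a\|^2$. Cauchy--Schwarz again forces $\|T\xi\|=\|T^*\xi\|=\|a\|$ and $T^*\xi=\mu T\xi$ with $|\mu|=1$. Both vectors are then norm-attaining, so $TT^*\xi=\|a\|^2\xi$. Applying $T$ to $T^*\xi=\mu T\xi$ gives $T^2\xi=\overline{\mu}\|a\|^2\xi$. So $\|a\|^2$ is the modulus of an eigenvalue of $\pi(a^2)$. Since $\pi$ is a representation, spectra shrink, and this point lies in $\sigma(a^2)$, giving $r_{\mathfrak A}(a)\ge\|a\|$. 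Hence $a$ is normaloid and $w_{\mathfrak A}(a)=\|a\|$. The supremum defining $w_{\mathfrak A}(a)$ is attained by weak$^*$-compactness of $\mathfrak S(\mathfrak A)$, which produces a state with $|f(a)|=\|a\|$, i.e.\ $a\parallel\boldsymbol1$. I expect this last step to be the main obstacle: passing from an eigenvector of $T^2$ to a state with $|f(a)|=\|a\|$ is the delicate point. I would rely on the fact that normaloid is equivalent to $w_{\mathfrak A}(a)=\|a\|$, as recalled before Corollary~\ref{norm corol}, together with the attainment of the numerical radius.
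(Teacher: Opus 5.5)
Your argument is correct, but it takes a different route from the paper.

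The paper stays at the level of states and norms:
\begin{itemize}
\item For the forward half of (i) it picks a state $f$ attaining $\|(a+\lambda\boldsymbol{1})^*(a+\lambda\boldsymbol{1})\|$. Expanding forces $f(a^*a)=\|a\|^2$ and $f(\lambda a^*)=\|a\|$, and it then verifies the defining identity directly via $\|a^*a+\lambda a^*\|\ge f(a^*a+\lambda a^*)=\|a\|^2+\|a\|$.
\item For the converse it takes a state with $|f(a^*a+\lambda a^*)|=\|a^*a+\lambda a^*\|$ and reads off $|f(a^*)|=\|a\|$.
\item Part (ii) is only asserted to be ``completely analogous''.
\end{itemize}

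You instead pass to the GNS picture once. You show that $a\parallel\boldsymbol{1}$ means exactly that some unit vector is a reducing eigenvector of $\pi(a)$ with eigenvalue of modulus $\|a\|$. All four implications then follow from Lemma~\ref{para cor}(ii) and the equality case of Cauchy--Schwarz.

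What this buys you:
\begin{itemize}
\item Your proof uses exactly what Lemma~\ref{para cor} supplies. For $a^*a\parallel a^*$ the lemma gives a state with $|f(aa^*a)|=\|a\|^3$. It does not give a state attaining $\|a^*a+\lambda a^*\|$ in modulus, which the paper's converse takes for granted.
\item Part (ii) actually gets a proof.
\item The eigenvector picture also explains the later conditions $|f(a^2)|=\|a\|^2$ and $|f(aa^*a)|=\|a\|^3$.
\end{itemize}

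The cost is heavier machinery than needed in a few places.
\begin{itemize}
\item Your converse of (ii) detours through $\sigma_{\mathfrak{A}}(a^2)$, spectral mapping, normaloidity and attainment of $w_{\mathfrak{A}}(a)$. From $T^2\xi=\overline{\mu}\|a\|^2\xi$, the $T$-invariant space $\mathrm{span}\{\xi,T\xi\}$ already contains an eigenvector of $T$ with eigenvalue of modulus $\|a\|$, and its vector state does the job.
\item Simpler still, write $\lambda=\nu^2$. Then $\|a+\lambda a^*\|=2\|\Re(\overline{\nu}a)\|$. A state attaining the norm of the self-adjoint element $\Re(\overline{\nu}a)$ in modulus gives $|f(a)|\ge|\Re f(\overline{\nu}a)|=\|a\|$.
\item The converse of (i) follows at once from $\|a^*a+\lambda a^*\|=\|a^*(a+\lambda\boldsymbol{1})\|\le\|a\|\,\|a+\lambda\boldsymbol{1}\|$.
\end{itemize}

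Finally, state explicitly the standing assumption $a\neq 0$, since you divide by $\|a\|$ and by $c$. Also note that the GNS representation is unital when you say spectra shrink.
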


\begin{proof}
(i) Suppose $a \parallel \boldsymbol{1}$. Then $\|a + \lambda \boldsymbol{1}\| = \|a\| + 1$ for some $\lambda \in \mathbb{T}$. By \cite[Theorem~3.3.6]{murphy2014c}, there exists a state $f \in \mathfrak{S}(\mathfrak{A})$ such that
\[
f\big((a + \lambda \boldsymbol{1})^*(a + \lambda \boldsymbol{1})\big) = \|(a + \lambda \boldsymbol{1})^*(a + \lambda \boldsymbol{1})\| = \|a + \lambda \boldsymbol{1}\|^2 = (\|a\| + 1)^2.
\]
Expanding the left-hand side yields
\begin{align*}
    (\|a\| + 1)^2 &= f(a^*a) + \lambda f(a^*) + \bar{\lambda} f(a) + 1 \\
    &\leq \|a^*a\| + \|\lambda a^*\| + \|\bar{\lambda}a\| + 1 \\
    &= \|a\|^2 + 2\|a\| + 1 = (\|a\| + 1)^2.
\end{align*}
This forces the inequalities to be equalities, from which we get $f(\lambda a^*) = \|a\|$ and $f(a^*a) = \|a^*a\|$. Consequently,
\[
\|a^*a\| + \|a^*\| = f(a^*a) + f(\lambda a^*) = f(a^*a + \lambda a^*) \leq \|a^*a + \lambda a^*\| \leq \|a^*a\| + \|a^*\|.
\]
Thus, $\|a^*a + \lambda a^*\| = \|a^*a\| + \|a^*\|$ for $\lambda \in \mathbb{T}$, meaning $a^*a \parallel a^*$.

Conversely, assume that $a^*a \parallel a^*$, or equivalently, $\|a^*a + \lambda a^*\| = \|a^*a\| + \|a^*\| = \|a\|^2 + \|a\|$ for some $\lambda \in \mathbb{T}$. By Lemma~\ref{para cor}, there exists a state $f \in \mathfrak{S}(\mathfrak{A})$ such that $|f(a^*a + \lambda a^*)| = \|a^*a + \lambda a^*\| = \|a\|^2 + \|a\|$. Therefore, we obtain
\[
\|a\|^2 + \|a\| = |f(a^*a + \lambda a^*)| \leq |f(a^*a)| + |f(a^*)| \leq \|a\|^2 + \|a\|.
\]
This implies $|f(a^*)| = \|a\|$, so there exists $\lambda' \in \mathbb{T}$ such that $f(a^*) = \lambda' \|a\|$, i.e., $f(a) = \overline{\lambda'} \|a\|$. Thus,
\[
\|a\| + 1 = f(\overline{\lambda'}a + \boldsymbol{1}) \leq \|\overline{\lambda'}a + \boldsymbol{1}\| = \|a + \lambda'\boldsymbol{1}\| \leq \|a\| + 1,
\]
which shows $\|a + \lambda'\boldsymbol{1}\| = \|a\| + 1$ for $\lambda' \in \mathbb{T}$. Hence, $a \parallel \boldsymbol{1}$.

(ii) A completely analogous approach establishes $a \parallel \boldsymbol{1} \iff a \parallel a^*$.
\end{proof}

By synthesizing these tools, we easily obtain the following theorem.

\begin{theorem}
Let $a\in\mathfrak{A}$. Then the following statements are equivalent:
\begin{enumerate}
    \item [(i)] $dw_{\mathfrak{A}}(a)=\sqrt{w_{\mathfrak{A}}^2(a)+\|a\|^4}$.
    \item [(ii)] There exists a state $f\in \mathfrak{S}(\mathfrak{A})$ such that $|f(a^2)|=\|a\|^2$.
    \item[(iii)] There exists a state $f\in \mathfrak{S}(\mathfrak{A})$ such that $|f(aa^*a)|=\|a\|^3$.
\end{enumerate}
\end{theorem}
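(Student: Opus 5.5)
We will show that each of (ii) and (iii) is equivalent to (i). The link is Corollary~\ref{norm corol}: (i) holds if and only if $a$ is normaloid, i.e. $w_{\mathfrak{A}}(a)=\|a\|$, equivalently $r_{\mathfrak{A}}(a)=\|a\|$. By Theorem~\ref{norm thm}, (i) is also equivalent to $a\parallel\boldsymbol{1}$.

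\emph{(i) $\iff$ (ii), via spectral radii.} Assume (i), so $r_{\mathfrak{A}}(a)=\|a\|$. The spectral mapping theorem gives $r_{\mathfrak{A}}(a^2)=r_{\mathfrak{A}}(a)^2=\|a\|^2$. Combining this with $r_{\mathfrak{A}}(a^2)\le\|a^2\|\le\|a\|^2$ yields $r_{\mathfrak{A}}(a^2)=\|a^2\|=\|a\|^2$. Thus $a^2$ is normaloid, so $w_{\mathfrak{A}}(a^2)=\|a\|^2$. The supremum defining $w_{\mathfrak{A}}(a^2)$ is attained, because $f\mapsto|f(a^2)|$ is weak$^*$-continuous on the weak$^*$-compact set $\mathfrak{S}(\mathfrak{A})$. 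This produces a state $f$ with $|f(a^2)|=\|a\|^2$.

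Conversely, suppose $|f(a^2)|=\|a\|^2$ for some state $f$. Then
\[
\|a\|^2\le w_{\mathfrak{A}}(a^2)\le\|a^2\|\le\|a\|^2,
\]
so $w_{\mathfrak{A}}(a^2)=\|a^2\|$ and $a^2$ is normaloid. Hence $r_{\mathfrak{A}}(a)^2=r_{\mathfrak{A}}(a^2)=\|a^2\|=\|a\|^2$, so $a$ is normaloid, and (i) follows from Corollary~\ref{norm corol}.

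\emph{(i) $\iff$ (iii), via Lemma~\ref{norm lem}(i).} By Theorem~\ref{norm thm}, (i) is equivalent to $a\parallel\boldsymbol{1}$, and by Lemma~\ref{norm lem}(i) this is equivalent to $a^*a\parallel a^*$. We now apply Lemma~\ref{para cor}(ii) to the pair $(a^*a,a^*)$. It says $a^*a\parallel a^*$ holds if and only if there exist $f\in\mathfrak{S}(\mathfrak{A})$ and $\lambda\in\mathbb{T}$ with
\[
f\big((a^*a)^*a^*\big)=\lambda\|a^*a\|\|a^*\|=\lambda\|a\|^3 .
\]
Here $(a^*a)^*a^*=a^*aa^*=(aa^*a)^*$. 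Since states satisfy $f(x^*)=\overline{f(x)}$, this condition reads $|f(aa^*a)|=\|a\|^3$, which is exactly (iii).

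The only step needing care is the passage from "$a^2$ is normaloid" back to "$a$ is normaloid" in (ii) $\Rightarrow$ (i). This is where the spectral-radius formulation of normaloidness, rather than the numerical-radius one, is essential, since $r_{\mathfrak{A}}(a^2)=r_{\mathfrak{A}}(a)^2$ has no analogue for $w_{\mathfrak{A}}$. The remaining steps are direct applications of Corollary~\ref{norm corol}, Theorem~\ref{norm thm}, Lemma~\ref{norm lem}(i) and Lemma~\ref{para cor}.
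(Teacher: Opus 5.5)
Your proposal is correct. For (i)$\iff$(iii) it is the paper's argument: Theorem~\ref{norm thm} and Lemma~\ref{norm lem}(i) turn (i) into $a^*a\parallel a^*$, and Lemma~\ref{para cor} applied to $(a^*a,a^*)$, with $(a^*aa^*)^*=aa^*a$, gives (iii).

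For (i)$\iff$(ii) you take a different route. The paper stays with states and norm-parallelism. It uses Lemma~\ref{norm lem}(ii), $a\parallel\boldsymbol{1}\iff a\parallel a^*$, and then Lemma~\ref{para cor} for the pair $(a,a^*)$, noting $g(a^*a^*)=\overline{g(a^2)}$. This treats (ii) and (iii) uniformly, but the proof of Lemma~\ref{norm lem}(ii) is only described as analogous to part (i).

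You instead go through normaloidness (Corollary~\ref{norm corol}) and the spectral mapping identity $r_{\mathfrak{A}}(a^2)=r_{\mathfrak{A}}(a)^2$. This is squeezed by $r_{\mathfrak{A}}\le w_{\mathfrak{A}}\le\|\cdot\|$ and $\|a^2\|\le\|a\|^2$, and weak$^*$-compactness of $\mathfrak{S}(\mathfrak{A})$ gives attainment. Your version makes clear what (ii) says: $a^2$ is normaloid with $\|a^2\|=\|a\|^2$. Since Lemma~\ref{para cor} identifies (ii) with $a\parallel a^*$, it also gives an independent proof of Lemma~\ref{norm lem}(ii).

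The cost is that you rely on the nontrivial implication $w_{\mathfrak{A}}(x)=\|x\|\Rightarrow r_{\mathfrak{A}}(x)=\|x\|$. You use it for $x=a$, hidden inside Corollary~\ref{norm corol}, and for $x=a^2$ in (ii)$\Rightarrow$(i). The paper quotes it as well known. In both places you can make it explicit. If $|f(x)|=\|x\|$ for a state $f$, equality in Cauchy--Schwarz for the GNS vector $\xi$ gives $\pi(x)\xi=\mu\xi$ with $|\mu|=\|x\|$, so $r_{\mathfrak{A}}(x)\ge\|x\|$.
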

\begin{proof}
(i) $\iff$ (ii): By Theorem~\ref{norm thm} and Lemma~\ref{norm lem}, condition (i) is equivalent to $a \parallel a^*$. Applying the Lemma~\ref{para cor} to the pair $(a, a^*)$, we find that $a \parallel a^*$ if and only if there exist a state $g \in \mathfrak{S}(\mathfrak{A})$ and a scalar $\lambda \in \mathbb{T}$ such that $g(a^* a^*) = \lambda \|a\| \|a^*\| = \lambda \|a\|^2$. Noting that $g(a^* a^*) = g((a^2)^*)$, we take the complex conjugate to find $g(a^2) = \overline{\lambda} \|a\|^2$. Setting $f = g$ and taking the absolute value yields $|f(a^2)| = \|a\|^2$.

(i) $\iff$ (iii): Similarly, Theorem~\ref{norm thm} and Lemma~\ref{norm lem} establish that condition (i) holds if and only if $a^*a \parallel a^*$. Applying the Lemma~\ref{para cor} to the elements $a^*a$ and $a^*$, we deduce that $a^*a \parallel a^*$ if and only if there exists a state $g \in \mathfrak{S}(\mathfrak{A})$ and $\lambda \in \mathbb{T}$ such that $g((a^*a)^* a^*) = \lambda \|a^*a\| \|a^*\|$. Since $(a^*a)^* = a^*a$, this simplifies to $g(a^*a a^*) = \lambda \|a\|^3$. Taking the complex conjugate and using the state properties, we obtain a state $f \in \mathfrak{S}(\mathfrak{A})$ such that $|f(aa^*a)| = \|a\|^3$, completes the proof.
\end{proof}

Our final theorem provides a necessary and sufficient condition for the extreme scaling condition $dw_{\mathfrak{A}}(a) = \sqrt{2} w_{\mathfrak{A}}(a)$ under the assumption of parallelism to identity.

\begin{theorem}
Let $a \in \mathfrak{A}$ be such that $a \parallel \boldsymbol{1}$. Then the following conditions are equivalent:
\begin{enumerate}
    \item[(i)] $\|a\| = 1$.
    \item[(ii)] $dw_{\mathfrak{A}}(a) = \sqrt{2} w_{\mathfrak{A}}(a)$.
\end{enumerate}
\end{theorem}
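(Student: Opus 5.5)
Since $a\parallel\boldsymbol{1}$, I will first reduce both sides to explicit formulas in $\|a\|$. Theorem~\ref{norm thm} gives $dw_{\mathfrak{A}}(a)=\sqrt{w_{\mathfrak{A}}^2(a)+\|a\|^4}$. Its proof, or Corollary~\ref{norm corol}, shows that $a$ is normaloid, so $w_{\mathfrak{A}}(a)=\|a\|$. Concretely, Lemma~\ref{para cor} provides a state $f$ with $|f(a)|=\|a\|$, so $w_{\mathfrak{A}}(a)\ge\|a\|$, and \eqref{*norm} gives the reverse inequality. Hence
\[
dw_{\mathfrak{A}}(a)=\|a\|\sqrt{1+\|a\|^2},\qquad \sqrt{2}\,w_{\mathfrak{A}}(a)=\sqrt{2}\,\|a\|.
\]

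For (i) $\Rightarrow$ (ii), substitute $\|a\|=1$ into these formulas. Then $dw_{\mathfrak{A}}(a)=\sqrt{2}=\sqrt{2}\,w_{\mathfrak{A}}(a)$.

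For (ii) $\Rightarrow$ (i), the equality becomes $\|a\|\sqrt{1+\|a\|^2}=\sqrt{2}\,\|a\|$. We may divide by $\|a\|$, since the section standing assumption is that $a\neq 0$; without it, $a=0$ would satisfy (ii) trivially and the equivalence would fail. This gives $1+\|a\|^2=2$, and so $\|a\|=1$.

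No step should be a real obstacle, since everything follows from the explicit formula. The only points to handle carefully are citing the normaloid conclusion $w_{\mathfrak{A}}(a)=\|a\|$ correctly from Theorem~\ref{norm thm} and Lemma~\ref{para cor}, and using the nonzero assumption on $a$ when dividing.
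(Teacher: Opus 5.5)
Your proposal is correct and takes the same route as the paper. Both use Theorem~\ref{norm thm} to get $dw_{\mathfrak{A}}(a)=\sqrt{w_{\mathfrak{A}}^2(a)+\|a\|^4}$ and the normaloid conclusion $w_{\mathfrak{A}}(a)=\|a\|$, then reduce (ii) to an equation in $\|a\|$ that is solved using the standing assumption $a\neq 0$; the paper squares first to get $\|a\|^2=w_{\mathfrak{A}}(a)$ before substituting, which is only a reordering. Your direct derivation of $w_{\mathfrak{A}}(a)=\|a\|$ from Lemma~\ref{para cor} with $b=\boldsymbol{1}$ is correct, and so is your remark that $a=0$ would break the equivalence without the nonzero hypothesis.
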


\begin{proof}
Let $a \in \mathfrak{A}$ be such that $a \parallel \boldsymbol{1}$. 

(i) $\implies$ (ii): Let $\|a\| = 1$. Then by Theorem~\ref{norm thm} and Corollary~\ref{norm corol}, we have $w_{\mathfrak{A}}(a) = \|a\| = 1$. Thus,
\[
dw_{\mathfrak{A}}(a) = \sqrt{w_{\mathfrak{A}}^2(a) + \|a\|^4} = \sqrt{1 + 1} = \sqrt{2} = \sqrt{2} w_{\mathfrak{A}}(a).
\]

(ii) $\implies$ (i): Assume $dw_{\mathfrak{A}}(a) = \sqrt{2} w_{\mathfrak{A}}(a)$. Since $a \parallel \boldsymbol{1}$, Theorem~\ref{norm thm} guarantees $dw_{\mathfrak{A}}(a) = \sqrt{w_{\mathfrak{A}}^2(a) + \|a\|^4}$. Squaring both sides yields
\begin{align*}
    w_{\mathfrak{A}}^2(a) + \|a\|^4 = 2 w_{\mathfrak{A}}^2(a) &\implies \|a\|^4 = w_{\mathfrak{A}}^2(a) \\
    &\implies \|a\|^2 = w_{\mathfrak{A}}(a).
\end{align*}
By Corollary~\ref{norm corol}, $w_{\mathfrak{A}}(a) = \|a\|$ since $a \parallel \boldsymbol{1}$. Substituting this into the relation gives $\|a\|^2 = \|a\|$. Since $a$ is a non-zero element, we conclude that $\|a\| = 1$.
\end{proof}

\section{Conclusion}\label{sec5}

In conclusion, this paper establishes a rigorous state-space framework for the algebraic Davis--Wielandt shell and radius in unital $C^*$-algebras, successfully bypassing classical spatial non-convexity irregularities. By characterizing the shell's boundary geometry, deriving sharp metric inequalities for sums of elements, and clarifying the structural interplay with norm-parallelism, these results significantly advance numerical radius theory.

\section*{Declarations}
\begin{itemize}
    \item \textit{Author contributions:} The authors contributed equally to this work and approved the final manuscript.
    \item \textit{Funding:} This research did not receive external funding.
    \item \textit{Data availability:} No data were used to support this study.
    \item \textit{Conflict of interest:} The authors declare that they have no conflict of interest.
\end{itemize}

\end{document}